\newtheorem{theorem}{Theorem}[section]
\newtheorem{lemma}[theorem]{Lemma}
\newtheorem{proposition}[theorem]{Proposition}
\newtheorem{corollary}[theorem]{Corollary}
\theoremstyle{definition}
\newcommand{\op}[1]{\textrm{\upshape #1}}
\newcommand{\join}{\vee}
\newcommand{\back}{\backslash}
\newcommand{\meet}{\wedge}
\newcommand{\la}{\langle}
\newcommand{\ra}{\rangle}
\newcommand{\alg}[1]{{\textbf{\upshape #1}}}  %
\newcommand{\vv}[1]{\mathsf {#1}}
\renewcommand{\a}{\alpha}
\renewcommand{\d}{\delta}
\newcommand{\f}{\varphi}
\newcommand{\g}{\gamma}
\newcommand{\e}{\varepsilon}
\renewcommand{\th}{\theta}
\renewcommand{\o}{\omega}
\newcommand{\sse}{\subseteq}
\newcommand{\app}{\approx}
\newcommand{\HH}{{\mathbf H}}  
\newcommand{\II}{{\mathbf I}} 
\newcommand{\SU}{{\mathbf S}} 
\newcommand{\PP}{{\mathbf P}}   
\newcommand{\VV}{{\mathbf V}}   
\newcommand{\two}{\mathbf 2}
\newcommand{\itemb}{\item[$\bullet$]}
\newcommand{\OO}{{\mathbf O}}
\newcommand{\BL}{\vv B\vv L}
\newcommand{\BH}{\vv B\vv H}
\newcommand{\WH}{\vv W\vv H}
\newcommand{\ib}{\item[$\bullet$]}
\newcommand{\Con}[1]{\operatorname{Con}(\alg #1)}
\newcommand{\vuc}[2]{#1_1,\dots,#1_{#2}}
\newcommand{\imp}{\rightarrow}
\def\square{\RIfM@\bgroup\else$\bgroup\aftergroup$\fi
  \vcenter{\hrule\hbox{\vrule\@height.6em\kern.6em\vrule}\hrule}\egroup}
\newcommand{\smlcirc}{\raise3pt\hbox{\textrm{\circle{3.3}}}}
\newcommand{\myfrac}[2]{\dfrac{#1}{\lower.5ex\hbox{$#2$}}}
\mathchardef\hu="0362
\renewcommand{\e}{\varepsilon}
\newcommand{\lr}{ {\slash}}
\newcommand{\rr}{ {\backslash}}
\begin{document}

\title{Strictly join irreducible varieties of residuated lattices}
\author{Paolo Aglian\`{o}\\
DIISM,\\
Universit\`a di Siena, Italy\\
agliano@live.com
\and
 Sara Ugolini\\
 Artificial Intelligence Research Institute,\\
 Spanish National Research Council, Barcelona, Spain\\
 sara@iiia.csic.es}
\date{}
\maketitle
\abstract{We study (strictly) join irreducible varieties in the lattice of subvarieties of residuated lattices. We explore the connections with well-connected algebras and suitable generalizations, focusing in particular on representable varieties. Moreover we find weakened notions of Halld\'en completeness that characterize join irreducibility. We characterize strictly join irreducible varieties of basic hoops, and use the generalized rotation construction to find strictly join irreducible varieties in subvarieties of $\mathsf{MTL}$-algebras. We also obtain some general results about linear varieties of residuated lattices, with a particular focus on representable varieties, and a characterization for linear varieties of basic hoops. }

\section{Introduction}
Substructural logics constitute a large class of logical systems algebraizable in the sense of Blok-Pigozzi, where the semantical characterization of provability of the Lindenbaum-Tarski algebraization extends to a characterization of logical deducibility via the algebraic equational consequence (see \cite{GJKO} for a detailed investigation). Substructural logics encompass classical logic, intuitionistic logic, fuzzy logics, relevance logics and many other systems, all seen as logical extensions of the Full Lambek calculus $\mathcal{FL}$. As a consequence of algebraizability, all extensions of $\mathcal{FL}$ are also algebraizable,  and the lattice of axiomatic extensions is dually isomorphic to the subvariety lattice of the algebraic semantics, given by the variety of $\mathsf{FL}$-algebras. In this work we are interested in the positive fragment of $\mathcal{FL}$ (the system obtained by removing the constant $0$, and consequently negation, from the language), $\mathcal{FL}^{+}$, whose corresponding algebraic semantics is given by the variety of residuated lattices $\mathsf{RL}$.

Our investigation will be carried on in the algebraic framework, and goes in the direction of gaining a better understanding of the lattice of subvarieties of residuated lattices (thus, equivalently, the lattice of axiomatic extensions of the corresponding logics). In particular we study properties, and in some relevant cases we find characterizations, of those varieties that in the lattice of subvarieties are join irreducible or strictly join irreducible.
Kihara and Ono showed that in the presence of integrality and commutativity join irreducibility of a variety is characterized by both a logical property, Halld\'en completeness, and by an algebraic property of the generating algebras, well-connectedness (\cite{KiharaOno2008}, anticipated in \cite{GJKO}).

We show how both these notions can be generalized for non-integral, non-commutative subvarieties of $\mathsf{RL}$, characterizing join irreducibility in a large class of residuated lattices, that include for instance all normal varieties, representable varieties, and $\ell$-groups. Moreover, we answer two questions left open in \cite{KiharaOno2008} in our more general setting. In particular, we show that in our more general framework being join irreducible is equivalent to being generated by a subdirectly irreducible algebra, and that in general not all subdirectly irreducible algebras generate strictly join irreducible varieties.
A key role is played by results implicit in \cite{Galatos2003} concerning the axiomatization of the join of varieties of residuated lattices.

We then focus on representable varieties, where well-connectedness is shown to correspond to being totally ordered. In the further presence of divisibility, we use the ordinal sum construction to find sufficient conditions for strict join irreducibility. Then we focus on the variety of basic hoops, where a characterization of strictly join irreducible varieties is given. The representation shows to be analogous to the one found in \cite{AguzzoliBianchi2020} for $\mathsf{BL}$-algebras (of which basic hoops are $0$-free subreducts) and is constructed based off the representation of totally ordered hoops with finite index given in \cite{AglianoMontagna2017}. Given the results in \cite{AglianoUgolini2019a}, using the generalized rotation construction in \cite{BMU2018}, we are able to lift the results found about the lattice of subvarieties of basic hoops to particular intervals of the lattice of subvarieties of $\mathsf{MTL}$-algebras (representable, commutative, integral $\mathsf{FL}$-algebras). In the end we focus on particular join irreducible varieties, namely linear varieties, finding some general characterizations in the presence of representability and a characterization for linear varieties of basic hoops, that again we lift to particular subvarieties of $\mathsf{MTL}$.

\section{Preliminaries}

A {\bf residuated lattice} is an algebra  $\alg A = \la A,\join,\meet,\cdot,\lr,\rr, 1\ra$ where
\begin{enumerate}
\item $\la A, \join, \meet\ra $ is a lattice;
\item $\la A, \cdot,1\ra$ is a monoid;
\item $\lr$ and $\rr$ are the right and left divisions w.r.t. $\cdot$, i.e. $x \cdot y \leq z$ iff $y \leq x \rr z$ iff $x \leq z \lr y$.
\end{enumerate}
Residuated lattices form a variety $\mathsf{RL}$ and an axiomatization, together with the many equations holding in these very rich structures, can be found in \cite{BlountTsinakis2003}.

A residuated lattice $\alg A$ is {\bf integral} if it satisfies the equation $x \le 1$; it  is {\bf commutative} if $\cdot$ is commutative;
it is {\bf divisible} if  the ordering of $\alg A$ is the {\em inverse divisibility ordering} i.e. for $a,b \in A$
$$
a\le b \qquad\text{if and only if}\qquad\text{there are $c,d \in A$ with $a=cb=bd$.}
$$
The first two properties are clearly equational, while the third is equivalent to the equations (\cite{JipsenTsinakis2002})
$$
(x\lr y)y \app x \meet y \app y(y\rr x).
$$
Finally a residuated lattice is {\bf cancellative} if the underlying monoid is cancellative in the usual sense; it turns out that this property is equational as well \cite{BahlsColeGalatos2003}.
A residuated lattice is cancellative if and only if it satisfies
$$
xy \lr y \app x \qquad  x \rr xy \app y.
$$
So the classes of residuated lattices that satisfy any combination of integrality, commutativity, divisibility  or cancellativity are subvarieties of $\mathsf{RL}$. We shall call the variety of integral residuated lattices $\mathsf{IRL}$, commutative residuated lattices $\mathsf{CRL}$, and their intersection $\mathsf{CIRL}$.

Residuated lattices with an extra constant $0$ in the language are called $\mathsf{FL}$-algebras, since they are the equivalent algebraic semantics of the Full Lambek calculus $\mathcal{FL}$ (for precise definition of this calculus, or more details on algebraizability, see \cite{GJKO}). Residuated lattices are then the equivalent algebraic semantics of the positive (i.e., $0$-free) fragment of $\mathcal{FL}$, $\mathcal{FL}^{+}$.

In particular, for every variety of residuated lattices $\vv V$ over a language $\mathscr{L}$, its corresponding logic is $\mathcal L_\vv V = \{ \varphi \in Fm_{\mathscr{L}}: \vv V \models \varphi \geq 1\}$, where $Fm_{\mathscr{L}}$ is the set of formulas over $\mathscr{L}$. Conversely, given any extension $\mathcal{L}$ of $\mathcal{FL}^{+}$ axiomatized by the set of formulas $\Phi$, its equivalent algebraic semantics is the subvariety $\vv V_\mathcal L$ of $\mathsf{FL}$ axiomatized by the set of equations $\{\varphi \geq 1 : \varphi \in \Phi\}$.
Moreover, the following is shown in \cite{GalatosOno06}.
\begin{theorem}
The maps $\mathbf{L}: \vv V \mapsto \mathcal L_\vv V$ and $\mathbf{V}: \mathcal{L} \mapsto \vv V_{\mathcal{L}}$ are mutually inverse, dual lattice isomorphisms between the lattice of subvarieties of $\mathsf{RL}$ and the lattice of logical extensions of $\mathcal{FL}^{+}$.
\end{theorem}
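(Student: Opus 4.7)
The plan is to invoke the Blok--Pigozzi algebraization of $\mathcal{FL}^+$ by $\mathsf{RL}$ and then apply the general fact that algebraizability induces a dual isomorphism between the subvariety lattice of the equivalent algebraic semantics and the lattice of axiomatic extensions of the logic. The algebraizability, together with the explicit translations, is established in \cite{GJKO}: equations $s \app t$ are translated to formulas via the ``biconditional'' $(s \lr t) \meet (t \lr s) \meet (s \rr t) \meet (t \rr s)$, and formulas $\f$ are translated to equations via $\f \geq 1$, or equivalently $\f \meet 1 \app 1$.

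To make the argument self-contained I would proceed in four steps. First, check that $\mathbf{L}$ and $\mathbf{V}$ are well-defined: for any subvariety $\vv V$ of $\mathsf{RL}$, the set $\mathcal L_\vv V$ is closed under substitution (since equational consequence is) and under modus ponens (since in any residuated lattice, $a \geq 1$ and $a \rr b \geq 1$ force $b \geq 1$), and contains all theorems of $\mathcal{FL}^+$; conversely, $\vv V_\mathcal L$ is equationally definable and hence a subvariety of $\mathsf{RL}$. Second, verify order-reversal: $\vv V_1 \sse \vv V_2$ implies $\mathcal L_{\vv V_2} \sse \mathcal L_{\vv V_1}$ because fewer algebras validate more equations, and the analogous implication for $\mathbf V$ is immediate.

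The heart of the matter is mutual inversion. For $\mathbf V(\mathbf L(\vv V)) = \vv V$, the $\supseteq$ inclusion is clear from the definitions, and $\sse$ follows by noting that any equation $s \app t$ valid in $\vv V$ is equivalent, over $\mathsf{RL}$, to the single equation $\f \geq 1$ where $\f$ is the biconditional above, so $\vv V$ and $\mathbf V(\mathbf L(\vv V))$ are axiomatized over $\mathsf{RL}$ by exactly the same equations. For $\mathbf L(\mathbf V(\mathcal L)) = \mathcal L$, the $\supseteq$ inclusion is straightforward (everything in $\mathcal L$ is valid in its algebraic semantics), and for $\sse$ one invokes the Lindenbaum--Tarski algebra of $\mathcal L$: it belongs to $\vv V_\mathcal L$ by construction, and a formula $\f$ with $\vv V_\mathcal L \models \f \geq 1$ must in particular evaluate to an element $\geq 1$ in this algebra under the identity assignment, which forces $\f \in \mathcal L$.

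The main obstacle, in a from-scratch argument, is the Lindenbaum--Tarski step in the last clause: one needs to verify that the quotient of the term algebra by provable interderivability is actually a residuated lattice belonging to $\vv V_\mathcal L$, which requires checking that all the structural and residuation rules are preserved by the congruence generated by $\mathcal L$. This is routine but somewhat lengthy, and is precisely what the algebraizability of $\mathcal{FL}^+$ in the Blok--Pigozzi sense encapsulates. Since this construction is carried out in detail in \cite{GJKO}, the cleanest approach in the present paper is simply to cite that reference for the algebraizability of $\mathcal{FL}^+$ and deduce the theorem as a special case of the Blok--Pigozzi isomorphism between the lattice of extensions of an algebraizable logic and the subvariety lattice of its equivalent quasivariety (here already a variety).
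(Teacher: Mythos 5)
Your proposal is correct and in the end coincides with what the paper does: the theorem is not proved there but simply quoted from the literature (the paper cites Galatos--Ono 2006; the same material is in \cite{GJKO}), and your sketch is exactly the standard Blok--Pigozzi algebraizability argument, via the translations $\f \mapsto \f \meet 1 \app 1$ and $s \app t \mapsto$ the biconditional formula, that underlies that citation. The only caveat, subsumed anyway by the reference you invoke, is that well-definedness of $\mathbf L$ requires closure of $\mathcal L_{\vv V}$ under all the rules of $\mathcal{FL}^{+}$ (e.g.\ adjunction and, in the noncommutative setting, the conjugation rules), not only modus ponens.
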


It is useful to observe that a conjunction of a finite number of equations is equivalent in $\mathsf{RL}$ to a single inequality of the kind $p \geq 1$, or equivalently to an equation of the kind $p\land 1 \approx 1$ (Lemma 3.1 in \cite{Galatos2004}).

With respect to their structure theory, residuated lattices are congruence permutable and congruence point-regular at 1, and also congruence distributive (since they have lattice terms). It follows that $\mathsf{FL}$-algebras and residuated lattices are {\em ideal determined} \cite{GummUrsini1984} and have a good theory of ideals in the sense of \cite{AglianoUrsini1992}. The role of the ideals is played by the {\bf congruence filters}; we call a {\bf filter} $F$ of $\alg A$ a subset of $A$ that is a lattice filter containing $1$ and is closed under multiplication.
Both filters an congruence filters form algebraic lattices; if $A^+ = \{a: a \ge 1\}$, then for any $\th \in \Con A$,
$$
A^+/\th = \bigcup\{a/\th: a \ge 1\}
$$
is a congruence filter. Moreover the two mappings
$$
\th \longmapsto A^+/\th \qquad\qquad F \longmapsto \th_F = \{(a,b): a\lr b, b\lr a \in F\}
$$
are mutually inverse order preserving maps from  $\Con A$ to the congruence filter lattice of $\alg A$, that are therefore isomorphic. Notice that $A^+/\th$ defines the same set as $\uparrow\![1]_{\theta}$, used for example in \cite{GJKO}.

A residuated lattice is {\bf normal} if every filter is a congruence filter and a variety of residuated lattices is {\bf normal} if each of its members is normal.
To determine which varieties are normal a description of the congruence filter generated by a subset is in order. Following \cite{JipsenTsinakis2002} we define $l_a(x) = a \rr xa \meet 1$ and $r_a(x) = ax\lr a \meet 1$ and we call them the {\bf left and right conjugates} of $x$ with respect to $a$.

\begin{lemma}\label{filcong} \cite{JipsenTsinakis2002} Let $\alg A$ be a residuated lattice; then a filter $F$ of $\alg A$ is a congruence filter if and only if for all $a \in F$ and $b \in A$,
$l_b(a), r_b(a) \in F$.
\end{lemma}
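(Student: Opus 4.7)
The plan is to prove both directions using the correspondence $\th \leftrightarrow A^+/\th$ from the excerpt. For the forward direction, I would assume $F = A^+/\th$ for some $\th \in \Con A$ and pick $a \in F$, $b \in A$. From $a \in F$ one obtains $c \ge 1$ with $a \th c$. Since $c \ge 1$ gives $cb \ge b$, monotonicity of the residuals yields $b \rr cb \ge b \rr b \ge 1$, so $l_b(c) = (b \rr cb) \meet 1 = 1$. Applying $\th$ termwise, $l_b(a) \th l_b(c) = 1$, hence $l_b(a) \in A^+/\th = F$; the argument for $r_b(a)$ is symmetric.

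For the converse, suppose $F$ is a filter closed under $l_b$ and $r_b$ for every $b \in A$. I would introduce $\th_F = \{(a,b) : a \lr b,\ b \lr a \in F\}$ and show that $\th_F \in \Con A$ with $A^+/\th_F = F$; by the bijection in the excerpt this identifies $F$ as a congruence filter. Reflexivity of $\th_F$ follows from $a \lr a \ge 1$ and upward closure, symmetry is immediate, and transitivity uses the inequality $(a \lr b)(b \lr c) \le a \lr c$ together with closure under multiplication. Lattice compatibility comes from standard inequalities such as $(a \lr b) \meet (c \lr d) \le (a \join c) \lr (b \join d)$ and its $\meet$-analogue.

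The substantive step, and the only place the conjugate-closure hypothesis enters, is compatibility of $\th_F$ with $\cdot$ and the residuals. By transitivity it suffices to verify (i) $(a,b) \in \th_F \Rightarrow (ac, bc) \in \th_F$, which is immediate from $a \lr b \le ac \lr bc$, and (ii) $(c,d) \in \th_F \Rightarrow (bc, bd) \in \th_F$, where monotonicity alone fails because $b$ sits on the wrong side. The conjugate resolves (ii): from the chain
\[
r_b(c \lr d) \cdot bd \;\le\; b(c \lr d) \cdot d \;\le\; bc
\]
one reads off $r_b(c \lr d) \le bc \lr bd$, and $r_b(c \lr d) \in F$ by hypothesis, so $bc \lr bd \in F$. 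Analogous conjugate manipulations would handle the other direction of multiplication and the cases for the two residuals.

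Finally, to check $A^+/\th_F = F$: given $c \in F$ I would verify $c \th_F (c \join 1)$, using $(c \join 1) \lr c \ge 1 \in F$ and $c \lr (c \join 1) = (c \lr c) \meet c \in F$ (meet-closure), so $c \in A^+/\th_F$. Conversely, from $c \th_F c'$ with $c' \ge 1$ one has $c' \in F$ (upward closure) and $(c \lr c') c' \in F$ (product-closure); since $(c \lr c') c' \le c$, upward closure forces $c \in F$. The main obstacle throughout is step (ii) above, which is precisely where the conjugate-closure condition is needed.
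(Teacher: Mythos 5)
Your proof is correct. The paper does not prove this lemma itself (it is quoted from the cited Jipsen--Tsinakis survey), and your argument is essentially the standard one behind that citation: the forward direction via $F=A^+/\th$ and $l_b(c)=r_b(c)=1$ for $c\ge 1$, and the converse by checking that $\th_F$ is a congruence with $A^+/\th_F=F$, where closure under conjugates is exactly what makes $\th_F$ compatible with multiplication on the left. The steps you leave as ``analogous'' (compatibility with the two residuals in the argument where monotonicity points the wrong way) do go through by the same conjugation trick; for instance, from $r_{c\lr b}(b\lr a)\cdot(c\lr b)\cdot a \le (c\lr b)(b\lr a)a \le (c\lr b)b \le c$ one reads off $r_{c\lr b}(b\lr a)\le (c\lr a)\lr(c\lr b)$, and the hypothesis puts $r_{c\lr b}(b\lr a)$ in $F$, so upward closure finishes that case just as in your step (ii).
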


This lemma allows a handy description of the congruence filter generated by a subset $X$ of a residuated lattice $\alg A$; an {\bf iterated conjugate} in $\alg A$ is a unary term $\g_{a_1}(\g_{a_2}(\dots \g_{a_n}(x)))$ where
$\vuc an \in A$ and $\g_{a_i} \in \{l_{a_i},r_{a_i}\}$ for $i = 1 \ldots n, n \in \mathbb{N}$.
We will denote with $\Gamma^n(\alg A)$ the set of iterated conjugates in $\alg A$ of \emph{length n} (i.e. a composition of $n$ left and right conjugates).

\begin{corollary}\label{prinfil} Let $\alg A$ be a residuated lattice and let $X \sse A$; then the congruence filter generated by $X$ in $\alg A$ is the set
\begin{equation*}
\begin{aligned}
{\bf F}(X) = \{b \in A :\; &\g_1(a_1)\dots\g_n(a_n) \le b, \;n \in \mathbb N,\  \vuc an \in X\cup\{1\}, \\ &\g_i \in \Gamma^{k}(\alg A) \mbox{ for some } k \in \mathbb{N}, i = 1 \ldots n\}.
\end{aligned}
\end{equation*}
\end{corollary}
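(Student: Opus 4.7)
Let $G$ denote the set on the right-hand side of the claimed equality. I plan to show both inclusions $G \sse \mathbf{F}(X)$ and $\mathbf{F}(X) \sse G$.

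For $G \sse \mathbf{F}(X)$, let $F$ be any congruence filter of $\alg A$ containing $X$. By Lemma \ref{filcong}, $F$ is closed under $l_b$ and $r_b$ for every $b \in A$, so a straightforward induction on length gives $\gamma(a) \in F$ for every $a \in X \cup \{1\}$ and every iterated conjugate $\gamma$. Since $F$ is also closed under products and upward closed, $G \sse F$; intersecting over all such $F$ yields $G \sse \mathbf{F}(X)$.

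For the reverse inclusion, I verify that $G$ is itself a congruence filter containing $X$. Each $a \in X \cup \{1\}$ lies in $G$ via the length-$1$ witness $l_1(a) = (1 \rr a) \wedge 1 = a \wedge 1 \leq a$, and upward closure is built into the definition of $G$. For meet- and product-closure, the crucial observation is that every iterated conjugate is $\leq 1$ (because of the $\wedge 1$ in the definitions of $l_b, r_b$), so every product witness $P$ satisfies $P \leq 1$; given $P_1 \leq b_1$ and $P_2 \leq b_2$, the concatenation $P_1 P_2$ is a valid witness yielding $P_1 P_2 \leq P_1 \wedge P_2 \leq b_1 \wedge b_2$ (from $P_1, P_2 \leq 1$ and monotonicity of $\cdot$) and $P_1 P_2 \leq b_1 b_2$, so both $b_1 \wedge b_2$ and $b_1 b_2$ belong to $G$.

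The main technical step, and essentially the only nontrivial one, is closure of $G$ under left and right conjugation, which together with Lemma \ref{filcong} finishes the proof. The key identity is
\[
l_a(xy) \geq l_a(x) \cdot l_a(y),
\]
with the analogous $r_a(xy) \geq r_a(x) \cdot r_a(y)$: from $a \cdot l_a(x) \leq xa$ and $a \cdot l_a(y) \leq ya$ (both by residuation) one has
\[
a \cdot l_a(x) \cdot l_a(y) \leq xa \cdot l_a(y) = x \cdot a \cdot l_a(y) \leq x \cdot ya = xya,
\]
so $l_a(x) \cdot l_a(y) \leq a \rr xya$; being also $\leq 1$, it is $\leq l_a(xy)$. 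Iterating on a product $P = \gamma_1(a_1) \cdots \gamma_n(a_n) \leq b$ gives $l_a(P) \geq l_a(\gamma_1(a_1)) \cdots l_a(\gamma_n(a_n))$, where each $l_a \circ \gamma_i$ is an iterated conjugate of $a_i$ of length one more than $\gamma_i$; monotonicity of $l_a$ then delivers $l_a(b) \geq l_a(P)$, so $l_a(b) \in G$, and the same argument covers $r_a(b)$. Once this weak-distributivity identity is secured, the remainder is routine bookkeeping, and I expect it to be the only obstacle of substance in the whole argument.
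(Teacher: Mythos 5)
Your proof is correct and is essentially the standard argument that the paper leaves implicit (the corollary is stated there without proof, as a direct consequence of Lemma \ref{filcong}): you show the displayed set is itself a congruence filter containing $X$ and is contained in every such filter, the only substantive step being the inequality $l_a(x)\,l_a(y)\le l_a(xy)$ and its right-handed analogue, which you verify correctly via residuation. The one bookkeeping point worth making explicit is that, to meet the statement's requirement that the conjugates in a single witness lie in a common $\Gamma^k$, shorter conjugates can be padded by composing with $l_1$, which leaves their values unchanged since every conjugate is $\le 1$.
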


 Section 4 of \cite{GalatosOlsonRaftery2008} contains an implicit characterization of  normal residuated semilattices. Let's make it explicit for residuated lattices.

\begin{lemma}\label{normalS} For a residuated lattice $\alg A$ the following are equivalent:
\begin{enumerate}
\item $\alg A$ is normal;
\item every principal filter of $\alg A$ is a congruence filter;
\item for all $a\in A$, given any $b \in A$ there exist  $n,m \in \mathbb N$ (possibly depending on $b$) such that $(a\meet 1)^nb \le ba$ and $b(a \meet 1)^m \le ab$.
\end{enumerate}
\end{lemma}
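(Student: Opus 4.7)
The plan is to establish the cycle (1)$\Rightarrow$(2)$\Rightarrow$(3)$\Rightarrow$(1). The first arrow is immediate because principal filters are filters. The rest amounts to using Lemma \ref{filcong} to convert back and forth between membership of conjugates $l_b(c),r_b(c)$ in a filter and the monoid-theoretic inequalities appearing in (3), with residuation as the bridge.

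For (2)$\Rightarrow$(3), fix $a,b\in A$ and let $F$ be the principal filter generated by $a$. Since $F$ contains $1$ and is closed under $\meet$, it contains $a\meet 1$; by upward closure together with closure under $\cdot$, a routine induction yields
\[
F=\{c\in A:(a\meet 1)^k\le c\text{ for some }k\in\mathbb N\}.
\]
By (2), $F$ is a congruence filter, so Lemma \ref{filcong} applied to $a\meet 1\in F$ and to $b$ gives $l_b(a\meet 1),r_b(a\meet 1)\in F$. Hence there exist $m,n\in\mathbb N$ with $(a\meet 1)^m\le l_b(a\meet 1)\le b\rr(a\meet 1)b$ and $(a\meet 1)^n\le r_b(a\meet 1)\le b(a\meet 1)\lr b$, which by residuation become $b(a\meet 1)^m\le(a\meet 1)b\le ab$ and $(a\meet 1)^nb\le b(a\meet 1)\le ba$, giving (3).

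For (3)$\Rightarrow$(1), let $F$ be an arbitrary filter of $\alg A$ and verify the criterion of Lemma \ref{filcong}. Pick $c\in F$ and $b\in A$, and apply (3) with $a:=c$ to obtain $n,m\in\mathbb N$ such that $(c\meet 1)^nb\le bc$ and $b(c\meet 1)^m\le cb$. By residuation these read $(c\meet 1)^n\le bc\lr b$ and $(c\meet 1)^m\le b\rr cb$; since both left-hand sides lie below $1$, in fact $(c\meet 1)^n\le r_b(c)$ and $(c\meet 1)^m\le l_b(c)$. Because $c,1\in F$ force $c\meet 1\in F$, every power $(c\meet 1)^k$ lies in $F$ by closure under $\cdot$, and then upward closure places $r_b(c),l_b(c)$ in $F$, so $F$ is a congruence filter.

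The main obstacle is purely notational: keeping the left/right conjugates straight and applying the correct half of the residuation adjunction $xy\le z\iff y\le x\rr z\iff x\le z\lr y$ at each step, since any slip flips the shape of an inequality in (3). Beyond Lemma \ref{filcong} and this bookkeeping, no further structural input is required.
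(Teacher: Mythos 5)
Your proposal is correct and follows essentially the same route as the paper: (1)$\Rightarrow$(2) trivially, then Lemma \ref{filcong} plus the description of the principal filter and residuation for (2)$\Rightarrow$(3), and the reverse residuation step for (3)$\Rightarrow$(1). The only (harmless) cosmetic difference is that you conjugate $a\meet 1$ rather than $a$ itself, which neatly avoids the paper's brief side remark about the case $a\ge 1$.
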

\begin{proof}
(1) clearly implies (2). Assume then (2), and let $a \in A$; then the principal filter $F$ generated by $a$ is a congruence filter. If $a \geq 1$ (3) can be easily checked. From Lemma \ref{filcong} it follows that for any $b \in A$ $l_b(a), r_b(a) \in F$. This implies that there  are $n,m \in \mathbb N$ such that $a^m \le l_b(a)$ and $a^n \le r_b(a)$, from which (3) follows by residuation and order preservation. Finally assume (3) and let $F$ be a filter of $\alg A$; if $a \in F$ and $b \in A$ then there is an $n \in \mathbb N$ with $(a\meet 1)^n b \le ba$. It follows that $(a \meet 1)^n \le ba \lr b$  and  so $ r_b(a) \in F$ and, by similar argument, $l_b(a) \in F$ as well. By Lemma \ref{filcong} $F$ is a congruence filter and (1) holds.
\end{proof}

Of course there might be no bound on $n$ in the above lemma; but if there is one (for instance, if $\alg A$ is finite), then $\VV(\alg A)$ is a normal variety of residuated lattices.  On the other hand if an entire variety is normal then we can say more.
Let $\mathsf{RL}^{\hat n}$ be the variety of residuated lattices satisfying:
\begin{align*}
(x \meet 1)^n y \le y x\\
y(x\meet 1)^n \le xy.
\end{align*}

\begin{lemma}\label{lemma:normal} A variety $\vv V$ of residuated lattices is normal if and only if $\vv V \sse \mathsf{RL}^{\hat n}$ for some $n$.
\end{lemma}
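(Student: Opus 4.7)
The direction ($\Leftarrow$) is immediate: if $\vv V \subseteq \mathsf{RL}^{\hat n}$, then for every $\alg A \in \vv V$ and every $a,b \in A$ we have $(a \meet 1)^n b \le ba$ and $b(a \meet 1)^n \le ab$, which is condition (3) of Lemma \ref{normalS} (with a single uniform exponent), so $\alg A$ is normal.

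For the nontrivial direction ($\Rightarrow$) I would run a free-algebra argument. Let $\alg F = \alg F_{\vv V}(x,y)$ be the $\vv V$-free algebra on two generators. Since $\vv V$ is normal, $\alg F$ is normal, so Lemma \ref{normalS}(3), applied to the specific pair of generators $(x,y)$, yields $n,m \in \mathbb{N}$ with
$$(x \meet 1)^n\, y \le yx \qquad \text{and} \qquad y\,(x \meet 1)^m \le xy$$
holding in $\alg F$. Setting $N = \max(n,m)$ and exploiting $x \meet 1 \le 1$, which gives $(x \meet 1)^N \le (x \meet 1)^n$ and $(x \meet 1)^N \le (x \meet 1)^m$, we upgrade both inequalities to the uniform form $(x \meet 1)^N y \le yx$ and $y(x \meet 1)^N \le xy$ in $\alg F$.

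Since these are inequalities in the two variables $x,y$ alone, and they hold in $\alg F_{\vv V}(x,y)$, by the universal property of free algebras they hold as identities throughout $\vv V$: for any $\alg A \in \vv V$ and any $a,b \in A$, the unique homomorphism $\alg F \to \alg A$ sending $x \mapsto a$, $y \mapsto b$ transports them to $(a \meet 1)^N b \le ba$ and $b(a \meet 1)^N \le ab$. Therefore $\vv V \subseteq \mathsf{RL}^{\hat N}$.

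The main obstacle — and really the whole content of the argument — is the passage from the element-wise existence of exponents in Lemma \ref{normalS}(3), where $n,m$ may depend on the specific $a,b$, to a single bound $N$ valid uniformly across all algebras in $\vv V$. The free algebra $\alg F_{\vv V}(x,y)$ supplies exactly this, since every pair in every $\alg A \in \vv V$ is a homomorphic image of its generators; no further computation is needed beyond the monotonicity of powers of an element below $1$.
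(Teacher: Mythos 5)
Your proof is correct and follows essentially the same route as the paper: the backward direction via Lemma \ref{normalS}(3), and the forward direction by applying Lemma \ref{normalS}(3) to the generators of the free algebra $\alg F_{\vv V}(x,y)$, taking the maximum exponent (using $(x\meet 1)^N \le (x\meet 1)^n$ for $N\ge n$), and invoking freeness to propagate the two inequalities to all of $\vv V$. Your explicit remark on the monotonicity of powers below $1$ just spells out what the paper leaves implicit in ``take $n=\max\{m,k\}$.''
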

\begin{proof} One direction is obvious. Assume then that $\vv V$ is normal and let $\alg F_\vv V(x,y)$ be the free 2-generated algebra in $\vv V$. Then $\alg F_\vv V(x,y)$ is normal and by Lemma \ref{normalS} there are
$m,k$ with  $(x \meet 1)^m y\le yx$ and $y(x \meet 1)^k \le xy$. Take $n = \max\{m,k\}$; then in $\alg F_\vv V(x,y)$
$$
(x \meet 1)^n y\le yx\qquad y(x \meet 1)^n \le xy.
$$
But since $\alg F_\vv V(x,y)$ is free, these inequalities hold in the entire $\vv V$, i.e. $\vv V \sse \mathsf{FL}^{\hat n}$.
\end{proof}

Thus in particular, every variety of commutative residuated lattices is normal.

\section{Join irreducible varieties and iterated conjugates}

Given any variety $\vv V$ we can consider its lattice of subvarieties $\Lambda (\vv V)$; we will say that a subvariety $\vv W \sse \vv V$ is {\bf join irreducible} ({\bf strongly join irreducible}) if $\vv W$ is a  join irreducible (strongly join irreducible) member of $\Lambda (\vv V)$.
We will see how in relevant cases  (strong) join irreducibility can be characterized by both a property of the corresponding logic and an algebraic description of the generating algebras.

In particular, we start by discussing the concept of well-connected algebra, that was introduced by L. Maksimova \cite{Maksimova1986} to characterize the disjunction property in intermediate logics, i.e. those logics $\mathcal L$ for which $\vv V_\mathcal L$ is a variety of Heyting algebras.  A substructural logic $\mathcal L$ has the {\bf disjunction
property} if whenever $\f \join \psi$ is a theorem of $\mathcal L$, in symbols $\mathcal L \vdash \f \join \psi$, then either $\mathcal L \vdash \f$ or $\mathcal L\vdash \psi$. Likewise a Heyting algebra $\alg A$ is {\bf well-connected} if $1$ is join irreducible, i.e. $a \join b =1$ implies either $a=1$ or $b=1$.  A weaker property is Halld\'en completeness; a logic $\vv L$ is {\bf Halld\'en complete} if it has the disjunction property w.r.t. to any pair of formulas that have no variables in common. Classical logic is Halld\'en complete but does not have the disjunction property, thus differentiating the two concepts.
As shown in \cite{KiharaOno2008} these concepts are  connected in commutative integral residuated lattices.

\begin{theorem} \label{KO}(Theorem 2.5 in \cite{KiharaOno2008}) For a variety $\vv V$ of commutative and integral residuated lattices the following are equivalent:
\begin{enumerate}
\item $\mathcal L_\vv V$ is Halld\'en complete;
\item  $\vv V$ is join irreducible;
\item  $\vv V = \vv V(\alg A)$ for some well-connected algebra $\alg A$.
\end{enumerate}
\end{theorem}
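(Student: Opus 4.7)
The plan is to establish the cycle of implications $(3)\Rightarrow(1)$, $(1)\Leftrightarrow(2)$, and $(1)\Rightarrow(3)$, with the central technical ingredient being a workhorse lemma: in $\mathsf{CIRL}$ every subdirectly irreducible algebra is already well-connected. I would prove this first. By Lemma~\ref{lemma:normal} commutativity implies normality, so in $\mathsf{CIRL}$ every filter is a congruence filter, and the monolith of a subdirectly irreducible $\alg A$ corresponds to a least nontrivial filter $F_0$ containing some $c<1$ such that every $d<1$ satisfies $d^n\le c$ for some $n$. If $a\vee b=1$ in $\alg A$ with $a,b<1$, pick $n,m$ with $a^n\le c$ and $b^m\le c$ and expand
\[
1=(a\vee b)^{n+m-1}=\bigvee_{i+j=n+m-1}a^{i}b^{j}
\]
using commutativity and distributivity of $\cdot$ over $\vee$; each summand has either $i\ge n$ or $j\ge m$, so integrality forces every term to be $\le c$, yielding $1\le c<1$, a contradiction.

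For $(3)\Rightarrow(1)$, given $\vv V=\VV(\alg A)$ with $\alg A$ well-connected and a purported failure of Halld\'en completeness $\varphi_1\vee\varphi_2\in\mathcal L_\vv V$ with disjoint variables and neither $\varphi_i\in\mathcal L_\vv V$, I would pick assignments $\sigma_i$ in $\alg A$ with $\varphi_i[\sigma_i]<1$ and combine them (possible by disjointness) into a single joint assignment $\sigma$. Then $\varphi_1[\sigma]\vee\varphi_2[\sigma]\ge 1$, which together with integrality gives equality with $1$, while both disjuncts are strictly below $1$, contradicting well-connectedness of $\alg A$.

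The equivalence $(1)\Leftrightarrow(2)$ I would establish by two contrapositives. If $\vv V=\vv V_1\vee\vv V_2$ nontrivially, pick $\varphi_i\in\mathcal L_{\vv V_i}\setminus\mathcal L_\vv V$, rename variables to make $\varphi_1,\varphi_2$ disjoint, and observe that $\vv V_j\models\varphi_1\vee\varphi_2\ge 1$ for each $j$ by monotonicity of $\vee$, hence $\vv V=\vv V_1\vee\vv V_2\models\varphi_1\vee\varphi_2\ge 1$, witnessing a Halld\'en-failure. Conversely, assuming Halld\'en fails with disjoint-variable witnesses $\varphi_1,\varphi_2$, set $\vv V_i=\vv V+\{\varphi_i\ge 1\}$, both properly contained in $\vv V$, and show $\vv V=\vv V_1\vee\vv V_2$ by checking the inclusion on subdirectly irreducibles: if $\alg A\in\vv V$ is SI and $\alg A\not\models\varphi_1\ge 1$, fix $\sigma_1$ with $\varphi_1[\sigma_1]<1$; for any $\sigma_2$ on the variables of $\varphi_2$ the joint $\sigma$ yields $\varphi_1[\sigma]\vee\varphi_2[\sigma]=1$ with $\varphi_1[\sigma]<1$, and the workhorse lemma (applied to the SI $\alg A$) forces $\varphi_2[\sigma_2]=1$, placing $\alg A\in\vv V_2$. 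Since SIs generate $\vv V$, the equality follows.

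Finally, for $(1)\Rightarrow(3)$ I would use the countably generated free algebra $\alg F_\vv V(\omega)$, which generates $\vv V$, and show it is well-connected: if $\bar\varphi\vee\bar\psi=1$ in $\alg F_\vv V(\omega)$ with both elements strictly below $1$, then renaming the variables of $\psi$ to be fresh with respect to $\varphi$ preserves validity of the disjunction (by closure of $\vv V$ under substitution) and non-validity of $\psi$ alone, yielding an immediate violation of Halld\'en completeness. The most delicate step throughout is the workhorse lemma, particularly the monomial expansion of $(a\vee b)^n$ and the identification of the monolith with a principal congruence filter; once that is in hand, the remaining arguments reduce to standard manipulation of substructural formulas together with Birkhoff's subdirect representation and closure of varieties under substitution.
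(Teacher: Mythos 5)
Most of your proposal is sound, and it follows essentially the Kihara--Ono route (which the paper only cites; the paper's own contribution is the generalization in Theorem \ref{mainwell}). Your workhorse lemma is correct: in $\mathsf{CIRL}$ conjugates are trivial, so the congruence filter generated by $d\le 1$ is $\{b: d^n\le b \text{ for some } n\}$, the monolith corresponds to a least nontrivial congruence filter containing some $c<1$, and the expansion of $(a\join b)^{n+m-1}$ using distributivity of $\cdot$ over $\join$, commutativity and integrality gives the contradiction $1\le c$. Your argument for $(3)\Rightarrow(1)$, and both contrapositives for $(1)\Leftrightarrow(2)$ (in particular the verification of $\vv V=\vv V_1\join\vv V_2$ on subdirectly irreducibles, which is exactly how the join $p\ge 1$, $q\ge 1\mapsto p\join q\ge 1$ is justified in the commutative integral case) are fine.

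The gap is in $(1)\Rightarrow(3)$. The renaming step goes in the wrong direction: if $\vv V\vDash \f\join\psi\ge 1$ where $\f$ and $\psi$ share variables, you cannot conclude $\vv V\vDash \f\join\psi'\ge 1$ for $\psi'$ obtained by making the variables of $\psi$ fresh; closure under substitution yields validity of $\f\join\psi$ \emph{from} validity of $\f\join\psi'$, not conversely. Indeed $\alg F_{\vv V}(\omega)$ is typically \emph{not} well-connected even when $\mathcal L_{\vv V}$ is Halld\'en complete: in $\VV(\mathbf 2)$, or in any variety of G\"odel hoops containing $\vv G_2$, the free algebra satisfies $(x\imp y)\join(y\imp x)=1$ with neither joinand equal to $1$, although these varieties are join irreducible and are generated by well-connected chains. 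So your cycle only yields $(3)\Rightarrow(1)\Leftrightarrow(2)$, and the converse to $(3)$ is missing. The repair is the one used in the paper's proof of Theorem \ref{mainwell} ((2)$\Rightarrow$(3)): take any generator $\alg A$ of $\vv V$, consider its subdirectly irreducible quotients $\alg A/\th_i$, and for every equation $\e$ failing in $\vv V$ let $L_\e$ be the set of indices $i$ with $\alg A/\th_i\not\vDash\e$; join irreducibility, together with the fact that in $\mathsf{CIRL}$ the join of the subvarieties axiomatized by $p\ge 1$ and $q\ge 1$ (variables made disjoint) is axiomatized by $p\join q\ge 1$, shows that the family $\{L_\e\}$ has the finite intersection property; an ultraproduct of the $\alg A/\th_i$ over an ultrafilter containing this family is then well-connected (by your workhorse lemma, since well-connectedness is a first-order property) and still generates $\vv V$.
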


How can we extend the definition of well-connected to the nonintegral case?  The solution proposed in  \cite{KiharaOno2008} (and later followed in \cite{HorcikTerui2011}) is to define a residuated lattice $\alg A$  to be {\bf well-connected} if $1$ is {\bf join prime} in $\alg A$, i.e. $a \join b \ge 1$ implies $a \ge 1$ or $b \ge 1$.

We observe straight away that  neither integrality nor commutativity are needed to prove  that
(3) implies (2).

\begin{lemma}\label{KOlemma} Let $\vv V$ be a variety of residuated lattices;  if  $\vv V = \VV(\alg A)$ for some well-connected algebra $\alg A \in \vv V$ then $\vv V$ is join irreducible.
\end{lemma}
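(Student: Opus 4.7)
The plan is to argue by contradiction. Suppose $\vv V$ is not join irreducible in $\Lambda(\vv V)$, so that $\vv V = \vv V_1 \join \vv V_2$ for proper subvarieties $\vv V_1, \vv V_2 \subsetneq \vv V$. For each $i = 1, 2$, since $\vv V_i$ is properly contained in $\vv V$, there is an equation that holds in $\vv V_i$ but fails in $\vv V$. Invoking the observation made just after the statement of Theorem 2.1 (a conjunction of finitely many equations over $\mathsf{RL}$ is equivalent to a single inequality of the form $p \geq 1$), I can take the separating equation to be of the form $p_i \geq 1$, where $p_i$ is a term in the language of residuated lattices.

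Next, I would rename the variables of $p_1$ and $p_2$ so that the two terms have no variables in common. Since $p_1 \geq 1$ holds in $\vv V_1$ we get $p_1 \join p_2 \geq p_1 \geq 1$ in $\vv V_1$, and symmetrically $p_1 \join p_2 \geq 1$ holds in $\vv V_2$. Because $\vv V = \vv V_1 \join \vv V_2$ satisfies exactly the equations common to $\vv V_1$ and $\vv V_2$, the inequality $p_1 \join p_2 \geq 1$ holds throughout $\vv V$, and in particular it is satisfied by $\alg A$.

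Now I exploit well-connectedness of $\alg A$. Since, by hypothesis, $\vv V \not\models p_i \geq 1$ and $\vv V = \VV(\alg A)$, we have $\alg A \not\models p_i \geq 1$ for each $i$; hence there exist assignments $\bar a$ to the variables of $p_1$ and $\bar b$ to the (disjoint) variables of $p_2$ with $p_1^\alg A(\bar a) \not\geq 1$ and $p_2^\alg A(\bar b) \not\geq 1$. Evaluating the combined assignment $\bar a \cup \bar b$ in $p_1 \join p_2$, we obtain an element $\geq 1$ that is the join of two elements neither of which is $\geq 1$, contradicting the fact that $1$ is join prime in $\alg A$. This forces $\alg A \models p_i \geq 1$ for some $i$, hence $\vv V \models p_i \geq 1$, contradicting the choice of $p_i$.

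I do not foresee a hard step: the argument is essentially the standard Maksimova-style one, but the main point requiring care is the variable-disjoint renaming, which is exactly what makes join-primeness of $1$ in $\alg A$ (rather than the stronger condition that $1$ is join irreducible) sufficient to conclude. The reduction to a single inequality $p \geq 1$ and the fact that equational classes are closed under homomorphic images, subalgebras, and products (so that satisfaction in $\alg A$ lifts to $\VV(\alg A)$) are the only background facts used, both already recorded in the preliminaries.
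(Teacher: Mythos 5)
Your proof is correct and follows essentially the same route as the paper's: separate the proper subvarieties by inequalities $p\ge 1$, $q\ge 1$ in disjoint variables, note that $p\join q\ge 1$ holds in the join but, by join primeness of $1$ in the well-connected generator $\alg A$, fails at witnessing elements, giving the contradiction. The only cosmetic difference is that you choose equations separating each $\vv V_i$ from $\vv V$ directly, whereas the paper first observes the two subvarieties are incomparable and separates them from each other; both variants are immediate and the core argument is identical.
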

\begin{proof}  Let $\vv V = \VV(\alg A)$ for some well-connected algebra $\alg A \in \vv V$, and suppose by way of contradiction that $\vv V$ is not join irreducible, i.e. $\vv V = \vv W  \join \vv Z$ for some proper subvarieties $\vv W$ and $\vv Z$. Then $\vv W$ and $\vv Z$ must be incomparable,
hence there is an equation $p(\vuc xn) \ge 1$ holding in $\vv W$ but not in $\vv Z$ and an equation $q(\vuc ym) \ge 1$ holding in $\vv Z$ but not in $\vv W$. Clearly neither equation can hold in
$\vv V$, so there are $\vuc an,\vuc bm \in A$ with $p(\vuc an) \not \ge 1$ and $q(\vuc bm) \not\ge 1$; since $\alg A$ is well-connected we must have $p(\vuc an) \join q(\vuc bm) \not\ge 1$ hence
$\vv V = \VV(\alg A) \not\vDash  p(\vuc xn) \join q(\vuc ym) \ge 1$. On the other hand clearly $\vv W,\vv Z \vDash  p(\vuc xn) \join q(\vuc ym) \ge 1$ and since $\vv V = \vv W \join \vv Z$ we must have
$\vv V \vDash p(\vuc xn) \join q(\vuc ym) \ge 1$. This is a contradiction, derived from the assumption that $\vv V$ was not join irreducible.
\end{proof}

The other implications in the general case however do not hold; an analysis of  the Kihara-Ono construction reveals at once that there are two critical points. If $\vv V$ is a variety of commutative residuated lattices then:
\begin{enumerate}
\ib every subdirectly irreducible algebra in $\vv V$  is well-connected (\cite{KiharaOno2008}, Lemma 2.2);
\ib  if $\vv W, \vv Z$ are subvarieties of $\vv V$ axiomatized (relative to $\vv V$) by $p \ge 1$ and $q \ge 1$ (and we make sure that $p$ and $q$ have no variables in common), then
$\vv W \join \vv Z$ is axiomatized relative to $\vv V$ by $ p \join q \ge 1$ (\cite{KiharaOno2008}, Lemma 2.1).
\end{enumerate}
Both statements are false if we remove commutativity; for the first it is easy to find a finite and integral residuated lattice that is simple but not well-connected (for instance the example below Lemma 3.60 in \cite{GJKO}),
while the second fails fore more general reasons discussed at length in \cite{Galatos2004}.

In what follows we will describe classes of varieties of residuated lattices for which an analogous of Theorem \ref{KO} can be proved. To do so we will adapt to our purpose  part of the theory developed in \cite{Galatos2004} about satisfaction of formulas generated by iterated conjugates.

We define a set $B^n(x,y)$ of equations  in two variables $x,y$  for all $n \in \mathbb N$ in the following way; let $\Gamma^n$ be the set of iterated conjugates of \emph{length n} (i.e. a composition of $n$ left and right conjugates) over the appropriate language, with $\Gamma^{0} = \{l_{1}\}$ (for a more general definition, here not needed, see \cite{Galatos2004}, page 229). For all $n \in \mathbb{N}$
$$
B^{n}(x,y) = \{ \gamma_{1}(x) \join \gamma_{2}(y) \app  1 \,:\, \gamma_{1}, \gamma_{2} \in \Gamma^{n} \}.
$$
Let $\alg A$ be  a residuated lattice and $a,b \in A$; we say that $\alg A$ {\bf satisfies} $B^n(a,b)$, in symbols $\alg A \vDash B^n(a,b)$ if $\alg A,a,b \vDash B^n(x,y)$. i.e. $\g_1(a) \join \g_2(b) = 1$ for all $\g_1,\g_2 \in \Gamma^n(\alg A)$. We say that $\alg A$ satisfies ($G_{n,k}$) if for all $a,b \in A$, if $\alg A \vDash B^n(a,b)$, then $\alg A\vDash B^k(a,b)$.

\begin{lemma}\label{Glemma}  Let $\alg A$ be a residuated lattice.
\begin{enumerate}
\item for all $n\in \mathbb N$, for all $a,b \in A$, if $\alg A\vDash B^n(a,b)$ then $\alg A\vDash B^{n-1}(a,b)$ ;
\item for all $n \in \mathbb N$ if $\alg A$ satisfies  ($G_{n,n+1}$) then it satisfies ($G_{n,k}$) for all $k \ge n$.
\end{enumerate}
\end{lemma}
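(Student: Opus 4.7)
For part (1), the plan is to exhibit, for each $\delta \in \Gamma^{n-1}$, an element of $\Gamma^n$ that acts identically to $\delta$ on all inputs; then $B^n(a,b)$ will immediately force $B^{n-1}(a,b)$. The candidate is $l_1 \circ \delta$: since $l_1(y) = 1 \rr (y \cdot 1) \meet 1 = y \meet 1$, and since every $\delta \in \Gamma^{n-1}$ already returns values $\le 1$ (either because its outermost conjugate ends in $\meet\, 1$, or because, when $n-1=0$, $\delta$ is $l_1$ itself), one has $(l_1 \circ \delta)(a) = \delta(a) \meet 1 = \delta(a)$. Applying the hypothesis $\alg A \vDash B^n(a,b)$ to the pair $(l_1 \circ \delta_1, l_1 \circ \delta_2) \in \Gamma^n \times \Gamma^n$ then yields $\delta_1(a) \join \delta_2(b) = 1$ for arbitrary $\delta_1, \delta_2 \in \Gamma^{n-1}$, which is precisely $\alg A \vDash B^{n-1}(a,b)$.

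For part (2), the plan is induction on $k \ge n$. The base $k = n$ is trivial, and $k = n+1$ is exactly the hypothesis $(G_{n,n+1})$. For the inductive step, assume $(G_{n,k-1})$ holds and let $(a,b)$ satisfy $B^n(a,b)$; then $(G_{n,n+1})$ immediately gives $B^{n+1}(a,b)$. Given arbitrary $\sigma_1, \sigma_2 \in \Gamma^k$, I will decompose each as $\sigma_i = \tau_i \circ \gamma_i$ with $\tau_i \in \Gamma^{k-1}$ and $\gamma_i \in \Gamma^1$, by peeling off the innermost conjugate; this decomposition is available for any $k \ge 1$.

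The crux of the argument, and the point I expect to require the most care, is that the inductive hypothesis must be applied not to the original pair $(a,b)$ but to the shifted pair $(\gamma_1(a), \gamma_2(b))$. To enable this, I will first verify $\alg A \vDash B^n(\gamma_1(a), \gamma_2(b))$: for any $\tau'_1, \tau'_2 \in \Gamma^n$, the compositions $\tau'_i \circ \gamma_i$ lie in $\Gamma^{n+1}$, and so $B^{n+1}(a,b)$ supplies $\tau'_1(\gamma_1(a)) \join \tau'_2(\gamma_2(b)) = 1$. Since $(G_{n,k-1})$ is universal in its pair, it applies at $(\gamma_1(a), \gamma_2(b))$ and delivers $B^{k-1}(\gamma_1(a), \gamma_2(b))$; instantiating it with $(\tau_1, \tau_2) \in \Gamma^{k-1} \times \Gamma^{k-1}$ gives $\sigma_1(a) \join \sigma_2(b) = \tau_1(\gamma_1(a)) \join \tau_2(\gamma_2(b)) = 1$. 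As $\sigma_1, \sigma_2$ were arbitrary, $B^k(a,b)$ holds, closing the induction.
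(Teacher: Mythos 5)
Your proof is correct and takes essentially the same route as the paper's: part (1) rests on the observation that $l_1(y)=y\wedge 1$ fixes the (already $\le 1$) outputs of iterated conjugates, and part (2) hinges on the same key trick of peeling off the innermost conjugates $\gamma_1,\gamma_2$ and applying the universally quantified condition to the shifted pair $(\gamma_1(a),\gamma_2(b))$. The only difference is organizational: you induct on the upper index $k$ with $n$ fixed, while the paper proves $(G_{m,m+1})\Rightarrow(G_{m+1,m+2})$ and then chains by transitivity and induction — the substance is identical.
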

\begin{proof} The proof of (1) is straightforward, once we have observed that $l_{1}(a) = a \land 1$ and all conjugates are smaller than $1$.

For (2), suppose that $\alg A$ satisfies ($G_{n,n+1}$),
we will show that $\alg A$ satisfies $(G_{n+1,n+2})$. As the relation $\imp$ is clearly transitive this implies that $(G_{n,n+2})$ and hence, by induction, ($G_{n,k}$) for $k \ge n$.

Let then $a,b \in A$ such that $\alg A\vDash B^{n+1}(a,b)$. Given any $\hat{\g}_1,\hat{\g}_2 \in \Gamma^{n+2}$, there are $\d_1,\d_2,\d'_1,\d'_2 \in \Gamma^1$, $\g_1,\g_2 \in \Gamma^n$ such that
$$
\hat{\g}_1(x) = \d_1\g_1\d_2(x) \qquad\quad \hat{\g}_2(y) = \d'_1\g_2\d'_2 (y).
$$
Since $\alg A\vDash B^{n+1}(a,b)$, then  $\g_1\d_2(a) \join \g_2\d'_2(b) = 1$ for all $\g_1,\g_2 \in \Gamma^n(\alg A)$.  If $u=\d_2(a)$ and $v =\d'_2(b)$, then $\alg A$ satisfies
$B^n(u,v)$; but since $\alg A$ satisfies ($G_{n, n+1}$), then it satisfies $B^{n+1}(u,v)$ as well. This implies
$$
\hat{\g}_1(a) \join \hat{\g}_2(b) = \d_1\g_1(u) \d'_1\g_2(v) = 1,
$$
which in turn yields that $\alg A\vDash B^{n+2}(a,b)$, and then $\alg A$ satisfies $(G_{n+1,n+2})$. Thus, by transitivity and induction, it satisfies ($G_{n,k}$) for $k \ge n$.
\end{proof}

Let's look closely at the condition ($G_{0,1}$); first we observe the following.

\begin{lemma} Let $\vv V$ be a variety of residuated lattices;  $\vv V$ satisfies ($G_{0,1}$) (i.e.  $\alg A$ satisfies ($G_{0,1}$) for all $\alg A \in \vv V$)  if and only if it satisfies the quasi equation
\begin{equation}
x \join y \app 1 \qquad \Longrightarrow\qquad l_w(x) \join r_z(y) \app 1 \tag{$G$}.
\end{equation}
\end{lemma}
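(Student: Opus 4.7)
The plan rests on two elementary facts about conjugates: every $l_w(u)$ and $r_z(u)$ is $\le 1$ by construction, and the specializations $l_1(u) = r_1(u) = u \meet 1$ collapse to the identity on any element that is already $\le 1$. I also use that $l_w$ and $r_z$ are monotone in their main argument, and the symmetry of the hypothesis $x \join y \app 1$ in $x, y$.

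For the direction $(G_{0,1}) \Rightarrow (G)$: if $x \join y \app 1$ holds as an equation, then $x, y \le 1$, hence $x = x \meet 1$ and $y = y \meet 1$, so the hypothesis $B^0(x,y)$ of $(G_{0,1})$ is automatically satisfied. Therefore $B^1(x,y)$ holds, and in particular the specific equation $l_w(x) \join r_z(y) \app 1$ demanded by $(G)$.

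For the converse $(G) \Rightarrow (G_{0,1})$, assume $(a \meet 1) \join (b \meet 1) = 1$ and set $a' = a \meet 1$, $b' = b \meet 1$, so $a', b' \le 1$ and $a' \join b' = 1$. Applying $(G)$ directly yields $l_w(a') \join r_z(b') = 1$ for all $w, z$, and monotonicity lifts this to $l_w(a) \join r_z(b) = 1$; applying $(G)$ with the swapped pair $(b', a')$ yields $r_w(a) \join l_z(b) = 1$ in the same manner.

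The remaining two members of $B^1(a,b)$, namely $l_w(a) \join l_z(b) \app 1$ and $r_w(a) \join r_z(b) \app 1$, require iterating $(G)$, and this is the main subtlety. For $l \join l$: specialize $z = 1$ in the identity $l_w(a') \join r_z(b') = 1$, using $r_1(b') = b'$, to obtain $l_w(a') \join b' = 1$; now reapply $(G)$ to the pair $(b', l_w(a'))$ and specialize $z = 1$ in the new conclusion, using $r_1(l_w(a')) = l_w(a')$, obtaining $l_{w'}(b') \join l_w(a') = 1$, which monotonicity upgrades to $l_w(a) \join l_z(b) = 1$. The $r \join r$ case is handled symmetrically, by specializing $w = 1$ in the $l \join r$ identity and reapplying $(G)$ to the pair $(r_z(b'), a')$ with $w = 1$ in the output. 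This one-step iteration is exactly what compensates for the asymmetric conclusion of $(G)$ (one $l$, one $r$) against the four-way symmetric requirement of $B^1$.
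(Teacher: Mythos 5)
Your proof is correct and follows essentially the same route as the paper: the forward direction via $x,y\le 1$ and $B^0(x,y)$ is identical, and the converse uses the same mechanism of applying ($G$) to $a\meet 1,\,b\meet 1$, exploiting that conjugates are $\le 1$ and monotone, and then re-applying ($G$) with a specialization at $1$ (where $l_1(u)=r_1(u)=u\meet 1$) to obtain the $l$--$l$ and $r$--$r$ members of $B^1(a,b)$. Your write-up is simply more explicit than the paper's about the swap of the pair and which conjugates are specialized to $1$ in the second application.
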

\begin{proof} One direction is  easy to check. Indeed, suppose $\vv V$ satisfies ($G_{0,1}$), and let $\alg A \in \vv V, a, b \in A $ such that $a \lor b = 1$. Then $a = a \land 1, b = b \land 1$, and ($G$) directly follows from ($G_{0,1}$), that corresponds to the set of quasi equations: $$(x \land 1) \lor (y \land 1) \app 1  \qquad \Longrightarrow\qquad \g_{1}(x) \join \g_{2}(y) \app 1, \mbox{ for } \g_{1}, \g_{2} \in \Gamma^{1}.$$

For the other, suppose ($G$) holds, and let $\alg A \in \vv V$ and $a,b \in A$; then $\alg A \vDash B^0(a,b)$ if and only if $(a \meet 1) \join (b \meet 1) =1$.  Then by ($G$)  for all $u,v \in A$
$$
1 = l_u(a\meet 1) \join r_v(b \meet 1) \le l_u(a) \join r_v(b) \le 1.
$$
Thus $l_u(a) \join r_v(b) = 1$, and in particular for all $u \in A$, $l_u(a) \join r_{1}(b) = l_u(a) \join (b\meet 1) = 1$ and so, applying again the same reasoning since $l_{u}(a) \leq 1$, for all $v \in A$
$$
1= l_u(a) \join l_v(b\meet 1) \le l_u(a) \join l_v(b) \le 1.
$$
The proof that $r_u(a) \join r_v(b) =1$ is similar, so we conclude that $\alg A \vDash B^1(a,b)$; therefore $\alg A$ satisfies ($G_0$) and so does $\vv V$.
\end{proof}

($G$) is trivially satisfied by any commutative variety of residuated lattices, but moreover:

\begin{lemma}
Any normal variety $\vv V$ of residuated lattices satisfies ($G$).
\end{lemma}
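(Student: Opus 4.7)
The plan is to reduce the problem to a clean inequality between products by using the bound on iterated conjugates provided by normality. By Lemma~\ref{lemma:normal}, normality of $\vv V$ gives a single $n \in \mathbb N$ such that $\vv V \sse \mathsf{RL}^{\hat n}$; that is, every $\alg A \in \vv V$ satisfies $(x \meet 1)^n y \le yx$ and $y(x \meet 1)^n \le xy$. Fix $\alg A \in \vv V$ and $a,b \in A$ with $a \join b \ap 1$. Since $a,b \le a \join b = 1$, we have $a \meet 1 = a$ and $b \meet 1 = b$, so working with $a$ and $b$ directly is harmless.

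For any $w \in A$, the equation $w(a \meet 1)^n \le aw$ together with residuation $uv \le z \Leftrightarrow v \le u \rr z$ yields $a^n = (a \meet 1)^n \le w \rr aw$. Since $a^n \le 1$, we obtain $a^n \le w \rr aw \meet 1 = l_w(a)$. An entirely symmetric computation using $(b \meet 1)^n z \le zb$ and $uv \le z \Leftrightarrow u \le z \lr v$ gives $b^n \le zb \lr z \meet 1 = r_z(b)$ for every $z \in A$. Hence
\[
l_w(a) \join r_z(b) \ge a^n \join b^n,
\]
and since both conjugates are bounded above by $1$, it suffices to show $a^n \join b^n \ge 1$.

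To establish this I would exploit distributivity of multiplication over joins: from $a \join b = 1$ we get
\[
1 = (a \join b)^{2n-1} = \bigvee_{s \in \{a,b\}^{2n-1}} s_1 s_2 \cdots s_{2n-1}.
\]
Each summand is a length-$(2n-1)$ product in which, by pigeonhole, either $a$ occurs at least $n$ times or $b$ does. Suppose $a$ occurs at positions $i_1 < \dots < i_k$ with $k \ge n$. Because every factor is at most $1$, we may drop any factor from the middle of the product and only make it larger: if $s_j \le 1$ then $\cdots s_{j-1} s_j s_{j+1} \cdots \le \cdots s_{j-1} \cdot 1 \cdot s_{j+1} \cdots$. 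Iterating this to remove every factor other than those at positions $i_1,\dots,i_k$, we obtain $s_1 \cdots s_{2n-1} \le a^k \le a^n$, using $a \le 1$ in the last step. The symmetric case yields $\le b^n$. Therefore every summand is $\le a^n \join b^n$, and so $1 \le a^n \join b^n$, which completes the argument.

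The only place where one must be careful is the pigeonhole/dropping step, since in a non-commutative residuated lattice one cannot freely rearrange factors. The argument avoids this by never permuting anything: it only deletes factors, which is sound precisely because every $s_j$ lies below $1$. This is the whole content of why the proof goes through in the non-commutative but normal setting, and it is the step I would write out most carefully.
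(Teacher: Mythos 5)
Your proof is correct and follows essentially the same route as the paper's: normality (Lemma \ref{lemma:normal}) provides a uniform $n$ with $(a\meet 1)^n \le l_w(a)$ and $(b\meet 1)^n \le r_z(b)$, reducing everything to $a^n \join b^n \ge 1$. The only difference is that where the paper cites Lemma 3.20 of \cite{Galatos2004} (in the spirit of Lemma \ref{product}) for this last step, you prove it directly by expanding $(a\join b)^{2n-1}$ via distributivity of the product over joins and then using pigeonhole together with deletion of factors below $1$, which is a sound, self-contained substitute.
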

\begin{proof}
Let $\alg A \in \vv V$, and consider $x, y \in A$ with $x \lor y = 1$, thus clearly $x, y \leq 1$. Since both conjugates are below $1$, $l_w(x) \join r_z(y) \leq 1$. Thus we need to show that $1 \leq l_w(x) \join r_z(y) $.

By Lemma \ref{lemma:normal}, $(y \meet 1)^n z \le zy$ and $ w(x\meet 1)^n \le xw$, thus $(x \land 1)^{n} \leq l_w(x)$ and $(y \land 1)^{n} \leq r_{z}(y)$. From \cite{Galatos2004}, Lemma 3.20, we get that if $x \lor y = 1$ then $x^{n} \lor y^{n} = (x \land 1)^{n} \lor (y \land 1)^{n} =  1$. Thus, $$1 \leq  (x \land 1)^{n} \lor (y \land 1)^{n} \leq l_w(x) \join r_z(y)$$ and the proof is completed.
\end{proof}

There are also non normal varieties satisfying $(G)$, for instance, {\em representable} varieties (see Section \ref{repr} below).
We observe also that the variety of $\ell$-groups satisfies ($G_{1,2}$) (see \cite{Galatos2004}, p. 235).
The last lemma we need is implicit in \cite{Galatos2004}.

\begin{lemma}\label{galatos}  Let $\vv V$ be a variety of residuated lattices and let $p(\vuc xn) \ge 1$, $q(\vuc ym) \ge 1$ be two inequalities  not holding in $\vv V$. If $\vv W$ and $\vv Z$ are the subvarieties axiomatized by $p \meet 1 \app 1$ and $q \meet 1 \app 1$ respectively, then $\vv W \join \vv Z$ is axiomatized by the set  $B(p,q) = \bigcup_{n\in \mathbb N} B^n(p,q)$.  Moreover if $\vv V$ satisfies ($G_{l,l+1}$) for some $l \in \mathbb{N}$ then  $\vv W \join \vv Z$ is axiomatized by the finite set $B^l(p,q)$.
\end{lemma}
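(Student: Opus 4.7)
The plan is to prove the first statement by double inclusion and then derive the moreover clause from Lemma \ref{Glemma}.

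For the inclusion $\vv W \join \vv Z \vDash B(p,q)$, it is enough to check this in the two generating subvarieties separately, since $B(p,q)$ is a set of equations. In any $\alg A \in \vv W$ one has $p(\vec c) \ge 1$ for every tuple $\vec c$; a short induction then shows that any iterated conjugate of an element $\ge 1$ equals $1$ (base case: $x \ge 1$ yields $xa \ge a$, so $a \rr xa \ge 1$ and $l_a(x) = (a \rr xa) \meet 1 = 1$, and similarly $r_a(x) = 1$). Since $\gamma_2(q(\vec d)) \le 1$ automatically, we conclude $\gamma_1(p(\vec c)) \join \gamma_2(q(\vec d)) = 1$. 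A symmetric argument handles $\vv Z$.

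For the reverse inclusion, since any variety is generated by its subdirectly irreducible members, it suffices to show that every SI $\alg A \in \vv V$ satisfying $B(p,q)$ lies in $\vv W \cup \vv Z$. Given such an $\alg A$, consider the congruence filters $F_p$ and $F_q$ of $\alg A$ generated by $X_p = \{p(\vec c) : \vec c \in A^n\}$ and $X_q = \{q(\vec d) : \vec d \in A^m\}$ respectively. Since filters are closed under meets and contain $1$, $F_p$ contains every $p(\vec c) \meet 1$, and then using $a \lr 1 = a$ one verifies $(p(\vec c) \meet 1, 1) \in \theta_{F_p}$, whence $\alg A / \theta_{F_p} \in \vv W$; symmetrically $\alg A / \theta_{F_q} \in \vv Z$. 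The strategy is then to show $F_p \cap F_q = A^+$, equivalently $\theta_{F_p} \cap \theta_{F_q} = \Delta$: by subdirect irreducibility one of these congruences is trivial, placing $\alg A$ in $\vv W$ or $\vv Z$.

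To establish $F_p \cap F_q = A^+$, take $c$ in the intersection; by meet closure of filters we may assume $c \le 1$, and the aim is $c = 1$. Corollary \ref{prinfil} yields lower bounds of the form $P_p = \alpha_1(p(\vec c_1)) \cdots \alpha_r(p(\vec c_r)) \le c$ and $P_q = \beta_1(q(\vec d_1)) \cdots \beta_s(q(\vec d_s)) \le c$, with iterated conjugates $\alpha_i, \beta_j$ (any factor of the form $\gamma(1) = 1$ can be discarded). Padding all conjugates to a common length $N$ by prepending copies of $l_1$---which fixes conjugate values since iterated conjugates are always $\le 1$---and applying $B^N(p,q)$ gives $\alpha_i(p(\vec c_i)) \join \beta_j(q(\vec d_j)) = 1$ for every pair $(i,j)$. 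The elementary identity $1 = (x \join y)(x' \join y) = xx' \join xy \join yx' \join y^2 \le xx' \join y$, valid when $x, x' \le 1$, iterated first over $i$ (with $j$ fixed) and then over $j$ (with $P_p$ fixed), lifts this to $P_p \join P_q = 1$. Hence $c \ge 1$, as required.

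The moreover clause follows at once from Lemma \ref{Glemma}: under $(G_{l,l+1})$ any $\alg A \in \vv V$ satisfying $B^l(p,q)$ satisfies $B^k(p,q)$ for all $k$ (for $k \ge l$ by part (2), for $k \le l$ by part (1)), hence satisfies $B(p,q)$, and the first part applies. The main obstacle is the combinatorial lifting from the pairwise equalities $\alpha_i(p(\vec c_i)) \join \beta_j(q(\vec d_j)) = 1$ to the single equality $P_p \join P_q = 1$, which crucially uses both the distributivity of multiplication over join and the fact that all factors lie below $1$; this is precisely what makes iterated conjugates (rather than the bare terms $p, q$) the natural language for axiomatizing joins of subvarieties of residuated lattices.
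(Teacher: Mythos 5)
Your proof is correct, but it takes a different route from the paper: the paper disposes of this lemma in two lines by citing Corollary 4.3 and Theorem 4.4(1) of Galatos's \emph{Equational Bases for Joins of Residuated-lattice Varieties}, observing only that the extra equations forcing $(G_{l,l+1})$ may be dropped because $(G_{l,l+1})$ already holds in $\vv V$. You instead reconstruct the underlying argument from scratch: one inclusion via the observation that iterated conjugates of elements $\geq 1$ evaluate to $1$; the other by passing to subdirectly irreducible members, using Corollary \ref{prinfil} and the congruence--congruence-filter isomorphism to reduce everything to showing $F_p \cap F_q = A^+$, padding conjugates with $l_1$ to a common length (the same trick the paper uses in Lemma \ref{wellconnected}), and then lifting the pairwise identities $\alpha_i(p(\vec c_i)) \join \beta_j(q(\vec d_j)) = 1$ to the product identity $P_p \join P_q = 1$. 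That last lifting is exactly the paper's Lemma \ref{product} (Galatos's Lemma 3.2), which you could simply have invoked rather than reproved; note also that your displayed inequality $(x \join y)(x' \join y) \le xx' \join y$ uses $y^2 \le y$ and hence needs $y \le 1$ as well as $x, x' \le 1$ --- harmless here, since every element in play is a conjugate value and thus below $1$, but worth stating. Your handling of the moreover clause via Lemma \ref{Glemma} matches the intended argument. What your approach buys is self-containment (essentially a re-derivation of the relevant special case of Galatos's results inside the paper's own machinery); what the paper's approach buys is brevity and the full generality of the cited results.
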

\begin{proof}

The first part is a consequence of Corollary 4.3 in \cite{Galatos2004}. The second part follows from Theorem 4.4 (1) in \cite{Galatos2004}, where it is shown that $\vv W \join \vv Z$ is axiomatized by $B^l(p,q)$ plus the (finite set of) equations implying ($G_{l,l+1}$), which here can be omitted since ($G_{l,l+1}$) holds in $\vv V$.
\end{proof}

\section{$\Gamma$-connectedness and $\Gamma$-completeness}

The {\bf negative cone} of a residuated lattice $\alg A$  is an integral residuated lattice $\alg A^-$ whose universe is $A^- = \{a \in A: a \le 1\}$ and the operations are defined as follows: the lattice operations and
the product are the same as in $\alg A$ but the residuations are defined as $a \lr_1 b := a \lr b\meet 1$ and $a \rr_1 b := a \rr b \meet 1$.  The connections between $\alg A^-$ and $\alg A$ are strict; for instance it can be shown that $\Con A$ and $\Con A^-$ are isomorphic \cite{GJKO}.
A residuated lattice $\alg A$ is {\bf weakly well-connected} if $\alg A^-$  is well-connected; we have the following obvious lemma.

\begin{lemma} For a residuated lattice $\alg A$ the following are equivalent:
\begin{enumerate}
\item $\alg A$ is weakly well-connected;
\item $1$ is join irreducible in $\alg A$;
\item for $a,b \in A$ if $(a \meet 1) \join (b \meet 1) =1$, then either $a \ge 1$ or $b \ge 1$.
\end{enumerate}
\end{lemma}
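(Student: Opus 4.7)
The plan is to prove the chain of implications $(1) \Rightarrow (2) \Rightarrow (3) \Rightarrow (1)$, simply by unpacking the definitions and using the fact that the lattice operations in $\alg A^-$ agree with those in $\alg A$.

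For $(1) \Rightarrow (2)$, I would take $a,b \in A$ with $a \join b = 1$. The key observation is that $a \join b \geq a$ forces $a \leq 1$, and symmetrically $b \leq 1$, so both $a$ and $b$ actually lie in $A^-$. Since the lattice operations on $A^-$ agree with those on $A$, we still have $a \join b = 1$ in $\alg A^-$, so by the assumed well-connectedness of $\alg A^-$ either $a = 1$ or $b = 1$; thus $1$ is join irreducible in $\alg A$.

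For $(2) \Rightarrow (3)$, suppose $(a \meet 1) \join (b \meet 1) = 1$. Since $1$ is join irreducible in $\alg A$ by assumption, either $a \meet 1 = 1$ or $b \meet 1 = 1$, which translates directly into $a \geq 1$ or $b \geq 1$.

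For $(3) \Rightarrow (1)$, I take $a,b \in A^-$ with $a \join b \geq 1$ in $\alg A^-$. Because $a,b \leq 1$ we have $a \join b \leq 1$, hence $a \join b = 1$; moreover $a \meet 1 = a$ and $b \meet 1 = b$, so condition (3) gives $a \geq 1$ or $b \geq 1$. Combined with $a,b \leq 1$, this yields $a = 1$ or $b = 1$ in $\alg A^-$, showing that $1$ is join prime in $\alg A^-$, i.e. that $\alg A^-$ is well-connected. Since each step is essentially a matter of bookkeeping with the integral structure of the negative cone, no step presents a genuine obstacle; the only thing to be careful about is the distinction between join prime (the general definition) and join irreducible (which coincides with it at $1$ in the integral setting $\alg A^-$).
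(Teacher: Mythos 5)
Your proof is correct, and it is essentially the argument the paper has in mind: the paper states this lemma without proof (calling it obvious), and your chain $(1)\Rightarrow(2)\Rightarrow(3)\Rightarrow(1)$ is exactly the routine unpacking of the definitions, using that elements under a join equal to $1$ lie in $A^-$, that the lattice operations of $\alg A^-$ agree with those of $\alg A$, and that in the integral negative cone ``$\ge 1$'' means ``$=1$'', so join prime and join irreducible coincide there. No gaps.
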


Clearly if $\alg A$ is well connected, then it is  weakly well-connected and the two concepts coincide in varieties that are $1$-distributive, i.e. satisfy the equation
$$
(x \join y) \meet 1 \le (x \meet 1) \join (y \meet 1).
$$

We want to generalize these concepts;
we say that a residuated lattice $\alg A$ is {\bf ${\bf\Gamma^n}$-connected} if for all $a,b \in A$,  if $\g_1(a) \join \g_2(b) =1$ for all $\g_1,\g_2 \in \Gamma_n(\alg A)$, then either $a \ge 1$ or $b \ge 1$.
To prove the next result we need a lemma.

\begin{lemma}\label{product} Let $\alg A$ be a residuated lattice and let $\vuc am,\vuc bl \in A$; if $a_i \join b_j =1$ for all $i\le l$ and $j \le m$, then
for all $r,s$ and $\vuc dr \sse \{\vuc al\}$ and $\vuc es \sse \{\vuc bm\}$ we have
$$
e_1 \dots e_s \join d_1\dots d_r =1.
$$
\end{lemma}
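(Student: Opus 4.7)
My plan is to prove this by a double induction on the lengths $r$ and $s$, relying on the fact that multiplication distributes over joins in any residuated lattice, i.e.\ $(x\join y)(u\join v) = xu \join xv \join yu \join yv$, together with the hypothesis that every pair $(a_i,b_j)$ sums to $1$. The base case $r=s=1$ is immediate, since $d_1\in\{a_1,\dots,a_m\}$ and $e_1\in\{b_1,\dots,b_l\}$ and so $d_1\join e_1=1$ by assumption.

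First I would establish the auxiliary subcase: for every $b\in\{b_1,\dots,b_l\}$ and every choice $d_1,\dots,d_r\in\{a_1,\dots,a_m\}$, we have $d_1\cdots d_r\join b=1$. The argument is by induction on $r$; assuming $d_2\cdots d_r\join b=1$ (inductive hypothesis) and $d_1\join b=1$ (hypothesis), one multiplies the two equalities and distributes to obtain
$$1 \;=\; (d_1\join b)(d_2\cdots d_r\join b) \;=\; d_1\cdots d_r \,\join\, d_1 b \,\join\, b\,d_2\cdots d_r \,\join\, b^2.$$
Each of the three ``mixed'' summands is then absorbed into $d_1\cdots d_r\join b$; specifically $d_1 b,\ b\,d_2\cdots d_r,\ b^2 \le b$ because all of the relevant elements lie below $1$ (which is the case in the intended application, where the $a_i,b_j$ are iterated conjugates $\g(x)\le1$). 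The symmetric subcase $d\join e_1\cdots e_s = 1$ for a single $d$ is proved in exactly the same way.

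Finally I would carry out the main induction on $s$: given $e_1\cdots e_s\join d_1=1$ (from the symmetric subcase) and $e_1\cdots e_s\join d_2\cdots d_r=1$ (from the subcase applied with the roles of $a$ and $b$ swapped, or by an inner induction on $r$), a second application of the distributivity-and-absorption argument yields $d_1\cdots d_r\join e_1\cdots e_s=1$.

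The main obstacle is precisely the absorption step: in a general residuated lattice one does not have $xy\le y$ or $xy\le x$, so the mixed terms produced after expanding the product need not sit below either $d_1\cdots d_r$ or $b$. Since the lemma is invoked only where the elements $a_i,b_j$ arise as iterated conjugates, and are therefore below $1$ by construction, the absorption goes through; if one wishes to dispense with this implicit assumption, each element should be replaced with its meet with $1$ throughout the computation.
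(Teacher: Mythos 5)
Your induction is correct and is essentially the argument the paper has in mind: it only sketches the proof as a ``simple finite induction'' and points to Lemma 3.2 of Galatos (2004), which is exactly the distribute-and-absorb computation you carry out. One correction, though: the caveat in your last paragraph is unnecessary, and slightly misreads the lemma. The hypothesis $a_i \join b_j = 1$ already forces $a_i \le 1$ and $b_j \le 1$, since each element lies below the join; hence products of the $d$'s and of the $e$'s are $\le 1$ by monotonicity of multiplication, the mixed summands such as $d_1 b$, $b\,d_2\cdots d_r$, $b^2$ are $\le b$, and the absorption step goes through for the lemma exactly as stated --- no appeal to the elements being iterated conjugates, and no replacement of elements by their meets with $1$, is needed.
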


The proof is a simple finite induction using the equations holding in residuated lattices (see Lemma 3.2 in \cite{Galatos2004}).

\begin{lemma}\label{wellconnected} Let $\vv V$ be a variety of residuated lattices that satisfies ($G_{n,n+1}$).
Then every subdirectly irreducible algebra in $\vv V$ is $\Gamma^n$-connected.
\end{lemma}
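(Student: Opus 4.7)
The plan is to argue by contradiction: let $\alg A \in \vv V$ be subdirectly irreducible and suppose $a, b \in A$ satisfy $\alg A \vDash B^n(a, b)$ while $a \not\ge 1$ and $b \not\ge 1$. I will translate the satisfaction of $B^n(a, b)$ into a statement about the principal congruence filters $F_a, F_b$ generated by $\{a\}, \{b\}$ and show $F_a \cap F_b = A^+$, which contradicts the existence of a monolith in $\alg A$.

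The first step is to amplify the hypothesis. Lemma \ref{Glemma}(2) promotes $(G_{n,n+1})$ to $(G_{n,k})$ for all $k \ge n$, so $\alg A \vDash B^k(a, b)$ for every $k \ge n$; the descending direction in Lemma \ref{Glemma}(1) supplies $k < n$. A length-matching device is still needed to compare conjugates of different lengths: since every iterated conjugate lands in the negative cone and $l_1(x) = x \meet 1$ is the identity there, prefixing a shorter conjugate with copies of $l_1$ extends it to any desired length without altering its value. Hence $\g(a) \join \g'(b) = 1$ for \emph{all} iterated conjugates $\g, \g'$ of arbitrary lengths, and Lemma \ref{product} then yields
$$
\g_1(a) \cdots \g_r(a) \join \g'_1(b) \cdots \g'_s(b) = 1
$$
for every finite choice of iterated conjugates on each side.

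Now I would invoke Corollary \ref{prinfil}: the principal congruence filter $F_a$ consists of those elements dominating some product of iterated conjugates of $a$ (with factors from $\{1\}$ absorbed), and analogously for $F_b$. Any $c \in F_a \cap F_b$ then dominates a product on each side, hence their join, which is $1$ by the previous step; combined with the automatic inclusion $A^+ \sse F_a \cap F_b$ this gives $F_a \cap F_b = A^+$. Transported through the lattice isomorphism with $\Con A$ recalled in Section 2, this reads $\th_a \meet \th_b = \D$, where $\th_a, \th_b$ are the congruences corresponding to $F_a, F_b$.

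The assumption $a \not\ge 1$ places $a \meet 1 = l_1(a)$ in $F_a$ but strictly below $1$, so $F_a \supsetneq A^+$ and $\th_a > \D$; symmetrically $\th_b > \D$. Since $\alg A$ is subdirectly irreducible, both $\th_a$ and $\th_b$ must contain the monolith $\mu$, forcing $\D < \mu \le \th_a \meet \th_b = \D$, which is absurd. The main obstacle throughout is the amplification step: the hypothesis $\alg A \vDash B^n(a,b)$ only constrains conjugates of length exactly $n$, whereas the description of $F_a, F_b$ via Corollary \ref{prinfil} and the product closure of Lemma \ref{product} involve conjugates of arbitrary lengths; the observation that $l_1$ acts as the identity on the negative cone is precisely what bridges this gap and makes $(G_{n,n+1})$ strong enough.
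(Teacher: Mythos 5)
Your proof is correct and takes essentially the same route as the paper's: describe the relevant congruence filters via Corollary \ref{prinfil}, amplify $B^n(a,b)$ to all lengths using Lemma \ref{Glemma} together with the $l_1$-padding trick, combine with Lemma \ref{product}, and conclude from the filter--congruence isomorphism and subdirect irreducibility. The only cosmetic difference is that you argue directly with the filters generated by $a$ and $b$ (so you also need the trivial downward direction of Lemma \ref{Glemma}(1)), whereas the paper runs the argument for the filters generated by $\g_1(a),\g_2(b)$ with $\g_1,\g_2\in\Gamma^n$ and then specializes to $\g_1=\g_2=l_1$.
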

\begin{proof} Let $\alg A$ be subdirectly irreducible and let $a,b \in A$ with $\g_1(a) \join \g_2(b) =1$ for all $\g_1,\g_2 \in \Gamma^n(\alg A)$, we will show that then either $a \ge 1$ or $b \ge 1$. Pick any $\g_1,\g_2 \in \Gamma^n(\alg A)$, then if $d \in \op{Fil}_\alg A(\g_{1}(a)) \cap \op{Fil}_\alg A(\g_{2}(b))$ by using the description of the filter generated by
an element in Corollary \ref{prinfil} we get that
$$
\d_1(\g_1(a)) \dots\d_l(\g_1(a))  \join \d'_1(\g_2(b)) \dots \d'_m(\g_2(b)) \le d
$$
for some $\d_i,\d'_j \in \Gamma^k$, for $i = 1 \ldots l, j = 1 \ldots m$ and $k \in \mathbb{N}$. Indeed the iterated conjugates can all be considered of the same (maximum) length $k$.  Let $\e_i = \d_i\g_1$ and $\e'_j = \d'_j\g_2$,  then $\e_i,\e_j \in \Gamma^{n+k}$ for $i = 1 \ldots l, j = 1 \ldots m$. As $\alg A$ satisfies $(G_{n,n+1})$ by Lemma \ref{Glemma} it satisfies $(G_{n,n+k})$ and then
$$
\e_i(a) \join \e'_j(b) = 1\qquad\text{for all $i,j$};
$$
now we apply Lemma \ref{product}, and we get
$$
\e_1(a)\dots \e_l(a) \join \e'_1(b)\dots \e'_m(b) = 1
$$
so $d \ge 1$.
Since $d$ was generic we must have $\op{Fil}_\alg A (\g_1(a)) \cap \op{Fil}_\alg A(\g_2(b)) = A^+$;  but since $\alg A$ is subdirectly irreducible the isomorphism between the congruence lattice and the filter lattice of $\alg A$
forces either $\op{Fil}_\alg A(\g_1(a)) = A^+$ or $\op{Fil}_\alg A(\g_2(b)) = A^+$. This works for any choice of $\g_1,\g_2 \in \Gamma^n$, thus in particular it works for $\g_{1} = \g_{2} = l_{1} \circ \ldots \circ l_{1} = l_{1}$, thus we get that either $a \land 1 \ge 1$ or $b \land 1 \geq 1$, that is, either $a \ge 1$ or $b \ge 1$ as desired. Thus $\alg A$ is $\Gamma^n$-connected.
\end{proof}

Finally let's   complete the connection with logic. Let $\mathcal{L}$ be a substructural logic over $\mathcal{FL}^{+}$; given any two axiomatic extensions $\mathcal{L}_{1}$ and $\mathcal{L}_{2}$ axiomatized by formulas $\phi$ and $\psi$ respectively, for any $n$ Lemma \ref{galatos} implicitly gives a set of formulas $B^n_\mathcal L(\phi, \psi)$ such that $B_\mathcal L(\phi,\psi) = \bigcup_{n\in \mathbb N} B^n_\mathcal L(\phi,\psi)$  axiomatizes the intersection $\mathcal{L}_{1} \cap \mathcal{L}_{2}$, corresponding to the join of the varieties $\vv V_{\mathcal{L}_{1}} \lor \vv V_{\mathcal{L}_{2}}$.
We say that $\mathcal{L}$ is {\bf ${\bf \Gamma^n}$-complete} if for all formulas $\phi$ and $\psi$ which have no variables in common, if $\mathcal{L} \vdash B^n_\mathcal L(\phi, \psi)$  then either $\mathcal{L}\vdash \phi$ or $\mathcal{L} \vdash \psi$.

\begin{theorem} \label{mainwell} Let $\vv V$ be a variety of residuated lattices satisfying ($G_{n,n+1}$) for some $n \in \mathbb{N}$; then the following are equivalent.
\begin{enumerate}
\item $\mathcal L_\vv V$ is $\Gamma^n$-complete;
\item $\vv V$ is join irreducible;
\item $\vv V= \VV(\alg A)$ for some $\Gamma^n$-connected algebra $\alg A$.
\end{enumerate}
\end{theorem}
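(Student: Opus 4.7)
The strategy is to close the cycle (3) $\Rightarrow$ (1) $\Leftrightarrow$ (2) $\Rightarrow$ (3). For (3) $\Rightarrow$ (1), which does not actually use ($G_{n,n+1}$), I would reason by contraposition: assuming $\vv V = \VV(\alg A)$ with $\alg A$ being $\Gamma^n$-connected, and taking $p,q$ in disjoint variables with $\vv V \not\vDash p \geq 1$ and $\vv V \not\vDash q \geq 1$, pick witnesses $\vec a, \vec b \in A$ making $p(\vec a), q(\vec b) \not\geq 1$; then $\Gamma^n$-connectedness of $\alg A$ produces $\g_1, \g_2 \in \Gamma^n(\alg A)$ with $\g_1(p(\vec a)) \join \g_2(q(\vec b)) \neq 1$, which is a failure of $B^n(p,q)$ in $\vv V$. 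The equivalence (1) $\Leftrightarrow$ (2) is the logical translation via Lemma \ref{galatos}: a proper decomposition $\vv V = \vv W \join \vv Z$, with $\vv W, \vv Z$ axiomatized relative to $\vv V$ by $p \geq 1$ and $q \geq 1$ in disjoint variables, is equivalent under ($G_{n,n+1}$) to $\vv V \vDash B^n(p,q)$, which is exactly the negation of $\Gamma^n$-completeness applied to $(p,q)$.

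The substantive direction is (2) $\Rightarrow$ (3). The key finitary observation I would establish is: for any finite collection $F = \{p_1, \ldots, p_k\}$ of inequalities failing in $\vv V$ there is a subdirectly irreducible $\alg A_F \in \vv V$ refuting every $p_i \geq 1$ simultaneously. Indeed, each subvariety $\vv W_i$ axiomatized by $p_i \geq 1$ is proper, and binary join irreducibility lifts by induction to finite joins, so $\bigvee_{i\leq k}\vv W_i$ is still proper in $\vv V$; since $\vv V$ is generated by its subdirectly irreducible members (Birkhoff), some SI algebra $\alg A_F$ must escape $\bigcup_i \vv W_i$, and Lemma \ref{wellconnected} ensures it is automatically $\Gamma^n$-connected.

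To fuse the $\alg A_F$ into a single generator, I would form the ultraproduct $\alg A = \prod_{\mathcal U} \alg A_F$ over the directed index set $I$ of finite subsets of failing inequalities, where $\mathcal U$ is any ultrafilter extending the (FIP) filter generated by the upper sets $\{F' \in I : F \sse F'\}$. For each failing inequality $p \geq 1$, the upper set $\{F' : \{p\} \sse F'\}$ lies in $\mathcal U$ and $\alg A_{F'}$ refutes $p \geq 1$ throughout it, so by \L o\'s's theorem $\alg A$ refutes $p \geq 1$ as well; thus $\vv V = \VV(\alg A)$. For fixed $n$ the property of being $\Gamma^n$-connected is a $\Pi_2$ first-order sentence, because $\Gamma^n$ is, modulo its $n$ parameter variables, a finite set of formal iterated conjugates; hence it is preserved by ultraproducts and $\alg A$ is the desired generator. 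The main obstacle is precisely this last packaging: one must both manufacture the simultaneously-refuting SI algebras $\alg A_F$ from binary join irreducibility and recognize $\Gamma^n$-connectedness as a bona fide first-order property so that the ultraproduct actually transfers it from the $\alg A_F$ to $\alg A$.
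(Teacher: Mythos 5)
Your proposal is correct, and its skeleton coincides with the paper's: Lemma \ref{galatos} drives the equivalence of (1) and (2), $\Gamma^n$-connectedness supplies the failing equation $\g_1(p)\join\g_2(q)\approx 1$ that rules out a proper decomposition, and the substantive direction (2) $\Rightarrow$ (3) is settled by an ultraproduct of subdirectly irreducible algebras, which are $\Gamma^n$-connected by Lemma \ref{wellconnected}, with {\L}o\'s preserving the first-order sentence expressing $\Gamma^n$-connectedness. The genuine difference lies in how the family feeding the ultraproduct is assembled. The paper fixes an arbitrary generator $\alg A$ of $\vv V$, works with its subdirectly irreducible quotients $\alg A/\th_i$, and shows that the failure sets $L_\e$ have the finite intersection property; for that step it invokes Lemma \ref{galatos} once more, taking as the new failing equation the conjunction of the finite set $B^n(p,q)$ axiomatizing the join of the two subvarieties and using join irreducibility to see that this conjunction fails in $\vv V$. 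You instead index by finite sets $F$ of failing inequalities and, for each $F$, extract one subdirectly irreducible $\alg A_F\in\vv V$ refuting them all, using only that a finite join of proper subvarieties of a join irreducible variety is proper together with Birkhoff's theorem, and then run the ultrafilter over the directed index set. This makes your (2) $\Rightarrow$ (3) independent of ($G_{n,n+1}$) except through Lemma \ref{wellconnected}, which is a mild economy over the paper's route; conversely, the paper's version keeps the subdirectly irreducible factors tied to quotients of a prescribed generator. Two small points to make explicit: every equation is $\mathsf{RL}$-equivalent to an inequality $p\ge 1$, so refuting all failing inequalities really does give $\VV(\alg A)=\vv V$; and {\L}o\'s preserves arbitrary first-order sentences, so the precise quantifier complexity of $\Gamma^n$-connectedness is immaterial, only its first-order expressibility (spelled out in Lemma \ref{embed}) matters.
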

 \begin{proof}

 We show first that (1) implies (2). Assume that (2) fails, i.e. $\vv V = \vv W \join \vv Z$ and $\vv V \ne \vv W, \vv Z$. Let $p \ge 1$ be an equation holding in $\vv W$ but not in $\vv Z$ and $q\ge 1$ an equation holding in $\vv Z$ but not in $\vv W$, respectively; if $\vv W',\vv Z'$ are the varieties axiomatized relative to $\vv V$ by $p \ge 1, q \ge 1$ respectively, we have
$$
\vv V = \vv W \join \vv Z \sse \vv W' \join \vv Z' \sse \vv V;
$$
so we may assume that $\vv W,\vv Z$ are axiomatized relative to $\vv V$ by $p \ge 1$ and $q \ge 1$. Let $\f:= p \ge 1$ and $\psi := q \ge 1$; then $\mathcal{L}_\vv W \cap \mathcal{L}_\vv Z =\mathcal{L}_\vv V$ is axiomatized
by $B_{\mathcal L_\vv v}(\f,\psi)$. Hence $\mathcal L_\vv V \vdash B_{\mathcal L_\vv v}(\phi,\psi)$ but neither $\mathcal L_\vv V \vdash \f$ nor $\mathcal L_\vv V \vdash\psi$. Hence $\mathcal L_\vv V $ does not have the  $\Gamma^n$-completeness property.

We now show that (2) implies (1). Assume that (1) fails; hence there exist formulas $\phi$ and $\psi$ with no common variables such that $\mathcal{L}_{\vv V} \vdash B^n_\mathcal L(\phi, \psi)$  but neither $\mathcal{L}_{\vv V}\vdash \phi$ nor $\mathcal{L}_{\vv V} \vdash \psi$.
Then we can consider the two axiomatic extensions of $\mathcal{L}_{\vv V}$, $\mathcal{L}_{1}$ and $\mathcal{L}_{2}$, axiomatized by $\phi$ and $\psi$ respectively. This corresponds to taking two subvarieties of $\vv V$, $\vv W$ and $\vv Z$, defined by identities $p \meet 1\approx 1$ and $q\meet 1\approx 1$ respectively. Since $\vv V$ satisfies ($G_{n,n+1}$), by Lemma \ref{galatos}, $\vv W \lor \vv Z$ is finitely axiomatizable relative to $\vv V$  by $B^n(p,q)$ and the corresponding formula axiomatizing $\mathcal{L}_{1} \cap \mathcal{L}_{2}$ is $B^n_{\mathcal L_{\vv V}}(\phi, \psi)$. But we are assuming that $\mathcal{L}_{\vv V}\vdash B^n_\mathcal{L}(\phi, \psi)$, which implies that $\vv W \lor \vv Z = \vv V$;  thus $\vv V$ is not join irreducible, a contradiction. Hence (2) implies (1), and (1) and (2) are equivalent.

We now show the equivalence of (2) and (3).  Assume (2) and let $\alg A$ be any algebra such that $\vv V = \VV(\alg A)$; let $\{\th_i: \; i\in I\}$ be the set of all congruences of $\alg A$ such that $\alg A/\th_i$ is subdirectly irreducible. Next for any
equation $\e$ in the language of $\vv V$ we define $L_\e = \{i \in I: \alg A/\th_i \not\vDash \e \}$; we claim that $\Delta = \{L_\e: \vv V \not \vDash \e\}$ is a collection of nonempty subsets with the finite intersection property.

First note that if $\vv V \not\vDash \e$ then $\alg A\not\vDash \e$ and hence (by Birkhoff's Theorem) $\alg A/\th_i \not\vDash \e $ for some $i$; therefore  $L_\e \ne \emptyset$.  Next we prove that for any $\e$, $\delta$ such that $\vv V \not\vDash \e,\d$ there is a $\g$ such that $\vv V \not\vDash \g$ and  $L_\g \sse L_\e \cap L_\d$.

Let $\vv W, \vv Z$ be the subvarieties of $\vv V$ axiomatized by $\e$ and $\d$ respectively; since $\vv V \not\vDash \e,\d$ they are both proper subvarieties of $\vv V$. Let $\e$ be $p(\vuc xn) \ge 1$ and
 $\d$ be $q(\vuc ym) \ge 1$; since $\vv V$ satisfies ($G_{n,n+1}$), by Lemma \ref{galatos} $\vv W \join \vv Z$ is axiomatized by the finite set of equations $B^{n}(p,q)$, which contains in particular the equation $(p\meet 1) \join (q \meet 1) \app 1$.

Let $\g$ be the inequality equivalent to the conjunction of the equations in $B^{n}(p,q)$; if $i \notin L_\e$ then $\alg A/\th_i \vDash p \geq 1$, and thus also $\alg A/\th_i \vDash \g_{1}(p) \geq 1$ for any $\g_{1} \in \Gamma^{n}(\alg A/\th_i)$. Hence $\alg A_i/\th \vDash \g_{1}(p) \join \g_{2}(q) \app 1$ for all $\g_{1}, \g_{2} \in \Gamma^{n}$, and those are exactly the equations in $B^{n}(p,q)$, thus $i \notin L_\g$. Since the same can be said for $\d$ it follows by counterpositive that $L_\g \sse L_\e \cap L_\d$.  If $\vv V \vDash \g$, then  $\vv V = \vv W \join \vv Z$, contrary to the hypothesis that $\vv V$ is join irreducible. Hence $\vv V \not\vDash \g$ and so $\Delta$ has the finite intersection property.

So we can take an ultrafilter $U$ on $I$ containing $\Delta$; let $\alg B = \prod_{i\in I} \alg A/\th_i/U$. Since $\alg A/\th_i$ is subdirectly irreducible,  then by Lemma \ref{wellconnected} it is $\Gamma^{n}$-connected. But being $\Gamma^{n}$-connected is a first order property, so $\alg B$ is $\Gamma^{n}$-connected as well and clearly $\alg B \in \VV(\alg A)$. Moreover if an equation $\e$ fails in $\alg A$, then
$L_\e \in \Delta$, so $L_\e \in U$ and so $\alg B \not \vDash \e$.  This proves that $\VV(\alg B) = \VV(\alg A) = \vv V$ and so (3) holds.

Finally the proof that (3) implies (2) is similar to the one of Lemma \ref{KOlemma}. Suppose that $\vv V= \VV(\alg A)$ for some $\Gamma^n$-connected algebra $\alg A$, and that, by way of contradiction, $\vv V$ is not join irreducible, i.e. $\vv V = \vv W  \join \vv Z$ for some proper subvarieties $\vv W$ and $\vv Z$. Then there is an equation $p(\vuc xn)\geq 1$ holding in $\vv W$ but not in $\vv Z$ and an equation $q(\vuc ym) \geq 1$ holding in $\vv Z$ but not in $\vv W$. Clearly neither equation can hold in $\vv V$, so there are $\vuc an,\vuc bm \in A$ with $p(\vuc an) \not \ge 1$ and $q(\vuc bm) \not\ge 1$; since $\alg A$ is $\Gamma^{n}$-connected, there must be $\hat\g_{1}, \hat\g_{2}$, instances in $\alg A$ of iterated conjugates $\g_{1}, \g_{2} \in \Gamma^{n}$, such that $$\hat\g_{1}(p(\vuc an)) \join \hat\g_{2}(q(\vuc bm)) \not= 1.$$
hence $\vv V = \VV(\alg A) \not\vDash \g_{1}(p(\vuc xn)) \join \g_{2}(q(\vuc ym)) \approx 1$. On the other hand, since conjugates of elements in the positive cone of on algebra are also in the positive cone, $\g_{1}(p(\vuc xn)) \approx 1$ holds in $\vv W$ and $\g_{2}(q(\vuc ym)) \approx 1$ holds in $\vv Z$. Thus $\vv W,\vv Z \vDash \g_{1}(p(\vuc xn)) \join \g_{2}(q(\vuc ym)) \approx 1$ and since $\vv V = \vv W \join \vv Z$ we must have that
$\vv V \vDash\g_{1}(p(\vuc xn)) \join \g_{2}(q(\vuc ym)) \approx 1$. This is a contradiction, derived from the assumption that $\vv V$ was not join irreducible. Thus, also (2) and (3) are equivalent and the proof is completed.
\end{proof}

Observe that $\Gamma^0$-connectedness is weak well-connectedness and $\Gamma^0$-completeness is the so called  {\em weak Halld\'en completeness} for a substructural logic $\mathcal L$: if $\mathcal L \vdash (\phi \meet 1) \join (\psi \meet 1)$ then either $\mathcal L \vdash \phi$ or $\mathcal L \vdash \psi$. So our result extends both  Theorem 2.5 and 2.8 in \cite{KiharaOno2008}, giving an  (admittedly less meaningful) logical characterizations of the property of being join irreducible for a variety satisfying ($G_{n,n+1}$) for some $n \in \mathbb{N}$.

\section {Join irreducibility and subdirect irreducibility}

It is a straightforward consequence of Birkhoff's Theorem that if $\vv V$ is any variety of algebras that is strictly join irreducible, then $\vv V=\VV(\alg A)$ for some subdirectly irreducible algebra $\alg A$.
In \cite{KiharaOno2008} the authors asked if the converse might hold for varieties of residuated lattices.  We will see in  Section \ref{SJIbasichoops} below that not all the varieties generated by a subdirectly irreducible basic hoop are strictly join irreducible so in general the converse does not hold even in very friendly varieties.

Next we point out a corollary of Lemma \ref{wellconnected} and Theorem \ref{mainwell}.

\begin{corollary}\label{cor:mainwell} Let $\vv V$ be a variety of residuated lattices satisfying ($G_{n,n+1}$) for some $n \in \mathbb{N}$. If  there is a subdirectly irreducible algebra $\alg A$ with
$\vv V = \VV(\alg A)$, then $\vv V$ is join irreducible.
\end{corollary}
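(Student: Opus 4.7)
The plan is to chain together the two results cited immediately before the corollary, since both do all the heavy lifting. Concretely, suppose we are given a subdirectly irreducible algebra $\alg A$ with $\vv V=\VV(\alg A)$, where $\vv V$ satisfies $(G_{n,n+1})$. Because $\alg A$ belongs to $\vv V$, it inherits the property $(G_{n,n+1})$, so Lemma \ref{wellconnected} applies directly and yields that $\alg A$ is $\Gamma^n$-connected.

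Once $\alg A$ is known to be $\Gamma^n$-connected, condition (3) of Theorem \ref{mainwell} is met for the variety $\vv V$, and the implication (3) $\Rightarrow$ (2) of that theorem immediately gives that $\vv V$ is join irreducible. That completes the argument.

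There is essentially no obstacle here: the statement is literally the composition of Lemma \ref{wellconnected} (subdirectly irreducible $\Rightarrow$ $\Gamma^n$-connected, under $(G_{n,n+1})$) with the implication (3) $\Rightarrow$ (2) of Theorem \ref{mainwell} ($\Gamma^n$-connected generator $\Rightarrow$ join irreducible, again under $(G_{n,n+1})$). The only points worth making explicit in the write-up are that the hypothesis $(G_{n,n+1})$ transfers from $\vv V$ to its member $\alg A$ (so Lemma \ref{wellconnected} is applicable) and that it is the same $n$ in both invocations, so no extra bookkeeping on the conjugate length is required. The proof therefore reduces to a one-line citation of the two previous results.
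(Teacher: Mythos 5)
Your argument is exactly the paper's: the corollary is stated there as an immediate consequence of Lemma \ref{wellconnected} (every subdirectly irreducible member of a variety satisfying $(G_{n,n+1})$ is $\Gamma^n$-connected) combined with the implication (3) $\Rightarrow$ (2) of Theorem \ref{mainwell}, which is precisely the chain you give. The proposal is correct, and your two explicit remarks (that $(G_{n,n+1})$ passes from $\vv V$ to $\alg A$ and that the same $n$ is used in both citations) are harmless clarifications rather than gaps.
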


This is the analogous of Lemma 2.6(2) in \cite{KiharaOno2008} and again the authors asked if it was possible to invert it; we will show that our (more general) version is indeed invertible, thus
answering their question as well. First observe that:

\begin{lemma} \label{embed} Let $\vv V$ be a variety  of residuated lattices. If $\alg A \in \vv V$  is $\Gamma^n$-connected, then there exists a countable  $\Gamma^n$-connected  algebra $\alg B \in \vv V$  with $\VV(\alg A) = \VV(\alg B)$.
\end{lemma}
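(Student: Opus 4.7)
The plan is a standard downward Löwenheim--Skolem argument, after observing that $\Gamma^n$-connectedness is expressible as a single first-order sentence in the signature of residuated lattices. For fixed $n$, each element of $\Gamma^n(\alg A)$ has a ``shape'' determined by a choice of $l$ versus $r$ at each of the $n$ positions (so $2^n$ shapes in total) together with $n$ parameters $\vuc cn \in A$; to each such shape $\sigma$ corresponds an explicit unary term $t_\sigma(x;\vuc cn)$ built from the operations of $\alg A$. Therefore ``$\alg A\vDash B^n(a,b)$'' unfolds into the first-order formula
\[
\forall \vuc cn \, \forall \vuc dn \, \bigwedge_{\sigma,\tau \in \{l,r\}^n} \bigl(t_\sigma(a;\vuc cn) \join t_\tau(b;\vuc dn) \ap 1\bigr),
\]
and $\Gamma^n$-connectedness of $\alg A$ is the $\Pi_2$-sentence obtained by quantifying universally over $a,b$ and demanding that this formula implies $(a \meet 1)\ap a \lor (b \meet 1) \ap b$ (i.e. $a \ge 1 \lor b \ge 1$).

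First I would invoke the downward Löwenheim--Skolem theorem; since the signature of residuated lattices is finite, this produces a countable elementary substructure $\alg B$ of $\alg A$. Because elementary substructures are in particular substructures and the equations axiomatizing $\vv V$ are universal first-order sentences, $\alg B$ is a residuated lattice belonging to $\vv V$. Since $\Gamma^n$-connectedness is a parameter-free first-order sentence and $\alg B$ is elementarily equivalent to $\alg A$, the $\Gamma^n$-connectedness of $\alg A$ transfers to $\alg B$.

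To finish I need $\VV(\alg A) = \VV(\alg B)$. The inclusion $\VV(\alg B) \sse \VV(\alg A)$ is immediate from $\alg B$ being a subalgebra of $\alg A$. For the reverse, an equation $s \ap t$ fails in $\alg A$ precisely when the existential sentence $\exists \vuc xk \, \neg(s(\vuc xk) \ap t(\vuc xk))$ holds in $\alg A$; by elementary equivalence the same existential sentence holds in $\alg B$, so $s \ap t$ fails in $\alg B$ as well. Hence $\alg A$ and $\alg B$ satisfy exactly the same equations, and therefore generate the same variety.

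The only genuine obstacle is the bookkeeping needed to verify that $\Gamma^n$-connectedness really is expressible by a single first-order sentence, since the naive definition quantifies over ``all iterated conjugates in $\alg A$'' and one must replace this apparent quantification over a set of terms by a finite disjunction of patterns with ordinary element-level quantification. Once that is in place, everything else is a textbook application of Löwenheim--Skolem model theory to the finite-signature variety $\mathsf{RL}$.
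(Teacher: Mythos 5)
Your proof is correct and follows essentially the same route as the paper: express $\Gamma^n$-connectedness (together with the axioms of the variety) as first-order sentences — the finitely many conjugate shapes with universally quantified parameters — apply downward L\"owenheim--Skolem to get a countable $\Gamma^n$-connected $\alg B$ elementarily equivalent to $\alg A$, and conclude $\VV(\alg A)=\VV(\alg B)$ since elementarily equivalent algebras satisfy the same equations. One small slip: $(a\meet 1)\ap a$ expresses $a\le 1$, not $a\ge 1$; the conclusion of your sentence should be $(a\meet 1)\ap 1$ or $(a\join 1)\ap a$, as your parenthetical intends.
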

\begin{proof} Suppose that $\alg A$ is $\Gamma^n$-connected; it is clear that the class of $\Gamma^n$-connected  in $\VV(\alg A)$ is axiomatized by a set of first-order sentences, namely the
axioms defining $\VV(\alg A)$ and the universal sentence
$$
\forall x \forall y\, [\&B^n(x,y) \Rightarrow ((x \ge 1) \ \text{or}\ (y \ge 1))].
$$
Where  by $\&B^n(x,y)$ we mean the conjunction of the equations in
$B^{n}(x,y) = \{ \gamma_{1}(x) \join \gamma_{2}(y) \app  1 : \gamma_{1}, \gamma_{2} \in \Gamma^{n} \}$.
The downward version of the L\"owenheim-Skolem Theorem implies that if  $\alg A$  is  $\Gamma^n$-connected, then there is also a countable $\Gamma^n$-connected algebra $\alg B$ that is elementarily equivalent to $\alg A$.
In particular $\alg A$ and $\alg B$ must satisfy the same equations, so $\VV(\alg A) =\VV(\alg B)$.
\end{proof}

In \cite{AguzzoliBianchi2017} the authors proved an interesting result about $\mathsf{FL}_{ew}$-algebras: if a variety is generated by a well-connected algebra, then it is also generated by a subdirectly irreducible algebra, which implies that Lemma 2.6(2) in \cite{KiharaOno2008} can indeed be inverted.
By looking at their proofs it is clear that they do not  rely on zero-boundedness and commutativity. Generalizing it for non-integral structures takes a little more work and we need to clarify the context. If $\alg A$ is a residuated lattice
we can define a set of equations $D_\alg A$, called the {\bf diagram} of $\alg A$ in the following way: for every $a \in A$ we pick a variable $v_a$ and
$$
D_\alg A = \{v_a * v_b \leftrightarrow v_{a *b} \geq 1: a,b \in A, *\in\{\join,\meet,\imp,\lr,\rr\}\},
$$
where $a \leftrightarrow b = (a \backslash b) \land (b \backslash a)$.
It is clear that the diagram encodes the operation table of $\alg A$; moreover if $\alg A$ is countable then all the terms involved are elements of the free algebra over $\o$ generators of any variety to which $\alg A$ belongs.

So if $\alg A \in \vv V$, we can use the semantical consequence relation $\vDash$ on $\vv V$ and the expression $D_\alg A \vDash p\geq 1$ makes sense for any term $p$ in the language of residuated lattices. Likewise
$\alg B \in \vv V$ is a model of $D_\alg A\vDash p \geq 1$ if any homomorphism
$h : \alg F_\vv V (\omega) \longrightarrow \alg B$, say $h(v_a) = c_a$, such that
$c_a *c_b = c_{a*b}$ for all $a,b \in A$, is also such that $h(p) \geq 1$.  Now it is straightforward to check that
the class of models of $D_\alg A \vDash p \geq 1$ is closed under subalgebras and direct products; it follows that if  there is a countermodel in $\vv V$ of $D_\alg A \vDash p\geq 1$, then there is a subdirectly irreducible countermodel. This circle of ideas is the core of the following lemma.

\begin{lemma}\label{agbi} Let $\vv V$ be any variety of residuated lattices and let $\alg A$ be a countable $\Gamma^{n}$-connected algebra in $\vv V$; then there exists a subdirectly irreducible algebra $\alg B \in \vv V$ such that
$\alg A \in \SU(\alg B)$.
\end{lemma}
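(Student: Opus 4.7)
The approach I would take follows the diagram technique outlined in the paragraph preceding the statement, followed by a collapsing argument driven by the $\Gamma^{n}$-connectedness of $\alg A$.

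First, for each pair $(a,b) \in A \times A$ with $a \neq b$, the algebra $\alg A$ itself witnesses that the entailment $D_\alg A \vDash v_a \leftrightarrow v_b \ge 1$ fails in $\vv V$ (since $a \neq b$ is equivalent to $a\leftrightarrow b \not\ge 1$ in any residuated lattice). Because the class of counter-models of this entailment inside $\vv V$ is closed under $\SU$ (restrict to the subalgebra generated by the interpreted constants $\{c_x : x \in A\}$) and under $\PP$, applying Birkhoff's subdirect representation to a sub-counter-model produces a subdirectly irreducible counter-model $\alg B_{a,b} \in \vv V$: indeed, if the projections to all SI factors satisfied $v_a \leftrightarrow v_b \ge 1$, the same would hold in the subdirect product. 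The evaluation $x \mapsto c_x$ then gives a homomorphism $h_{a,b} : \alg A \to \alg B_{a,b}$ with $h_{a,b}(a) \neq h_{a,b}(b)$.

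Second, I would assemble these into the product $\alg C = \prod_{(a,b),\, a \neq b} \alg B_{a,b}$. The diagonal $\phi(x) = (h_{a,b}(x))_{(a,b)}$ is a homomorphism $\alg A \to \alg C$, and it is injective because the coordinate $(x,y)$ already separates any given $x\neq y$. Thus $\alg A \hookrightarrow \alg C$, but $\alg C$ is not subdirectly irreducible. To cut it down, I would apply Zorn's lemma to the poset of congruences on $\alg C$ whose restriction to $\phi(A)$ is trivial, obtaining a maximal element $\Theta$, and set $\alg B := \alg C/\Theta$; the embedding $\alg A \hookrightarrow \alg B$ persists by construction, and every nontrivial congruence of $\alg B$ pulls back to a congruence strictly above $\Theta$ which, by maximality, must identify some pair from $\phi(A)$.

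The main obstacle is showing that $\alg B$ is actually subdirectly irreducible, not merely that every nontrivial congruence nontrivially touches $\phi(A)$. Here the countability of $\alg A$ and its $\Gamma^{n}$-connectedness play their essential role. Using the description of principal congruence filters via iterated conjugates (Corollary \ref{prinfil}) and computations in the spirit of Lemma \ref{wellconnected}, one argues that for any two nontrivial congruences $\theta_{1}, \theta_{2}$ on $\alg B$---each of which identifies a pair $(\phi(x_i), \phi(y_i))$ with $x_i \neq y_i$---$\Gamma^{n}$-connectedness forces the iterated-conjugate descriptions of the associated congruence filters $\mathbf{F}(x_1\leftrightarrow y_1)$ and $\mathbf{F}(x_2\leftrightarrow y_2)$ in $\alg A$ to share a common element strictly above $1$, which then projects to a common nontrivial element of $\theta_1 \cap \theta_2$ in $\alg B$. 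Consequently the congruence lattice of $\alg B$ has a unique minimum nontrivial element, so $\alg B$ is subdirectly irreducible, completing the proof. The delicate point, where most of the work lies, is this last verification that $\Gamma^{n}$-connectedness indeed guarantees such a common witness across any two congruence filters arising this way.
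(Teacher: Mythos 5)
Your construction is fine up to and including the Zorn step, and the use of $\Gamma^{n}$-connectedness you sketch can indeed be made precise --- but only up to a point, and the point where it stops is exactly the step you yourself flag as ``where most of the work lies''. What the argument delivers is this: by maximality of $\Theta$, every nontrivial congruence of $\alg B=\alg C/\Theta$ collapses a pair of $\phi(A)$, so its congruence filter contains some $\phi(c)$ with $c\not\ge 1$ in $A$; given two nontrivial congruences with witnesses $c_1,c_2$, the contrapositive of $\Gamma^{n}$-connectedness yields $\gamma_1,\gamma_2\in\Gamma^{n}$ with $\gamma_1(c_1)\vee\gamma_2(c_2)\not\ge 1$, and the $\phi$-image of this element lies in both congruence filters (note that what you need is a common element that is \emph{not} above $1$: every congruence filter contains everything above $1$, so ``a common element strictly above $1$'' proves nothing). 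This shows that any two nontrivial congruences of $\alg B$ have nontrivial meet, i.e.\ $\alg B$ is \emph{finitely} subdirectly irreducible. The genuine gap is the jump from this to ``the congruence lattice of $\alg B$ has a unique minimum nontrivial element'': pairwise nontrivial meets do not produce a monolith. Finite subdirect irreducibility is strictly weaker than subdirect irreducibility (any nontrivial totally ordered residuated lattice is finitely subdirectly irreducible, yet a dense chain such as the standard MV-chain on $[0,1]$ has no monolith). For subdirect irreducibility you would need a single element below $1$ (equivalently a single pair from $\phi(A)$) lying in \emph{every} nontrivial congruence filter of $\alg B$, and nothing in the proposal --- neither the maximality of $\Theta$ nor pairwise applications of $\Gamma^{n}$-connectedness, which cannot be iterated uniformly over the possibly uncountably many nontrivial congruences --- supplies such a uniform witness. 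The countability of $\alg A$, which you invoke at this point, is never actually used and does not repair this.

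The paper's proof is built precisely to solve this uniformization problem, by a different device. It forms the \emph{directed} family $\Psi$ of equations that fail in $\alg A$ because of $\Gamma^{n}$-connectedness, introduces a fresh variable $v$ together with the premises $p\backslash v\ge 1$ for $p\approx 1\in\Psi$, and shows by compactness (directedness guarantees that any finite subfamily of $\Psi$ has an upper bound inside $\Psi$, which already fails in $\alg A$) that $D'=D_\alg A\cup\{p\backslash v\ge 1\}$ does not entail $v\ge 1$. A countermodel of this \emph{single} entailment has a subdirectly irreducible quotient that is still a countermodel --- subdirect irreducibility comes for free here, because only the one conclusion $v\ge 1$ must keep failing in some subdirect factor --- and the assignment $a\mapsto h(v_a)$ into that subdirectly irreducible countermodel is injective because $h$ must falsify every member of $\Psi$, in particular those forcing $h(v_c)\not\ge 1$ whenever $c\not\ge 1$. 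In short, the missing idea is the packaging of infinitely many required failures into one conclusion via the fresh variable $v$; without it (or some substitute playing the same role), your quotient $\alg C/\Theta$ need not be subdirectly irreducible, and the lemma is not proved.
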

\begin{proof}
Let $\alg A$ be a countable $\Gamma^{n}$-connected algebra in $\vv V$ and let $V=\{v_a: a \in A\}$ be the set of variables defining the diagram $D_\alg A$.

Now we define a set of equations $\Psi$ not valid in $\alg A$ because of its $\Gamma^{n}$-connectedness. In particular, in $\Psi$ we write the meets of all possible joins of iterated conjugates of length $n$ of $m$ given variables, for each $m \in \mathbb{N}$. Notice that given $m \in \mathbb{N}$, such joins are in a finite number. As argued in \cite{Galatos2004} (p. 232), one can enumerate the indices of the iterated conjugates of a certain length and they will form an initial segment of the natural numbers. Let us call $\Gamma^{n}_{m}$ the finite set of all possible $m$-uples of iterated conjugates of length $n$, and $J_{m}$ the enumeration of the $m$-uples of indices of the conjugates in $\Gamma^{n}_{m}$. That is, each ${\bf j} \in J_{m}$ is of the kind ${\bf j} = (j_{1}, \ldots j_{m})$ and identifies an $m$-uple $(\gamma_{j_{1}}, \ldots, \gamma_{j_{m}}) \in \Gamma^{n}_{m}$. Then
\begin{equation*}
\begin{aligned}
\Psi = \{\bigwedge_{{\bf j} \in J_{m}} (\gamma_{j_{1}}(v_{a_{1}}) \lor \ldots \lor \gamma_{j_{m}}(v_{a_{m}})) \app 1&: \;m \in \mathbb{N}, (\g_{j_{1}}, \ldots, \g_{j_{m}}) \in \Gamma^{n}_{m},\\ &a_{i} \in A, a_{i} \not\geq 1, \mbox{ for } i = 1 \ldots m\}.
\end{aligned}
\end{equation*}
Notice that $\Psi$ is \emph{directed} in the following  sense:
$$ \mbox{if } \bigwedge_{j \in J} p_{j} \app 1 \in \Psi \mbox{ and}\bigwedge_{k \in K} q_{k} \app 1 \in \Psi, \;\;\mbox{ then }\;\; \bigwedge_{j \in J}\bigwedge_{k \in K} p_{j} \lor q_{k} \app 1 \in \Psi.$$
We call $\bigwedge_{j \in J}\bigwedge_{k \in K} p_{j} \lor q_{k} \app 1$ the \emph{upper bound} of  $\bigwedge_{j \in J} p_{j} \app 1$ and $ \bigwedge_{k \in K} q_{k} \app 1$, since both $$\bigwedge_{j \in J} p_{j} \leq \bigwedge_{j \in J}\bigwedge_{k \in K} p_{j} \lor q_{k} \mbox{ and } \bigwedge_{k \in K} q_{k} \leq \bigwedge_{j \in J}\bigwedge_{k \in K} p_{j} \lor q_{k} .$$ Moreover, $D_\alg A \not\models p \app 1$ for each $p \app 1 \in \Psi$, since $\alg A$ naturally gives a countermodel. Indeed,  with the assignment $g: \alg F_\vv V (\omega) \longrightarrow \alg A$ such that $g(v_{a}) = a$ for all $a \in A$, $\alg A$  satisfies all inequalities in $D_\alg A$ by construction, but it does not satisfy any of the equations $p \app 1 \in \Psi$: if $ p= \bigwedge_{j \in J} (\gamma_{j_{1}}(v_{a_{1}}) \lor \ldots \lor \gamma_{j_{m}}(v_{a_{m}})) \app 1$ is satisfied with respect to the assignment $g$, then all of the meetands $(\gamma_{j_{1}}(a_{1}) \lor \ldots \lor \gamma_{j_{m}}(a_{m})) = 1$, for all $(\g_{j_{1}}, \ldots, \g_{j_{m}}) \in \Gamma^{n}_{m}$,  and thus at least one of the elements $a_{i}\geq 1$, since $\alg A$ is $\Gamma^{n}$-connected. But by the definitions of $\Psi$ and $g$, $a_{i}\not\geq 1$ for all $i = 1 \ldots m$, a contradiction.

Let\footnote{This part of the proof is essentially the same as in Lemma 3.4 of \cite{CEGGMN}, which we rewrite in this more general setting.} $v$ be a variable different from all the $v_a$, and let
$$
D'= D_\alg A \cup\{p \rr v\ge 1: p \approx 1 \in \Psi\}.
$$
We claim that  $D' \not\models v \geq 1$. Suppose that $D' \models v \geq 1$, then by compactness there is a finite subset $D'' \subseteq D'$ such that $D'' \models v \geq 1$; let  $r \app 1$ be the upper bound of the finite set of equalities $\{p \approx 1 \in \Psi: p \rr v \geq 1 \in D'' \}$ (such upper bound exists as $\Psi$ is directed). Since $D_\alg A \not\models \psi$ for each $\psi \in \Psi$, in particular $D_\alg A \not\models r \app 1$;  so there is an algebra $\alg C \in \vv V$
and a homomorphism $k: \alg F_\vv V(\omega) \longrightarrow \alg C$ such that all identitities in $D_\alg A$ are satisfied in $\alg C$ but $k(r) \neq 1$, or equivalently $k(r) \not\ge 1$.  Now if we extend $k$ by setting $k(v) = k(r)$ then
$\alg C$ becomes a countermodel of  $D' \vDash v \geq 1$ as well, a contradiction. Thus $D' \not\models v \geq 1$. It follows that there is a subdirectly irreducible algebra $\alg B \in \vv V$ that is a countermodel of $D'\vDash v \geq 1$, with respect to an homomorphism $h : \alg F_\vv V (\omega) \longrightarrow \alg B$.

Observe that, since $D_\alg A \sse D'$,  $\alg B$ also satisfies all inequalities in $D_\alg A$ with respect to $h$, but  $h(p) \ne 1$ for all $p \approx 1 \in \Psi$. Indeed if it were $h(p) = 1$ it would follow that $h(v) \geq h(p) h(p \backslash v) \geq 1$, a contradiction.

Let's define $f: A \to B$ as $f(a) = h(v_{a})$, and show that it is an ambedding, which would settle the proof. Since  $\alg B$ satisfies $D_\alg A$ with respect to $h$, $f$ is a homomorphism.  To show that $f$ is injective suppose that $f(a) = f(b)$ given $a, b \in A$; then by residuation $1 \leq f(a \backslash b), 1 \leq f(b\backslash a)$.
Note that if $c \not\geq 1$, then $v_{c} \meet 1 \approx 1 \in \Psi$, thus $f(c)= h(v_{c}) \not\geq 1$; in fact if $h(v_c) \ge 1$ we would have $h(v_c \meet 1) = h(v_c) \meet 1 = 1$, a contradiction, as $\alg B$ satisfies no equality in $\Psi$ with respect to $h$. This implies that $1 \leq a \backslash b$ and $1 \leq b \backslash a$, thus $a = b$ and the proof is completed.

\end{proof}

\begin{theorem}\label{main3} Let $\vv V$ be a variety of residuated lattices that satisfies  ($G_{n,n+1}$) for some $n \in \mathbb{N}$; if $\vv V$ is join irreducible, then there is a subdirectly irreducible algebra $\alg B \in \vv V$ such that
$\VV(\alg B) = \vv V$.
\end{theorem}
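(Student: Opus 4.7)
The plan is to chain together the three results immediately preceding the theorem, each of which does one step of the reduction from join irreducibility to being generated by a subdirectly irreducible algebra. The three ingredients are: Theorem \ref{mainwell} (join irreducibility $\Rightarrow$ generated by a $\Gamma^n$-connected algebra), Lemma \ref{embed} (we may take this generator countable), and Lemma \ref{agbi} (a countable $\Gamma^n$-connected algebra embeds into a subdirectly irreducible member of $\vv V$). Because Lemmas \ref{embed} and \ref{agbi} explicitly require the $\Gamma^n$-connected hypothesis but no further work on join irreducibility, essentially everything has already been done and the proof is a short assembly.

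First I would assume $\vv V$ is join irreducible and apply the implication (2)$\Rightarrow$(3) of Theorem \ref{mainwell}, which is available precisely because $\vv V$ satisfies $(G_{n,n+1})$. This gives a $\Gamma^n$-connected algebra $\alg A \in \vv V$ with $\VV(\alg A) = \vv V$. Next, invoke Lemma \ref{embed} to replace $\alg A$ by a countable $\Gamma^n$-connected algebra still generating $\vv V$; we can keep calling it $\alg A$. Then apply Lemma \ref{agbi} to this countable $\Gamma^n$-connected $\alg A$, obtaining a subdirectly irreducible $\alg B \in \vv V$ with $\alg A \in \SU(\alg B)$.

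To close the argument I would just verify the two inclusions giving $\VV(\alg B) = \vv V$. On the one hand $\alg B \in \vv V$ so $\VV(\alg B) \sse \vv V$. On the other hand, $\alg A \in \SU(\alg B) \sse \VV(\alg B)$, hence $\vv V = \VV(\alg A) \sse \VV(\alg B)$. Combining, $\VV(\alg B) = \vv V$, and since $\alg B$ is subdirectly irreducible in $\vv V$, we are done.

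There is essentially no obstacle left at this stage, since all the technical content has been absorbed into the three lemmas cited above. The only point worth double-checking in writing is that Lemma \ref{agbi} really applies in the stated generality (no integrality or commutativity hypothesis), and that the hypothesis $(G_{n,n+1})$ is used only at the very first step (through Theorem \ref{mainwell}); the downstream reduction to a subdirectly irreducible algebra is unconditional given $\Gamma^n$-connectedness. Once these are confirmed, the write-up reduces to three sentences plus the inclusion check.
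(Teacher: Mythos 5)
Your proposal is correct and follows essentially the same route as the paper's proof: apply Theorem \ref{mainwell} (2)$\Rightarrow$(3) to get a $\Gamma^n$-connected generator, use Lemma \ref{embed} to make it countable, then Lemma \ref{agbi} to embed it in a subdirectly irreducible $\alg B \in \vv V$, and conclude $\VV(\alg B)=\vv V$ from the two inclusions. Nothing is missing.
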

\begin{proof}  Since $\vv V$ is join irreducible and satisfies  ($G_{n,n+1}$), by Theorem \ref{mainwell} there is a $\Gamma^{n}$-connected algebra $\alg A$ with $\VV(\alg A) = \vv V$. By Lemma \ref{embed} we may assume without loss of generality that $\alg A$ is countable; hence by Lemma \ref{agbi} there is a subdirectly irreducible $\alg B \in \vv V$ with $\alg A \in \SU(\alg B)$.
Clearly $\vv V = \VV(\alg A) =\VV(\alg B)$ and the thesis holds.
\end{proof}

Thus combining Theorem \ref{main3} and Corollary \ref{cor:mainwell} we obtain the following.
\begin{corollary}\label{main2} Let $\vv V$ be a variety of residuated lattices that satisfies  ($G_{n,n+1}$) for some $n \in \mathbb{N}$. Then $\vv V$ is join irreducible if and only if there is a subdirectly irreducible algebra $\alg A \in \vv V$ such that
$\VV(\alg A) = \vv V$.
\end{corollary}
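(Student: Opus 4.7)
The plan is simply to combine the two results already established in the excerpt: Theorem \ref{main3} handles the forward direction and Corollary \ref{cor:mainwell} handles the backward direction. No new machinery is needed.

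For the forward implication, suppose $\vv V$ is join irreducible and satisfies ($G_{n,n+1}$). Then Theorem \ref{main3} directly gives a subdirectly irreducible algebra $\alg A \in \vv V$ with $\VV(\alg A) = \vv V$. Note that this is the non-trivial direction; it was obtained by first applying Theorem \ref{mainwell} to pass from join irreducibility to the existence of a $\Gamma^n$-connected algebra generating $\vv V$, then shrinking it to a countable one via Lemma \ref{embed}, and finally embedding it into a subdirectly irreducible algebra in $\vv V$ via the diagram argument of Lemma \ref{agbi}.

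For the converse, suppose there exists a subdirectly irreducible $\alg A \in \vv V$ with $\VV(\alg A) = \vv V$. Since $\vv V$ satisfies ($G_{n,n+1}$), Corollary \ref{cor:mainwell} (itself a direct application of Lemma \ref{wellconnected}, which shows $\alg A$ must be $\Gamma^n$-connected, together with the implication (3) $\Rightarrow$ (2) of Theorem \ref{mainwell}) yields that $\vv V$ is join irreducible.

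Since there is no obstacle — both halves are formal consequences of results proved in the preceding sections — the proof is essentially a one-sentence citation of Theorem \ref{main3} and Corollary \ref{cor:mainwell}.
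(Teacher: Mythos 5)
Your proposal is correct and matches the paper's own argument exactly: the paper derives Corollary \ref{main2} by combining Theorem \ref{main3} (the forward direction) with Corollary \ref{cor:mainwell} (the converse). Nothing further is needed.
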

Of course the same results we presented in this section hold for varieties of $\mathsf{FL}$-algebras. In particular then, the previous corollary applies to all varieties satisfying $(G)$, thus for example: normal varieties of $\mathsf{RL}$ and $\mathsf{FL}$, and thus $\mathsf{CRL}$, $\mathsf{CIRL}$, $\mathsf{FL}_{e}$, $\mathsf{FL}_{ew}$ (answering then to Kihara and Ono question); representable subvarieties of $\mathsf{RL}$ and $\mathsf{FL}$. Moreover, the theorem also applies to subvarieties of $\ell$-groups, since they satisfy ($G_{1,2}$).

\section{Representable varieties}\label{repr}

A residuated lattice is {\bf representable} if it is a subdirect product of totally ordered residuated lattices; a variety is representable if each of its members is representable, which is equivalent to say that each subdirectly irreducible member of the variety is totally ordered. Being representable for a variety is equationally definable; indeed for any $\vv V \in \mathsf{RL}$, $\vv V$ is representable if and only if $\vv V \vDash u\back((x \lor y)\back x)u \lor v((x \lor y)\back y)/v$ (see \cite{JipsenTsinakis2002}, \cite{BlountTsinakis2003}).

A  residuated lattice is {\bf prelinear} if it satisfies
$$
(x \lr y) \join (y \lr x) \ge 1.
$$
A variety of residuated lattices is prelinear if each of its members is prelinear.
Since any totally ordered residuated lattice is clearly prelinear, any representable variety is prelinear.

It is well-known that any representable variety satisfies (G) (see for instance Theorem 6.7 in \cite{BlountTsinakis2003}) so we may apply Theorem \ref{mainwell} to conclude that
that each join irreducible subvariety is generated by a single weakly well-connected algebra. Moreover, since clearly the underlying lattices of any representable residuated lattice is distributive
(as a subdirect product of chains) the concepts of well-connected and weakly well-connected coincide.
Moreover the following holds.

\begin{lemma}\label{prelinear} Let $\vv V$ be a prelinear variety of residuated lattices and let $\alg A \in \vv V$. Then the following are equivalent:
\begin{enumerate}
\item $\alg A$ is well-connected;
\item $\alg A$ is totally ordered.
\end{enumerate}
\end{lemma}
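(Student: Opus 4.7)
The plan is to verify both directions directly, the easy one from the chain structure and the harder one by combining well-connectedness with the prelinearity inequality.

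For (2) $\Rightarrow$ (1), I would argue as follows. Suppose $\alg A$ is totally ordered and pick $a,b \in A$ with $a \join b \ge 1$. In a chain the join is just the maximum, so $a \join b \in \{a,b\}$; hence either $a \ge 1$ or $b \ge 1$, which is exactly the definition of well-connectedness. No further work is needed for this direction.

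For (1) $\Rightarrow$ (2), I would use prelinearity to reduce totality to a well-connectedness instance. Fix $a,b \in A$. Since $\vv V$ is prelinear, $\alg A$ satisfies $(a \lr b) \join (b \lr a) \ge 1$. By hypothesis $1$ is join prime in $\alg A$, so either $a \lr b \ge 1$ or $b \lr a \ge 1$. Using the residuation adjunction $x \le z \lr y \Longleftrightarrow x \cdot y \le z$, the inequality $1 \le a \lr b$ gives $b = 1 \cdot b \le a$, and symmetrically $1 \le b \lr a$ gives $a \le b$. Thus $a$ and $b$ are comparable; as they were arbitrary, $\alg A$ is totally ordered.

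The only real pitfall is keeping track of the convention for the two divisions $\lr,\rr$ so that the residuation step goes in the right direction; since prelinearity is stated in terms of $\lr$ and the adjunction is explicitly $x \le z \lr y$ iff $x y \le z$, the argument works uniformly without needing the commutative or integral assumption. Note also that this proof does not invoke any of the machinery of iterated conjugates developed earlier; prelinearity is strong enough to collapse $\Gamma^n$-conjugate talk down to the bare join-primeness of $1$.
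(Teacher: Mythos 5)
Your proposal is correct and follows essentially the same route as the paper: the forward direction is immediate since in a chain $a \join b \in \{a,b\}$, and the converse applies join-primeness of $1$ to the prelinearity instance $(a \lr b) \join (b \lr a) \ge 1$ and then residuates, exactly as in the paper's proof. Your explicit check that $1 \le a \lr b$ yields $b \le a$ via $x \le z \lr y \Leftrightarrow xy \le z$ is just a spelled-out version of the step the paper leaves implicit, and it is carried out with the correct orientation of the division.
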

\begin{proof}
The implication (2) implies (1) clearly follows from the definition of well-connectedness. Suppose now that $\alg A$ is prelinear and well-connected. since it is prelinear for $a,b \in A$, $
(a \lr b) \join (b \lr a) \ge 1.$
Since it is well connected, either $a\lr b \ge 1$ or $b \lr a \ge 1$; so either $b\le a$ or $a \le b$ and $\alg A$ is totally ordered.
\end{proof}

If the variety we are dealing with is also integral, then we can use ordinal sums, of which we recall the definition for the reader's convenience.

Let $(I, \leq)$ be a totally ordered set. For all $i \in I$ let ${\bf A}_{i}$ be a totally ordered integral residuated ($\land$-semi)lattice, such that for $i \neq j$, $A_{i} \cap A_{j} = \{1\}$. Then $\displaystyle\bigoplus_{i \in I}{\bf A}_{i}$, the {\em ordinal sum} of the family $({\bf A}_{i})_{i \in I}$,  is the structure whose base set is $\bigcup_{i \in I}A_{i}$ and the operations are defined as follows:
$$
\begin{array}{lll}
x \to y &=&\left\{
\begin{array}{ll}
x \to^{A_{i}} y &\mbox{ if } x, y \in A_{i},\\
y & \mbox{ if } x \in A_{i} \mbox{ and } y \in A_{j} \mbox{ with } i > j\\
1 & \mbox{ if } x \in A_{i}\setminus\{1\} \mbox{ and } y \in A_{j} \mbox{ with } i < j.
\end{array}
\right.\\
&&\\
x \cdot y &=&\left\{
\begin{array}{ll}
x \cdot^{A_{i}} y &\mbox{ if } x, y \in A_{i},\\
y & \mbox{ if } x \in A_{i} \mbox{ and } y \in A_{j}\setminus\{1\} \mbox{ with } i > j\\
x & \mbox{ if } x \in A_{i}\setminus\{1\} \mbox{ and } y \in A_{j} \mbox{ with } i < j.
\end{array}
\right.\\
\end{array}\\
$$
It is easily seen that join and meet are defined by the ordering obtained by stacking the various algebras one over the other. Moreover we call {\bf sum-irreducible} an algebra that is not isomorphic to an ordinal sum of nontrivial components.
We recall that:
\begin{theorem} (\cite{Agliano2018b}, Theorem 3.2) Any integral residuated semilattice is the ordinal sum of sum-irreducible residuated semilattices.
\end{theorem}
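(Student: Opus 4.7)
The plan is to decompose the algebra $\alg A$ (which, from the surrounding representable setting, we may take to be totally ordered) by identifying maximal sum-irreducible intervals via an equivalence relation on $A \ssm \{1\}$. The guiding observation is that in an ordinal sum $\bigoplus_{i \in I} \alg A_i$, when $a \in A_i \ssm \{1\}$ and $b \in A_j$ with $i < j$, the residuals collapse: $b \rr a = a$ and $b \lr a = a$; whereas when $a \leq b$ lie in a common component, the residuals are computed internally and typically strictly exceed $a$. Accordingly, I would define a reflexive, symmetric relation $R$ on $A \ssm \{1\}$ by $a R b$ iff, assuming $a \leq b$, either $b \rr a > a$ or $b \lr a > a$, and let $\sim$ denote its transitive closure.

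For each equivalence class $C$ of $\sim$, I would show that $\alg A_C := \la C \cup \{1\}, \wedge, \cdot, \lr, \rr, 1 \ra$ is a subalgebra of $\alg A$. Closure under $\wedge$ and $\cdot$ follows from integrality together with a short case analysis: any product or meet of two elements of $C$ is bounded above by the operands themselves, and if it dropped into a different class it would violate the collapse behavior encoded by $R$. Closure under the residuals is the delicate point: one argues that if $c := a \rr b$ for $a, b \in C$ belonged to a distinct class $D$, then testing $R$ on $c$ against elements of $C$ via the chains of residuals that $c$ witnesses would force $\sim$ to identify $C$ and $D$, a contradiction. Then I would verify that $I := (A \ssm \{1\})/{\sim}$ inherits a strict linear order from $A$ by setting $C < D$ iff every element of $C$ lies strictly below every element of $D$, and that each $\alg A_C$ is sum-irreducible, since any further ordinal-sum decomposition would produce a pair inside $C$ whose residuals collapse, contradicting $\sim$-equivalence.

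The isomorphism $\alg A \cong \bigoplus_{i \in I} \alg A_i$ would then follow by a direct operation-by-operation check: within a single class all operations are inherited by construction, and between distinct classes the collapse rules of the ordinal sum definition are forced by $\neg (a R b)$. The main obstacle will be the closure of $C \cup \{1\}$ under the residuals, which requires the iterative argument sketched above, leveraging both integrality and the transitive-closure structure of $\sim$. A secondary subtlety is handling the potentially infinite, dense, or gap-filled index set $I$: coherently assembling the components into a single ordinal sum over an arbitrary chain may require a Zorn-type or transfinite argument to ensure the decomposition is globally well-defined rather than only finitely so.
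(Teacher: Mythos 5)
First, a point of reference: the paper does not prove this statement at all --- it is imported verbatim from \cite{Agliano2018b} (Theorem 3.2) --- so your attempt can only be judged against the statement itself and the argument it encodes. For \emph{totally ordered} algebras your plan is essentially the standard one (the same scheme used for BL-chains in \cite{AglianoMontagna2003} and for chains in \cite{Agliano2018b}): relate two elements when their residuals do not collapse, take the transitive closure, show each class together with $1$ is a convex subuniverse, order the classes, and check the ordinal-sum identities. In that setting your outline can be completed.

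There are, however, two genuine gaps. (1) The opening reduction ``we may take $\alg A$ to be totally ordered'' is not available: the theorem is about an \emph{arbitrary} integral residuated semilattice; total order (or representability) is not a hypothesis, and even a representable algebra is only a subdirect product of chains, and an ordinal-sum decomposition of the factors does not induce one of a subdirect product. Your relation $R$ is defined only for comparable pairs (``assuming $a\le b$''), so it cannot even be formulated in the general case, where one must additionally force incomparable elements into a common component (in any ordinal sum, elements of distinct components are comparable) and control meets and products of incomparable pairs. As written you address at most the totally ordered case --- which is the case the paper actually uses, but not what the theorem asserts. (2) Even for chains, the step you yourself flag as delicate --- closure of each class under the residuals --- is where the real content sits, and your sketch (``testing $R$ on $c$ against elements of $C$ \dots would force $\sim$ to identify $C$ and $D$'') is not an argument. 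It can be fixed, but by a direct residuation computation rather than the iteration you gesture at: by integrality, $a\le b$ gives $a\rr b = b\lr a = 1$, so only the ``downward'' residuals matter; and if $a<b$ lie in a class $C$ and $c:=b\rr a>a$, then $bc\le a$ yields $a\lr c\ge b>a$, so $c$ is \emph{directly} $R$-related to $a$ and hence in $C$ (similarly for products). A comparable explicit check is needed for the cross-class identities, e.g.\ that $ab=a$ when the class of $a$ lies below that of $b$: this does not follow immediately from $\neg(aRb)$ but from another residuation computation combined with convexity of the classes. Finally, the closing worry about a Zorn-type or transfinite assembly is a red herring: once the classes are convex subuniverses and the quotient of $A\ssm\{1\}$ is a chain, the identification of $\alg A$ with the ordinal sum is a pointwise verification of the operation tables, with no choice principle involved.
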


In particular if $\vv V$ is integral and representable, then every strictly join irreducible subvariety of $\vv V$ is generated by a single totally ordered algebra $\alg A$, which is the ordinal sum of sum-irreducible
totally ordered algebras in $\vv V$. Therefore knowing the sum-irreducible algebras in a variety of integral and representable residuated lattices is paramount. Though this is usually a difficult task there are cases in which
it can be achieved.

A {\bf pseudohoop} is a an integral and divisible  residuated lattice; the variety of representable pseudohoops will be denoted by $\mathsf{RPsH}$.
A {\bf Wajsberg pseudohoop} is a representable pseudohoop satisfying
\begin{align*}
&(y\lr x)\rr y \app (x \lr y) \rr  x;\\
&y \lr(x \rr  y) \app x \lr (y \rr  x).
\end{align*}
It turns out that totally ordered  Wajsberg pseudohoops are precisely the totally ordered sum irreducible pseudohoops.  This implies that:

\begin{theorem} \label{Dvu} \cite{Dvurecenskij2007} Let $\alg A$ be a totally ordered  pseudohoop; then $\alg A = \bigoplus_{i\in I} \alg A_i$ where each $\alg A_i$ is a
(totally ordered) Wajsberg pseudohoop (and the decomposition is unique up to isomorphism of the components).
\end{theorem}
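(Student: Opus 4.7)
The plan is to derive this theorem from the ordinal-sum decomposition of integral residuated semilattices stated immediately above (Theorem 3.2 of \cite{Agliano2018b}). Applying that theorem to $\alg A$ yields $\alg A = \bigoplus_{i \in I} \alg A_i$ with each $\alg A_i$ sum-irreducible, uniquely up to isomorphism of the components. The remaining work is then to upgrade each $\alg A_i$ from a sum-irreducible integral residuated semilattice to a Wajsberg pseudohoop; uniqueness will come essentially for free.

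First I would verify that each summand $\alg A_i$ is itself a totally ordered pseudohoop. Totality is immediate since $A_i \sse A$ inherits the chain order of $\alg A$. For divisibility, if $a, b \in A_i$ with $a \le b$, the witnesses $c, d \in A$ satisfying $a = cb = bd$ supplied by divisibility in $\alg A$ must in fact lie in $A_i \cup \{1\}$: otherwise the defining clauses of the ordinal sum would force the products $cb$ and $bd$ to equal $b$ (if $c$ or $d$ sits above) or to equal $c$ respectively $d$ (if they sit below), none of which is the prescribed value $a$. So divisibility witnesses survive in each summand and every $\alg A_i$ is a totally ordered pseudohoop.

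The core step, which I expect to be the main obstacle, is proving that every sum-irreducible totally ordered pseudohoop $\alg B$ satisfies both Wajsberg identities. I would attack the contrapositive: assuming one of the identities, say $(y \lr x)\rr y \app (x \lr y)\rr x$, fails at some pair $a, b \in B$, I would produce a nontrivial ordinal-sum decomposition $\alg B = \alg E \oplus \alg D$. Using totality one may assume $a < b$; the strategy is to locate a cut element $c \in B$ such that the down-set $\{x \in B : x \le c\}$ and the up-set $\{x \in B : x \ge c\} \cup \{1\}$ are both subalgebras whose ordinal sum recovers $\alg B$. Divisibility supplies the equalities $(b \lr a)\, b = a = b\, (b \rr a)$ and their Wajsberg-style rearrangements, and it is precisely the failure of the Wajsberg identity at $a, b$ that forces an ``idempotent-like gap'' between the two divisibility witnesses, yielding the required cut $c$. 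Noncommutativity only doubles the bookkeeping compared with the classical Blok--Ferreirim argument for hoops, since each left--right conjugate of the witnessing pair must be tracked separately.

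Once each sum-irreducible summand is shown to be Wajsberg, uniqueness is inherited directly from Theorem 3.2 of \cite{Agliano2018b}: a Wajsberg pseudohoop is in particular a sum-irreducible pseudohoop, so any two Wajsberg-sum decompositions of $\alg A$ are a fortiori sum-irreducible decompositions and therefore coincide up to isomorphism of the components, which completes the argument.
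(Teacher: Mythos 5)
Your reduction matches the way the paper frames the result: decompose $\alg A$ into sum-irreducible components via Theorem 3.2 of \cite{Agliano2018b} and then invoke the identification of totally ordered sum-irreducible pseudohoops with totally ordered Wajsberg pseudohoops. But note that the paper does not prove this identification either -- it states it as a known fact and cites \cite{Dvurecenskij2007} for the whole theorem. Consequently, your ``core step'' is not a reduction at all: it \emph{is} the cited theorem, and your treatment of it is only a strategy sketch. The decisive move -- that a single failure of one Wajsberg identity at a pair $a<b$ ``forces an idempotent-like gap between the two divisibility witnesses, yielding the required cut $c$'' -- is exactly where all the mathematical content lies, and it is not substantiated. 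In fact the framing is already shaky: an ordinal-sum decomposition of a chain is a partition of $B\setminus\{1\}$ into a down-set and an up-set, each closed under $\cdot$, $\lr$, $\rr$ and the lattice operations, with the cross-products behaving as prescribed; such a cut need not be witnessed by a single element $c$, and producing it from one failing instance requires a global argument (in the known proofs, one defines an equivalence relation on the whole chain, proves transitivity, and shows each class together with $1$ is a Wajsberg pseudohoop -- the noncommutative case in \cite{Dvurecenskij2007} needing additional care with the two residuals). Asserting that the bookkeeping ``only doubles'' does not discharge this.

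Two smaller points. First, the uniqueness claim is not covered by what you cite: Theorem 3.2 of \cite{Agliano2018b}, as quoted in the paper, asserts existence of a decomposition into sum-irreducibles but says nothing about uniqueness, so ``uniqueness comes essentially for free'' needs either a reference that actually states it or a separate argument; moreover your uniqueness step also silently uses the converse identification (totally ordered Wajsberg pseudohoops are sum-irreducible), which you again only assert. Second, your verification that each summand is a pseudohoop is correct but roundabout: divisibility is equational, namely $(x\lr y)y \app x \meet y \app y(y\rr x)$, and each component of an ordinal sum is a subalgebra of $\alg A$, so divisibility is inherited immediately. The upshot is that, modulo these repairs, your argument reproduces the paper's citation structure but leaves the theorem's actual kernel unproved.
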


If $\alg A$ is a totally ordered pseudohoop and $\alg A= \bigoplus_{i \in I} \alg A_i$ is its decomposition into Wajsberg pseudohoops, the {\bf index} of $\alg A$ is $n$ if $|I|=n$ and
infinite if $I$ is infinite. Let's consider the equation
\begin{equation}
\bigwedge_{i=0}^{n-1} x_i\lr (x_i \lr (x_{i+1} \rr x_i))  \le \bigvee_{i=0}^n x_i \tag{$\lambda_n$}.
\end{equation}

\begin{lemma}\label{index}(Lemma 6.1 in \cite{Agliano2018b}) For any totally ordered pseudohoop $\alg A$, the index of $\alg A$ is less or equal to $n \in \mathbb{N}$ if and only if $\alg A \vDash \lambda_n$.
\end{lemma}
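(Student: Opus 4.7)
The plan is to exploit the unique ordinal sum decomposition of Theorem~\ref{Dvu}: write $\alg A = \bigoplus_{j \in I} \alg A_j$ as an ordinal sum of Wajsberg pseudohoops, so the index of $\alg A$ equals $|I|$. Both directions reduce to a case-by-case computation of the building blocks $t_i := x_i \lr (x_i \lr (x_{i+1} \rr x_i))$ using the operation tables of the ordinal sum construction (in particular the identities $b \rr a = 1$ whenever $b \le a$, $b \rr a = a$ whenever $b$ sits in a component strictly above that of $a$, together with the universal $a \lr 1 = a$ and $a \lr a = 1$), supplemented by the characteristic identities of a Wajsberg pseudohoop inside each component.

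For the direction ($\Leftarrow$) I argue by contrapositive. Suppose the index exceeds $n$ and pick indices $j_0 < j_1 < \cdots < j_n$ in $I$ together with elements $a_k \in \alg A_{j_k}\setminus\{1\}$. The assignment $x_k \mapsto a_k$ places every consecutive pair $(x_k, x_{k+1})$ in strictly increasing components, and iterating the three nested residuations via the ordinal sum reductions above produces a definite value for each conjunct $t_k$ which forces the meet $\bigwedge_k t_k$ to lie strictly above $\bigvee_k a_k = a_n$. Hence $\lambda_n$ fails in $\alg A$.

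For the direction ($\Rightarrow$), assume $|I| \le n$ and fix arbitrary $x_0, \ldots, x_n \in A$. Label each $x_i$ by its component $c(x_i) \in I$ (with the convention that $1$ belongs to every component) and split the analysis of each conjunct $t_i$ into three cases: (i) $c(x_i) < c(x_{i+1})$, where a direct ordinal sum computation yields $t_i = x_i$, hence $t_i \le \bigvee_j x_j$; (ii) $x_{i+1} \le x_i$ (covering both the cross-component decrease and the same-component non-increase), where $t_i = 1$ and so is absorbed by the meet; and (iii) $c(x_i) = c(x_{i+1})$ with $x_i < x_{i+1}$, where the computation is internal to a single Wajsberg pseudohoop and the defining axioms $(y \lr x) \rr y \approx (x \lr y) \rr x$, $y \lr (x \rr y) \approx x \lr (y \rr x)$ force $t_i$ to reduce to an element of that component controlled by $x_i$ and $x_{i+1}$. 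With $n+1$ variables distributed among at most $n$ components, the pigeonhole principle guarantees at least one index at which case (i) or (iii) occurs, and combining the bound produced there with the trivial contributions from the remaining conjuncts yields $\bigwedge_{i=0}^{n-1} t_i \le \bigvee_{i=0}^n x_i$.

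The main obstacle is the forward direction, since the ``decreasing'' conjuncts (case (ii)) are vacuously equal to $1$ and therefore contribute nothing individually to bounding the meet; the whole conjunction must be pinned down by the pigeonhole-induced effective conjunct. Verifying that the effective conjunct alone suffices requires the careful ordinal-sum computation of case (i) and, in case (iii), exploiting the Wajsberg pseudohoop axioms to reduce $t_i$ to a value comparable to some $x_j$ — essentially the linearly ordered version of the identity that in the commutative setting reads $x \lr (x \lr y) = x \lor y$.
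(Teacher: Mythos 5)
The paper itself does not prove this lemma (it is imported verbatim from Lemma 6.1 of \cite{Agliano2018b}), so your argument has to stand on its own, and as written it does not: the case values and the counting step contain concrete errors, which are easiest to see from an internal inconsistency. Take $x_0=\dots=x_n=a$ with $a<1$. Every consecutive pair then falls under your case (ii), so by your own values every conjunct equals $1$ while $\bigvee_i x_i=a<1$; your case analysis would therefore refute the very inequality you are trying to prove, and it also shows that your pigeonhole claim (that some index must fall under case (i) or (iii)) is false. What has happened is that your values are the correct ones for the term as printed in this paper, $x_i\lr(x_i\lr(x_{i+1}\rr x_i))$, and for that term the statement is false: the all-equal assignment falsifies it in every nontrivial chain, and in $\alg C_\omega$ (which has index $1$), taking $x_0=-3$, $x_1=-2$ in the negative cone of $\mathbb Z$, the single conjunct evaluates to $-1>\bigvee_i x_i=-2$, so your case (iii) does not deliver the bound you assert either. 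The term in Lemma 6.1 of \cite{Agliano2018b} is the two-residual one, $x_i\lr(x_{i+1}\rr x_i)$ (commutatively $(x_{i+1}\imp x_i)\imp x_i$); for it your cases (i) and (ii) are exactly swapped: the strictly-increasing-component case gives $t_i=1$ (the vacuous conjunct), the case $x_{i+1}\le x_i$ gives $t_i=x_i\le\bigvee_j x_j$, and the same-component increasing case gives $t_i=x_i\vee x_{i+1}$, because $x\lr(y\rr x)\app y\lr(x\rr y)$ is precisely the term realizing the join in a totally ordered Wajsberg pseudohoop (the analogue of $(x\imp y)\imp y$). The counting you then need is not that two variables share a component, but that with at most $n$ components one cannot have $c(x_0)<c(x_1)<\dots<c(x_n)$ strictly increasing, i.e., not every conjunct can be vacuous.

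The backward direction has the same soft spot: you never evaluate the conjuncts on your witness $a_0,\dots,a_n$ (one per component, all $\neq 1$), you only assert that the meet comes out strictly above the join. For the two-residual term this is immediate ($x_{k+1}\rr x_k=a_k$ and $a_k\lr a_k=1$, so every conjunct is $1>a_n$) and your witness does refute $\lambda_n$; for the printed three-residual term the same computation gives $t_k=a_k$, so the meet is $a_0\le a_n$ and nothing is refuted. In short, the skeleton you chose — decompose via Theorem \ref{Dvu}, analyse consecutive pairs by component, then count — is the right (and standard) one, but the proof only goes through after correcting the term and redoing all three case computations and the counting step accordingly.
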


By Theorem \ref{mainwell} and Lemma \ref{prelinear} if $\alg A$ is a totally ordered pseudohoop, $\VV(\alg A)$ is join irreducible in $\Lambda(\mathsf{RPsH})$.

\begin{theorem}\label{thm:notsji}
Let $\alg A = \bigoplus_{i \in I} \alg A_i$ be a totally ordered pseudohoop; if the index of $\alg A$ is infinite, then $\VV(\alg A)$ is not strictly join irreducible.
\end{theorem}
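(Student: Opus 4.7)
The plan is to exhibit $\VV(\alg A)$ as the join of a strictly ascending countable chain of proper subvarieties, which rules out strict join irreducibility. For each $n \in \mathbb N$, let $\vv V_n$ be the subvariety of $\VV(\alg A)$ defined relative to $\VV(\alg A)$ by the equation $\lambda_n$ from Lemma \ref{index}. Because $\alg A$ has infinite index, Lemma \ref{index} gives $\alg A \not\vDash \lambda_n$ for every $n$, so each $\vv V_n$ is a \emph{proper} subvariety of $\VV(\alg A)$. Moreover, index $\le n$ implies index $\le n+1$, so $\vv V_n \sse \vv V_{n+1}$ and $\{\vv V_n\}_{n\in\mathbb N}$ is an ascending chain.

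The heart of the argument is to show $\VV(\alg A) = \bigvee_{n\in\mathbb N} \vv V_n$. Let $\e(\vuc xk)$ be any equation failing in $\VV(\alg A)$; it fails in $\alg A$ under some assignment $x_j \mapsto a_j \in A$, where each $a_j$ lies in some summand $\alg A_{i_j}$. Set $S = \{i_1,\dots,i_k\}\sse I$ and $\alg B = \bigoplus_{i\in S}\alg A_i$. A direct inspection of the ordinal-sum operations shows that $\bigcup_{i\in S} A_i$ is closed under the operations of $\alg A$: operations internal to one summand stay in that summand, while products and residuals between elements of two distinct summands always return one of the two arguments (and hence an element of $\bigcup_{i\in S} A_i$). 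Thus $\alg B$ is a subalgebra of $\alg A$, so $\alg B\in\VV(\alg A)$. Since $|S|\le k$, the ordinal sum $\alg B$ has index at most $k$, so Lemma \ref{index} gives $\alg B\vDash\lambda_k$ and therefore $\alg B\in\vv V_k$. The original witnessing assignment already lies in $\alg B$, so $\e$ fails in $\alg B$ and hence in $\vv V_k$.

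Consequently every equation failing in $\VV(\alg A)$ already fails in some $\vv V_n$, whence $\VV(\alg A) = \bigvee_{n\in\mathbb N}\vv V_n$. Since each $\vv V_n$ is strictly contained in $\VV(\alg A)$, this decomposition precludes $\VV(\alg A)$ from being strictly join irreducible. The only delicate step is the closure check for $\alg B$; but because operations between elements of distinct components of an ordinal sum ignore any intervening components, the subalgebra claim goes through even though $S$ need not be an initial segment of $I$. No new machinery beyond Theorem \ref{Dvu} and Lemma \ref{index} is required.
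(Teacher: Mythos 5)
Your proof is correct and follows essentially the same route as the paper's: both reduce an equation failing in $\alg A$ to a subalgebra of finite index and use Lemma \ref{index} to see that such finite-index chains lie in proper subvarieties whose join recovers $\VV(\alg A)$. The only cosmetic difference is that you take the ordinal sum of the summands containing the witnesses and the ascending chain of subvarieties axiomatized by $\lambda_n$, whereas the paper takes the subalgebra generated by the witnesses and joins the varieties generated by all finite-index chains.
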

\begin{proof} Let $C$ be the set of all  chains in $\VV(\alg A)$ having finite index; by Lemma \ref{index} $\VV(\alg C) \ne \VV(\alg A)$ for all $\alg C \in C$.
Suppose that an equation $p(\vuc xn) \app 1$ fails in $\alg A$; then there are $\vuc an \in A$ such that $p(\vuc an) < 1$. If $\alg C$ is the subalgebra of $\alg A$ generated by $\vuc an$,
then $\alg C \not\vDash p(\vuc xn) \app 1$ and $\alg C$ cannot have index greater then $n$. Hence $\alg C \in C$ and  $\bigvee_{\alg C \in C} \VV(\alg C) = \VV(\alg A)$; hence $\VV(\alg A)$ is not strictly join irreducible.
\end{proof}

So if $\alg A$ is a totally ordered pseudohoop and $\VV(\alg A)$ is strictly join irreducible in $\Lambda(\sf RPsH)$, then $\alg A$ must have finite index. Chains of finite index have been studied in \cite{AglianoMontagna2017} in a different context ($\mathsf{BL}$-algebras and basic hoops); we will show that the results therein extend to totally ordered pseudohoops in a straightforward way.

Let $\alg P$ be a poset; a {\bf join dense completion} of $\alg P$ is a complete lattice $\alg L$ with an order embedding $\a:\alg P \longrightarrow \alg L$ such that $\a(\alg P)$ is {\bf join dense} in $\alg L$, i.e. for every $x \in L$, $x = \bigvee\{\a(p): p \in P, \a(p) \le x\}$.
For any subvariety $\vv V$ of $\sf RPsH$ we define $$\mathcal F_\vv V = \{\alg C \in \vv V: \alg C \mbox{ is a chain of finite index}\}$$ and $$\Phi_\vv V  = \{\VV(\alg A): \alg A \in \mathcal F_\vv V\};$$ $\Phi_{\vv V}$ is clearly a poset under inclusion and moreover:

\begin{theorem}\label{joindense} For any subvariety $\vv V$ of $\sf RPsH$, $\Lambda(\vv V)$ is the join dense completion of $\Phi_\vv V$.
\end{theorem}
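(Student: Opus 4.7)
The plan is to verify the two requirements for $\Lambda(\vv V)$ to be a join dense completion of $\Phi_\vv V$: that $\Phi_\vv V$ order-embeds into a complete lattice and that every $\vv W \in \Lambda(\vv V)$ is the join, in $\Lambda(\vv V)$, of the elements of $\Phi_\vv V$ lying below it. The embedding part is free: $\Phi_\vv V$ is by construction a subset of the complete lattice $\Lambda(\vv V)$ ordered by inclusion, so we take $\a$ to be the inclusion map. Hence the whole content of the statement is the join density claim.

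Fix $\vv W \in \Lambda(\vv V)$ and set $\vv W' = \bigvee\{\VV(\alg C) : \alg C \in \mathcal F_\vv V,\ \VV(\alg C) \sse \vv W\}$. The inclusion $\vv W' \sse \vv W$ is immediate from the definition. For $\vv W \sse \vv W'$ I would argue contrapositively on equations: if some $p \app 1$ fails in $\vv W$, I exhibit a single $\alg C \in \mathcal F_\vv V$ with $\VV(\alg C) \sse \vv W$ and $\alg C \not\vDash p \app 1$, which will force $\vv W' \not\vDash p \app 1$. Since $\vv W$ is a subvariety of $\mathsf{RPsH}$ it is representable, so by Birkhoff's Theorem there exists a subdirectly irreducible, hence totally ordered, algebra $\alg A \in \vv W$ with $\alg A \not\vDash p \app 1$. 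Choose witnesses $a_1,\dots,a_n \in A$ with $p(a_1,\dots,a_n) \neq 1$ and let $\alg C$ be the subalgebra of $\alg A$ they generate; then $\alg C \in \vv W$, $\alg C$ is a chain, and $\alg C \not\vDash p \app 1$.

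The main step, and essentially the only delicate one, is to show that $\alg C$ has finite index so that $\alg C \in \mathcal F_\vv V$. This is the observation already invoked in the proof of Theorem \ref{thm:notsji}. By Theorem \ref{Dvu}, decompose $\alg A = \bigoplus_{i \in I} \alg A_i$ into totally ordered Wajsberg pseudohoops and inspect the definition of ordinal sum: whenever $x \in A_i$ and $y \in A_j$ with $i \neq j$, each of $x \cdot y$, $y \cdot x$, $x \lr y$, $y \lr x$, $x \rr y$, $y \rr x$, $x \join y$, $x \meet y$ lies in $\{1\} \cup A_{\min(i,j)}$. Hence $\alg C$ is contained in $\{1\} \cup \bigcup_{k=1}^{m} A_{i_k}$, where $\alg A_{i_1},\dots,\alg A_{i_m}$ are the at most $n$ summands meeting $\{a_1,\dots,a_n\}$, and inside each such summand $\alg C$ restricts to a subalgebra of a Wajsberg pseudohoop, which remains Wajsberg. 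Therefore $\alg C$ is an ordinal sum of at most $n$ Wajsberg pseudohoops, has index at most $n$, and so lies in $\mathcal F_\vv V$. Then $\VV(\alg C)$ is one of the joinands defining $\vv W'$, forcing $\vv W' \not\vDash p \app 1$; by contrapositive $\vv W \sse \vv W'$ and the proof is complete.
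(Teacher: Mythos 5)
Your proof is correct and takes essentially the same route as the paper's: representability reduces everything to totally ordered members, and the subalgebra generated by the finitely many witnesses is a chain whose index is bounded by the number of generators, which is exactly the observation the paper asserts (and already used in Theorem \ref{thm:notsji}) before concluding that every subvariety is the join of the varieties generated by its finite-index chains. One harmless slip in your detailed verification: for $x \in A_i$, $y \in A_j$ with $i \neq j$, the join $x \join y$ lies in $\{1\} \cup A_{\max(i,j)}$ rather than $\{1\} \cup A_{\min(i,j)}$, but since both indices are among the components meeting the generating set, the containment of the generated subalgebra (and hence the bound on its index) is unaffected.
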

\begin{proof} Let $\alg A$ be a finitely generated totally ordered pseudohoop in $\vv V$; then,
 $\alg A$ has index $\le n$ and so $\alg A \in \mathcal F_\vv V$. Since every subvariety $\vv W$ of  $\vv V$  is generated by its totally ordered members and every algebra is in the variety generated by its finitely generated subalgebras,   every subvariety $\vv W$ of $\vv V$   is the supremum of all varieties $\VV(\alg A)$,
where $\alg A \in \mathcal F_\vv W$. We have thus shown that for every $\vv W \in  \Lambda(\vv V)$
$$
\vv W =\bigvee\{\VV(\alg A): \alg A \in \mathcal F_\vv W\}.
$$
This implies that $\Lambda(\vv V)$ is the join dense completion of $\Phi_\vv V$.
\end{proof}

\begin{corollary}\label{correp} Let $\vv V$ be any variety of representable pseudohoops; if $\Lambda(\vv V)$ is finite then
 $\vv V$  satisfies $\lambda_n$ for some $n \in \mathbb{N}$, and for all $\alg A \in \mathcal F_\vv V$, $\VV(\alg A)$ is strictly join irreducible in $\Lambda(\vv V)$.
\end{corollary}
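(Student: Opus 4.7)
The plan is to exploit the finiteness of $\Lambda(\vv V)$ to collapse the potentially infinite joins furnished by Theorem \ref{joindense} into finite ones, and then to invoke the principle that in a finite lattice join irreducibility and strict join irreducibility coincide. Both assertions will essentially follow from this observation combined with the machinery already developed.

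For the $\lambda_n$ part, I would first note that $\Phi_\vv V$ is a subposet of $\Lambda(\vv V)$ and hence finite. Applying Theorem \ref{joindense} with $\vv W = \vv V$ gives
$$
\vv V = \bigvee\{\VV(\alg A): \alg A \in \mathcal F_\vv V\},
$$
which, being now a finite join, can be rewritten as $\vv V = \VV(\alg A_1) \join \cdots \join \VV(\alg A_k)$ for chains $\alg A_1,\dots,\alg A_k \in \mathcal F_\vv V$ of finite indices $n_1,\dots,n_k$. Setting $n := \max\{n_1,\dots,n_k\}$, Lemma \ref{index} yields $\alg A_i \vDash \lambda_n$ for every $i$, whence each $\VV(\alg A_i) \vDash \lambda_n$ and therefore $\vv V \vDash \lambda_n$.

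For the second assertion, I would fix $\alg A \in \mathcal F_\vv V$. Since $\vv V$ is representable it satisfies $(G) = (G_{0,1})$, and by Lemma \ref{prelinear} the totally ordered pseudohoop $\alg A$ is well-connected, i.e.\ $\Gamma^0$-connected. Theorem \ref{mainwell} then guarantees that $\VV(\alg A)$ is join irreducible in $\Lambda(\vv V)$. To upgrade this to strict join irreducibility, I would argue that any decomposition $\VV(\alg A) = \bigvee_i \vv W_i$ in $\Lambda(\vv V)$ involves only finitely many distinct summands (as $\Lambda(\vv V)$ is finite), so the join reduces to a finite one, and iterating the binary join-irreducibility property forces $\VV(\alg A) = \vv W_i$ for some $i$; thus $\VV(\alg A)$ is strictly join irreducible in $\Lambda(\vv V)$.

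The proof is largely a matter of combining the right previously proved results, and I do not foresee a serious obstacle. The only point requiring a little care is the identification of the ambient lattice in which join irreducibility is tested: any join decomposition of $\VV(\alg A)$ by subvarieties of $\vv V$ automatically sits inside $\Lambda(\VV(\alg A))$ as well, so invoking Theorem \ref{mainwell} and then passing to the ambient $\Lambda(\vv V)$ is harmless.
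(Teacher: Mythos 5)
Your proof is correct, and its second half (well-connectedness of chains via Lemma \ref{prelinear}, join irreducibility via Theorem \ref{mainwell}, then strict join irreducibility because $\Lambda(\vv V)$ is finite) is exactly the paper's argument, including the observation that join irreducibility does not depend on the ambient subvariety lattice. Where you diverge is the $\lambda_n$ claim: the paper dispatches it by contraposition, noting that if no $\lambda_n$ held then $\vv V$ would contain chains of arbitrarily large index, and these (being separated equationally by the $\lambda_n$'s, i.e.\ by Lemma \ref{index}) would generate infinitely many distinct subvarieties, contradicting finiteness of $\Lambda(\vv V)$. You instead argue positively: $\Phi_\vv V\sse\Lambda(\vv V)$ is finite, so Theorem \ref{joindense} exhibits $\vv V$ as a finite join $\VV(\alg A_1)\join\cdots\join\VV(\alg A_k)$ with each $\alg A_i$ of finite index $n_i$, and taking $n=\max n_i$, Lemma \ref{index} gives $\alg A_i\vDash\lambda_n$, hence $\vv V\vDash\lambda_n$ since the (in)equality $\lambda_n$ is preserved under $\HH\SU\PP$ of the union of the generating classes. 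Both routes rest on Lemma \ref{index} and the finiteness of $\Lambda(\vv V)$; yours makes the reliance on Theorem \ref{joindense} explicit and yields the bound $n$ concretely as the maximal index among finitely many representative chains, while the paper's contrapositive is shorter and does not need the join-dense decomposition at all for this part.
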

\begin{proof} If $\Lambda (\vv V)$ is finite then it must satisfy $\lambda_n$ for some $n \in \mathbb{N}$; otherwise in $\vv V$ there would be chains of arbitrarily large index and they would generate infinitely many distinct varieties. Moreover if $\alg A \in \mathcal F_\vv V$, then by Theorem \ref{mainwell} and Lemma \ref{prelinear}, $\VV(\alg A)$ is join irreducible. Thus since $\Lambda(\vv V)$ is finite, $\VV(\alg A)$ is strictly join irreducible in $\Lambda(\vv V)$.
\end{proof}

If $\alg A, \alg B \in \mathcal F_{\sf RPsH}$ we define $\alg A \preceq \alg B$ iff $\HH\SU\PP_u(\alg A) \sse \HH\SU\PP_u(\alg B)$; then $\preceq$ is a preorder with associated equivalence relation $\equiv$ and by J\'onsson Lemma $\VV(\alg A) = \VV(\alg B)$ if and only $\alg A \equiv \alg B$.  It follows that to determine which varieties of representable pseudohoops generated by a chain of finite index are strictly join irreducible we may restrict to a representative of each $\equiv$-class. To do so we must have some ways of classifying the pseudo Wajsberg components in the decomposition.

\begin{theorem} \cite{Dvurecenskij2002}\cite{BahlsColeGalatos2003}Let $\alg A$ be any totally ordered Wajsberg  pseudohoop. If $\alg A$ is cancellative, then there is a totally ordered group $\alg G$ such that $\alg A$ is isomorphic with $\alg G^-$, the negative cone of $\alg G$. If $\alg A$ is bounded, then there is a totally ordered group with strong unit $u$, such that $\alg A \cong \Gamma(\alg G,u)$.
\end{theorem}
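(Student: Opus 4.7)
The proof splits into the two cases, and in both the strategy is to construct a totally ordered group $\alg G$ from $\alg A$ in a canonical way and then verify that $\alg A$ is recovered either as the negative cone of $\alg G$ or as the Mundici-type interval $\Gamma(\alg G, u)$.

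In the cancellative case, the plan is to take $\alg G$ to be the group of left (equivalently right) fractions of the monoid reduct $\la A, \cdot, 1\ra$. Since $\alg A$ is totally ordered, cancellative and divisible (as every pseudohoop is), for any $a, b \in A$ with $a \le b$ there exists $c \in A$ with $b = ac$; this is exactly the Ore condition needed to define $\alg G$ and to extend the total order of $\alg A$ to a compatible total order on $\alg G$. One then checks that the divisions $\lr$ and $\rr$ of $\alg A$ correspond, restricted to $A = G^-$, to left and right group-theoretic subtraction cut off at $1$, so that $\alg A \cong \alg G^-$ as residuated lattices; the Wajsberg identities are what make these correspondences well-defined on both sides.

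In the bounded case, the plan is to mimic Mundici's $\Gamma$-functor construction in the non-commutative setting. Using the bottom $0$ and top $1$ of $\alg A$, one introduces \emph{good sequences} over $\alg A$ (finitely supported sequences satisfying a monoid-compatibility condition extracted from the Wajsberg axioms), shows that under coordinatewise product they form a cancellative totally ordered monoid, and then takes its group of fractions to obtain $\alg G$. The element $u$ is the class of the sequence whose only nonzero entry is $1$, and one verifies both that $u$ is a strong unit of $\alg G$ and that $\Gamma(\alg G, u)$, endowed with the induced operations, is isomorphic to $\alg A$.

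The main obstacle will be handling non-commutativity faithfully throughout. Unlike the classical Mundici construction, one must manipulate left and right residuals independently, verify the Ore condition on both sides, and check that good sequences interact correctly with both divisions. The two Wajsberg identities are precisely tailored for this, substituting for the single MV axiom $(x \imp y) \imp y \app (y \imp x) \imp x$; tracking them through the construction so that every element of $\alg G^-$, respectively $\Gamma(\alg G, u)$, is actually reached by the canonical embedding of $\alg A$ is where the work concentrates.
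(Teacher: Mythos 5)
The paper does not prove this statement at all: it is imported from \cite{Dvurecenskij2002} (bounded case, via the polynomial equivalence of bounded Wajsberg pseudohoops with pseudo MV-algebras) and \cite{BahlsColeGalatos2003} (cancellative case), and your outline follows exactly the strategies of those sources --- the Ore condition and group of right/left fractions with the total order extended to the quotient group for the cancellative case, and Dvure\v{c}enskij's non-commutative generalization of Mundici's good sequences for the bounded case. So the approach is the correct one, but note that what you have is a plan rather than a proof: the bounded case is essentially the entire content of \cite{Dvurecenskij2002}, where showing that good sequences over a non-commutative structure form a cancellative totally ordered monoid, that its group of fractions carries a strong unit $u$, and that the canonical embedding maps $\alg A$ onto $\Gamma(\alg G,u)$ occupies most of that paper and is far from a routine verification, while in the cancellative case the Wajsberg identities are in fact automatic for totally ordered cancellative pseudohoops, so they are not where the work lies.
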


$\Gamma(\alg G,u)$ is often called the {\em Mundici's functor}, see \cite{Dvurecenskij2002} for a definition. Since by Lemma 4.5 in \cite{Agliano2018a} each Wajsberg pseudohoops is either bounded or cancellative, the theorem gives a complete representation of totally ordered Wajsberg pseudohoops.  However this is less useful than it seems, at least in  general. First totally ordered groups are common (every free group is totally orderable); second, if $\alg G$ is a totally ordered group we have no idea in general of what $\II\SU\PP_u(\alg G^-)$ is (except for one case, see next section). We believe that a careful analysis of the techniques employed in  \cite{DvurecenskijHolland2007} and \cite{DvurecenskijHolland2009} could lead to a better understanding of the problem; however we will not explore this path here, leaving it open for future investigations.
From now on we will deal only with commutative structures, first because it is possible to prove meaningful results about them and second because they are more suited for the applications we have in mind.

\section{Strictly join irreducible varieties of basic hoops}\label{SJIbasichoops}

From now on we will stay within the variety $\mathsf{CIRL}$ of commutative and integral residuated lattices; in this case $a \rr b = b \lr a$ and we will use the symbol $a \imp b$ for both.
For any commutative residuated lattice the concepts of prelinear and representable coincide (see for instance \cite{Agliano2018a}); a {\bf basic hoop} is a commutative and representable pseudohoop and a {\bf Wajsberg hoop} is a commutative Wajsberg pseudohoop.

By Theorem \ref{Dvu} any totally ordered basic hoop is the ordinal sum of Wajsberg hoops (and this is how the result was originally formulated in \cite{AglianoMontagna2003}); hence
by the same argument as before (Theorem \ref{thm:notsji}), a strictly join irreducible variety $\vv V$ of basic hoops must be $\VV(\alg A)$ for some chain of finite index $\alg A$. In the commutative case however we can choose representatives of the $\equiv$-classes in a very nice and uniform way.

We recall that given any totally ordered hoop of finite index $\alg A = \bigoplus_{i=1}^n \alg A_i$ we have \cite{AglianoMontagna2003}
$$
\HH\SU\PP_u(\alg A) = \bigcup_{i=1}^n( \bigoplus_{j=1}^{i-1} \II\SU\PP_u(\alg A_j) \oplus \HH\SU\PP_u(\alg A_i)).
$$
Therefore if $\alg A = \bigoplus_{i=1}^n \alg A_i$ and $\alg B = \bigoplus_{j=1}^m \alg B_i$ belong to the class of totally ordered basic hoops of finite index $\mathcal F_{\sf BH}$, then $\alg A \equiv \alg B$ if and only if $n=m$,   $\II\SU\PP_u(\alg A_i) = \II\SU\PP_u(\alg B_i)$ for $i=1,\dots,n-1$ and $\HH\SU\PP_u(\alg A_n) = \HH\SU\PP_u(\alg B_n)$.

First we need to straighten out some technical details; a {\bf $\mathsf{BL}$-algebra} is a zero-bounded basic hoop, i.e. a hoop with a nullary operation, denoted by $0$, satisfying  $0 \le x$; a {\bf Wajsberg algebra} is a zero-bounded Wajsberg hoop. It is clear that any zero-free subreduct of  a $\mathsf{BL}$-algebra is a basic hoop, and that any bounded (i.e. with a smallest element) basic hoop is a reduct of a $\mathsf{BL}$-algebra;  moreover the variety $\mathsf{BH}$ is the class of zero-free subreducts of $\mathsf{BL}$-algebras \cite{AFM}.
 The operator $\II\SU\PP_u$ on Wajsberg hoops has been extensively studied first in \cite{Gispert2002} and then in \cite{AglianoMontagna2003};
really the results in \cite{Gispert2002} are about  MV-algebras but we can use them anyway and this is why.
MV-algebras are termwise equivalent to Wajsberg algebras which in turn are polynomially equivalent to bounded Wajsberg hoops. The following lemma is easy to show, and is often implicitly used in the study of Wajsberg hoops and Wajsberg algebras.
\begin{lemma}
Let $\OO$ be a class operator that is a composition of $\II,\HH,\SU,\PP,$ $\PP_u$. Let
$\alg A, \alg B$ be Wajsberg chains and $\alg A_0,\alg B_0$ be their Wajsberg hoop reducts. Then $\OO(\alg A) \sse \OO(\alg B)$ if and only if $\OO(\alg A_0) \subseteq \OO(\alg B_0)$.
\end{lemma}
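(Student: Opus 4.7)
The plan is to argue by induction on the length of the composition $\OO = \OO_n \circ \cdots \circ \OO_1$ with each $\OO_i \in \{\II,\HH,\SU,\PP,\PP_u\}$. In the base case ($\OO$ is the empty composition), both sides of the biconditional collapse to $\alg A \cong \alg B$ (resp.\ $\alg A_0 \cong \alg B_0$), and these are equivalent because in a Wajsberg chain the constant $0$ is uniquely recoverable from the hoop reduct as the minimum element; hence any order isomorphism of the hoop reducts preserves it.

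For the inductive step, the operators $\II, \PP, \PP_u$ are handled routinely: each acts componentwise on the signature, so the reduct functor commutes with them, and ultraproducts and direct products of Wajsberg chains remain Wajsberg chains whose minimum is exactly the ultraproduct/product of the component minima. The case $\HH$ reduces to the observation that a surjective hoop homomorphism between two Wajsberg chains must send the minimum of the domain to the minimum of the codomain (both are characterized as the unique smallest element of a chain), so it lifts to an MV-homomorphism; conversely every MV-homomorphism restricts to a hoop one. Thus at the level of Wajsberg chains, $\II,\HH,\PP,\PP_u$ commute with the reduct $(-)_0$ in the strongest possible sense.

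The main obstacle is the inductive step for $\SU$, because an MV subalgebra must contain the constant $0$ while a hoop subreduct need not. The key structural input is the dichotomy that every nontrivial subhoop of a Wajsberg chain is either (i) bounded, in which case its minimum must coincide with the ambient minimum (otherwise it would be a nontrivial idempotent strictly between $0$ and $1$, contradicting the fact that the only idempotents of a Wajsberg chain are $0$ and $1$), and hence is an MV subalgebra in disguise; or (ii) cancellative, in which case it does not contain the ambient minimum. Since cancellativity is equationally defined, it is preserved by each of $\II,\HH,\SU,\PP,\PP_u$, so once a $\SU$-step enters the cancellative regime, no subsequent application of $\OO$ can return to the bounded regime (except trivially). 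Consequently $\OO(\alg A_0)$ cleanly decomposes into a bounded part, which is in bijection with $\OO(\alg A)$ under $(-)_0$, and a cancellative part, obtained by a parallel operator chain starting from cancellative subhoops of $\alg A_0$.

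On the bounded part the biconditional transfers directly by the induction hypothesis and the correspondence between MV subalgebras and bounded subhoops. On the cancellative part it transfers because an MV-level inclusion $\OO(\alg A) \sse \OO(\alg B)$ forces $\alg A$ itself to be recoverable from $\alg B$ by the operator chain (in particular $\alg A_0$ is recoverable from $\alg B_0$), so any cancellative subhoop occurring in $\OO(\alg A_0)$ is witnessed by the same parallel construction applied to $\alg B_0$. Combining the two parts yields $\OO(\alg A)\sse\OO(\alg B) \iff \OO(\alg A_0)\sse\OO(\alg B_0)$, completing the induction.
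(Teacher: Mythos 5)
Your two genuinely good observations are the idempotent argument (a nontrivial subhoop of a Wajsberg chain that has a minimum must have minimum $0$, because that minimum is idempotent and a Wajsberg chain has no idempotents other than $0,1$) and the fact that cancellativity is equational, so nothing bounded and nontrivial can re-emerge from the cancellative regime; the paper itself gives no proof of this lemma, and these facts are indeed part of why it is ``easy''. But your argument does not close. The pivotal claim you use to finish --- that $\OO(\alg A_0)$ decomposes cleanly into a bounded part in bijection with $\OO(\alg A)$ under the reduct, plus a cancellative part coming from cancellative subhoops of $\alg A_0$ --- is false as soon as $\PP$ (or $\SU\PP_u$ of mixed families) occurs in the composition. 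For a non-simple chain such as Chang's algebra $\alg A=\alg{\L}_1^\infty$, the hoop $\op{Rad}(\alg A)\times\alg A_0$ is a subhoop of $\alg A_0\times\alg A_0$, hence lies in $\SU\PP(\alg A_0)$, yet it is neither bounded nor cancellative, is not the reduct of any member of $\SU\PP(\alg A)$, and is not produced from cancellative subhoops alone. Your dichotomy applies to subhoops of \emph{chains}, and after the first operator the intermediate classes are not chains (also, contrary to what you write, direct products of Wajsberg chains are certainly not Wajsberg chains); even the bounded members of $\SU\PP(\alg A_0)$ need not contain the all-zero tuple, so matching them with reducts of $\SU\PP(\alg A)$ already needs an argument (project onto the coordinates where the minimum is $0$) that you never supply.

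The induction is likewise not set up in a usable way. Peeling off the outermost operator only gives one implication, since $O(K)\sse O(K')$ does not imply $K\sse K'$; peeling off the innermost one makes the inductive hypothesis, which concerns two single Wajsberg chains, inapplicable to the classes $\SU(\alg A_0)$, $\PP(\alg A_0)$, etc. This is why your treatment of $\HH$ (``a surjective hoop homomorphism between two Wajsberg chains\dots'') and your final step for the cancellative part are assertions rather than proofs: the latter amounts to claiming that $\alg A\in\OO(\alg B)$, equivalently $\alg A_0\in\OO(\alg B_0)$, yields $\OO(\alg A_0)\sse\OO(\alg B_0)$, which requires absorption and commutation facts tailored to the composition ($\SU\SU=\SU$, $\PP\SU\le\SU\PP$, pulling subalgebras back along surjections to handle $\HH\SU$, iterated ultrapowers being ultrapowers for $\PP_u$), none of which appear. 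A repairable route is to prove the two implications separately for a fixed composition (or a normal form obtained from the standard operator inequalities), using exactly your idempotent observation to see that hoop embeddings and surjections between the relevant bounded algebras automatically preserve $0$, plus the projection and pullback tricks above; for the operators the paper actually needs, $\II\SU\PP_u$ and $\HH\SU\PP_u$, this is a few lines, but as written your general argument has a genuine gap.
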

This allows us to consider totally ordered bounded Wajsberg hoops {\em as if they were} Wajsberg algebras. In other words to check that $\II\SU\PP_u(\alg A) = \II\SU\PP_u(\alg B)$ when they are both bounded totally ordered Wajsberg hoops is enough to check the equality by considering them as Wajsberg algebras.
Since a totally order Wajsberg hoop is either bounded or cancellative \cite{BlokFerr2000} we can use Gispert's results  in \cite{Gispert2002} for Wajsberg algebras and integrate them with the cancellative case.

In \cite{AglianoMontagna2017} an algorithm is given to choose a representative for each $\equiv$-class of finite ordinal sums of $\mathsf{BL}$-algebras; while we reassure that the same argument goes through for basic hoops, in this case it is more useful to present a slightly finer classification, based on some ideas in \cite{AguzzoliBianchi2020}.
Let $\alg G$ be a lattice ordered abelian group; by \cite{Mun1986}, if $u$ is a strong unit of $\alg G$ we can construct a bounded Wajsberg hoop $\Gamma(\alg G,u) = \la [0,u], \imp,\cdot, 0, u\ra$ where $ab = \op{max}\{a+b-u,0\}$ and $a \imp b = \op{max}\{u-a+b,u\}$. Let now  $\mathbb Z \times_l \mathbb Z$ denote the lexicographic product of two copies of $\mathbb Z$. In other
words, the universe is the cartesian product, the group operations are defined componentwise
and the ordering is the lexicographic ordering (w.r.t. the natural ordering of $\mathbb Z$); then  $\mathbb Z \times_l \mathbb Z$ is a totally ordered abelian group and we can apply  $\Gamma$ to it. We now define some useful Wajsberg chains:
\begin{enumerate}
\itemb the finite Wajsberg chain with $n+1$ elements $\alg{\L}_n = \Gamma(\mathbb Z,n)$;
\itemb the infinite Wajsberg chain $\alg{\L}_n^\infty = \Gamma(\mathbb Z\times_l \mathbb Z, (n,0))$;
\itemb the infinite Wajsberg chain $\alg {\L}_{n,k}=\Gamma(\mathbb Z\times_l \mathbb Z, (n,k))$;
\itemb the infinite bounded Wajsberg chain $[0,1]_{\alg{\L}} = \Gamma(\mathbf R,1)$, i.e.  the real interval with operations induced by the {\em Wajsberg $t$-norm}. i.e
$xy = \op{max}(x+y-1,0)$, $x \imp y = \op{min}(1+x-y,1)$;
\itemb the infinite bounded Wajsberg chain $\alg Q = \Gamma(\mathbb Q,1) = \mathbb Q \cap [0,1]_{\alg L}$;
\itemb if $U$ is a set of positive integers we denote by $\alg Q(U)$ the subalgebra of $\alg Q$ generated by $\{\alg {\L}_k: k \in U\}$, where $\alg {\L}_k$ is seen as an algebra with universe $\{\frac{0}{n}, \frac{1}{n}, \ldots, \frac{n}{n}\}$;
\itemb the unbounded Wajsberg chain $\alg C_\omega$ that has as universe the free group on one generator, where the product is the group product and $a^l \imp a^m = a^{\op{max}(m-l,0)}$;
\itemb  finally we fix once and for all an irrational number $\a \in [0,1]$ and we  let $X$ be the totally ordered dense subgroup of $\mathbb R$ generated by $\a$ and $1$; then $\alg S_n= \Gamma(X,n)$.
\end{enumerate}
The {\bf radical} of a bounded Wajsberg hoop $\alg A$, in symbols $\op{Rad}(\alg A)$, is the intersection of the maximal filters of $\alg A$; it is easy to see that $\op{Rad}(\alg A)$ is a cancellative basic subhoop of $\alg A$. We say that a bounded Wajsberg hoop $\alg A$ {\bf has rank $n$},
 if $\alg A/\op{Rad}(\alg A) \cong \alg{\L}_n$, otherwise it has infinite rank. For any bounded Wajsberg hoop $\alg A$, $d_\alg A$, called the {\bf divisibility index}, is the maximum $k$ such that $\alg{\L}_k$ is embeddable in $\alg A$ if any, otherwise $d_\alg A = \infty$.

\begin{lemma}\label{isp}
\begin{enumerate}
\item $\alg {\L}_n$ has rank $n$ and divisibility index $n$.
\item For any $k\ge 0$, $\alg{\L}_{n,k}$ has rank $n$ and $d_{\alg {\L}_{n,k}} = \gcd(n,k)$; in particular $d_{\alg L^\infty_n} = n$.
\item $\alg S_n$ has infinite rank and $\alg {\L}_k \in \SU(\alg S_n)$ if and only if $k \mathrel{|} n$; hence $d_{\alg S_n} =n$.
\item If $\alg A$ is a nontrivial totally ordered cancellative hoop  then $\II \SU\PP_u(\alg A) = \II\SU\PP_u(\alg C_\o)$.
\item  If $\alg A$ is a non-simple bounded Wajsberg chain of finite rank $n$, then $\II\SU\PP_u(\alg A) = \II\SU\PP_u(\alg {\L}^\infty_n)$ if and only if $d_\alg A= n$.
\item  If $\alg A$ is a non-simple bounded Wajsberg chain of finite rank
$k$,  then $d_\alg A$ divides $k$, and $\II\SU \PP_u(\alg A)= \II\SU\PP_u(\alg {\L}_{k,d_\alg A})$.
\item If $\alg A$ is a Wajsberg chain of infinite rank then
 $\II\SU\PP_u (\alg A) = \II\SU\PP_u(\alg S_n)$ if and only if $d_\alg A = n$.
\item If $\alg A$ is a bounded Wajsberg chain of infinite rank and $d_\alg A = \infty$, then
$\II\SU\PP_u(\alg A) = \II\SU\PP_u(\alg Q(U))$ where $U= \{k : \alg{\L}_k \in \SU(\alg A)\}$.
\item If $\alg A$ is an infinite subdirectly irreducible Wajsberg algebra
of finite rank $k$, then $\HH \SU \PP_u(\alg A)=\HH \SU
\PP_u(\alg {\L}_{k,k})$.
\item If $\alg A$ is a subdirectly irreducible Wajsberg algebra of
infinite rank, then $\HH \SU\PP_u(\alg A) = \HH \SU \PP_u([0,1]_{\alg {\L}})$.
\end{enumerate}
\end{lemma}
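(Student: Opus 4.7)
The plan is to split the ten items into three groups: items (1)--(3) are direct computations with the $\Gamma$ functor, item (4) is a classical result on cancellative hoops, and items (5)--(10) can be imported from Gispert's classification of ultrapower closures of MV-chains \cite{Gispert2002} via the polynomial equivalence between bounded Wajsberg hoops and Wajsberg algebras stated in the lemma preceding this statement.

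For (1), since $\alg{\L}_n = \Gamma(\mathbb Z, n)$ is simple, $\op{Rad}(\alg{\L}_n)=\{1\}$, so $\alg{\L}_n/\op{Rad}(\alg{\L}_n) \cong \alg{\L}_n$, giving rank $n$; and $\alg{\L}_k \hookrightarrow \alg{\L}_n$ iff $k \mathrel{|} n$, maximised at $k=n$, hence divisibility index $n$. For (2), the unique maximal filter of $\alg{\L}_{n,k} = \Gamma(\mathbb Z \times_l \mathbb Z, (n,k))$ corresponds to the projection $\mathbb Z \times_l \mathbb Z \to \mathbb Z$ onto the first coordinate, so the quotient by $\op{Rad}$ is $\alg{\L}_n$ and the rank is $n$. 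An embedding $\alg{\L}_j \hookrightarrow \alg{\L}_{n,k}$ corresponds to an ordered-group embedding $\mathbb Z \hookrightarrow \mathbb Z \times_l \mathbb Z$ sending $j \mapsto (n,k)$, i.e. to the existence of an element $(n/j, k/j) \in \mathbb Z \times_l \mathbb Z$, which happens iff $j \mathrel{|} n$ and $j \mathrel{|} k$; the maximum such $j$ is $\gcd(n,k)$, and specialising $k=0$ yields $d_{\alg{\L}_n^{\infty}} = n$. For (3), the subgroup $X = \mathbb Z + \mathbb Z\alpha$ is dense in $\mathbb R$, so $\alg S_n$ admits no finite-rank quotient, whence its rank is infinite; and $\alg{\L}_k \hookrightarrow \alg S_n$ iff $n/k \in X$, which by the irrationality of $\alpha$ holds iff $k \mathrel{|} n$, so $d_{\alg S_n} = n$.

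For (4), the point is that all nontrivial totally ordered cancellative hoops share the same universal first-order theory (cf.\ \cite{BlokFerr2000}), hence the same $\II\SU\PP_u$-closure, and $\alg C_\omega$ is one such algebra. For items (5)--(10), the polynomial equivalence between bounded Wajsberg hoops and Wajsberg algebras commutes with the operators $\II$, $\SU$, $\PP$, $\PP_u$ and $\HH$, so the corresponding theorems in \cite{Gispert2002} transfer verbatim: (5)--(6) correspond to Gispert's classification of the $\II\SU\PP_u$-classes of non-simple MV-chains of finite rank by the pair (rank, divisibility index), with $\alg{\L}_{k,k}=\alg{\L}_k^\infty$ as the distinguished representative when the index equals the rank; (7)--(8) are the infinite-rank counterparts, where the class is determined by the finite-divisibility profile $U$, collapsing to $\alg S_n$ when $U$ is the set of divisors of $n$ and to $\alg Q(U)$ in general; and (9)--(10) are the analogous $\HH\SU\PP_u$-classifications, the unbounded case of (10) being $[0,1]_{\alg{\L}}$ via Chang's completeness theorem. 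The main obstacle is really bookkeeping: matching Gispert's MV-algebra terminology with the Wajsberg hoop notions of rank and divisibility index introduced above, and checking that the polynomial equivalence preserves subdirect irreducibility and the class operators involved; once this is done, no essentially new argument beyond \cite{Gispert2002} is required.
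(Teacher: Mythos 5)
Your treatment of items (1)--(3) is fine and in fact more explicit than the paper, which simply declares (1) and (3) obvious and cites \cite{AglianoMontagna2003} for (2); for (4)--(8) you are, like the paper, essentially proving by citation (the paper takes (4) and (5) from \cite{AglianoMontagna2003} and (6)--(8) from \cite{Gispert2002}), and your transfer through the polynomial equivalence between bounded Wajsberg hoops and Wajsberg algebras is exactly what the lemma preceding the statement is for. The genuine gap is in (9) and (10). These are $\HH\SU\PP_u$-statements about subdirectly irreducible algebras, and they do not ``transfer verbatim'' from \cite{Gispert2002}, whose classification concerns universal classes, i.e.\ $\II\SU\PP_u$-closures of chains; this is precisely why the paper supplies a separate argument for these two items. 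Knowing that $\alg A$ and $\alg{\L}_{k,k}$ (resp.\ $[0,1]_{\alg{\L}}$) generate the same variety --- which is what the rank/divisibility data and Chang completeness give you --- does not by itself yield $\HH\SU\PP_u(\alg A)=\HH\SU\PP_u(\alg{\L}_{k,k})$: two generators of the same variety can in general have different $\HH\SU\PP_u$-classes.

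The missing step, which is the actual content of (9)--(10) in the paper, is this: both algebras are subdirectly irreducible ($\alg A$ by hypothesis, $\alg{\L}_{k,k}$ and $[0,1]_{\alg{\L}}$ by inspection), they generate the same variety (for (9), the variety of Wajsberg algebras whose divisibility index divides $k$, by Lemmas 6.1 and 6.3 of \cite{AglianoMontagna2003}; for (10), the whole variety of Wajsberg algebras, since any algebra of infinite rank generates it, Theorem 2.5 of \cite{AglianoPanti1999}), and Wajsberg hoops are congruence distributive, so J\'onsson's Lemma places each algebra inside the $\HH\SU\PP_u$-class of the other, giving the equality. Your sketch never invokes the subdirect irreducibility hypothesis nor J\'onsson's Lemma, so as written (9) and (10) are unsupported; the fix is short, but it is a different (and necessary) ingredient from the Gispert transfer you rely on for (5)--(8). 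A minor further point: your justification of (4), namely that all nontrivial totally ordered cancellative hoops have the same universal theory, is a correct reformulation of the claim but is itself the nontrivial content; the paper sources it from \cite{AglianoMontagna2003} rather than \cite{BlokFerr2000}.
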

\begin{proof} Claims (1)  and (3) are  obvious, while  (2), (4) and (5) are in \cite{AglianoMontagna2003}.
The proofs of (6), (7) and (8) are in \cite{Gispert2002}.

For (9) note that both $\alg A$ and $\alg {\L}_{k,k}$ are subdirectly
irreducible and generate the same variety, namely the variety of
Wajsberg algebras  $\alg A$ such that $d_\alg A$ divides $k$ (see Lemmas 6.1 and 6.3 in \cite{AglianoMontagna2003}).
For (10)  it is well-known that any two algebras of infinite rank
generate the entire variety of Wajsberg algebras (see for instance Theorem 2.5 in \cite{AglianoPanti1999}).
Since any variety of
Wajsberg hoops is congruence distributive, (9) and (10) follow from J\'onssons's Lemma.
\end{proof}

\begin{theorem}\label{classfinitechains} Let $\alg A = \bigoplus_{i=1}^n \alg A_{i}$ be any totally ordered basic hoop of finite index; then there is an unique (up to $\equiv$) $\alg B \in \mathcal F_{\sf BH}$ such that $\alg A \equiv \alg B$. More precisely
if $\alg B = \bigoplus_{i=1}^n \alg B_i$, then
$$
\alg B_i = \left\{
            \begin{array}{ll}
              \alg C_\omega, & \hbox{if $\alg A_i$ is cancellative;} \\
              \alg{\L}_n, & \hbox{if $\alg A_i \cong \alg{\L}_n$;} \\
              \alg{\L}_{k,d}, & \hbox{if $\alg A_i$ is infinite, has rank $k$ and $d=d_{\alg A_i}$;} \\
              \alg Q(U), & \hbox{if $\alg A_i$ has infinite rank and }\\
&\hbox{ $U=\{k: \alg{\L}_k\in \SU(\alg A)\}$ is infinite;}\\
              \alg S_k, & \hbox{if  $\alg A_i$ has infinite rank and $d_{\alg A_i} = k$}.
            \end{array}
          \right.
$$
for $i <n$ and
$$
\alg B_n = \left\{
             \begin{array}{ll}
               \alg{\L}_n, & \hbox{if $\alg A_n \cong \alg{\L}_n$;} \\
               \alg {\L}_{k,k}, & \hbox{if $\alg A_n$ is infinite and has rank $k$;} \\
               \alg C_\omega, & \hbox{if $\alg A_n$ is cancellative;} \\
               {[0,1]_{\alg {\L}}}, & \hbox{if $\alg A_n$ has infinite rank.}
             \end{array}
           \right.
$$
\end{theorem}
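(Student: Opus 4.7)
The plan is to reduce the theorem to a case-by-case application of Lemma \ref{isp}, using the ordinal sum decomposition (Theorem \ref{Dvu}) together with the already-stated criterion for $\equiv$ on $\mathcal F_{\sf BH}$: two chains of finite index are $\equiv$ iff they have the same number of components, with $\II\SU\PP_u(\alg A_i) = \II\SU\PP_u(\alg B_i)$ for $i<n$ and $\HH\SU\PP_u(\alg A_n) = \HH\SU\PP_u(\alg B_n)$ for the top component. This criterion in turn follows by inspecting the displayed formula for $\HH\SU\PP_u$ of an ordinal sum of chains of finite index and using that $\equiv$ is defined in terms of $\HH\SU\PP_u$.

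First I would write $\alg A = \bigoplus_{i=1}^n \alg A_i$ as its (unique) decomposition into totally ordered Wajsberg hoops, guaranteed by Theorem \ref{Dvu}. For each $i<n$, the component $\alg A_i$ is a totally ordered Wajsberg hoop, which is either cancellative (then by Lemma \ref{isp}(4) $\II\SU\PP_u(\alg A_i) = \II\SU\PP_u(\alg C_\omega)$) or bounded. In the bounded case, I split on the rank: if $\alg A_i \cong \alg{\L}_n$ choose $\alg B_i = \alg{\L}_n$; if $\alg A_i$ is infinite of finite rank $k$, apply Lemma \ref{isp}(6) to obtain $\II\SU\PP_u(\alg A_i) = \II\SU\PP_u(\alg{\L}_{k,d_{\alg A_i}})$; if $\alg A_i$ has infinite rank with $d_{\alg A_i} = k$ finite, use Lemma \ref{isp}(7) to take $\alg B_i = \alg S_k$; if $\alg A_i$ has infinite rank and $d_{\alg A_i} = \infty$, apply Lemma \ref{isp}(8) to take $\alg B_i = \alg Q(U)$ for the appropriate set $U$. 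These cases are exhaustive by the structure theory of totally ordered Wajsberg hoops.

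For the top component $\alg A_n$ the relevant operator is $\HH\SU\PP_u$, which collapses more classes together. If $\alg A_n$ is cancellative, all nontrivial totally ordered cancellative hoops generate the same variety as $\alg C_\omega$, so I take $\alg B_n = \alg C_\omega$. If $\alg A_n$ is finite, it is simple and isomorphic to some $\alg{\L}_n$. If $\alg A_n$ is infinite of finite rank $k$, Lemma \ref{isp}(9) gives $\HH\SU\PP_u(\alg A_n) = \HH\SU\PP_u(\alg{\L}_{k,k})$; and if $\alg A_n$ has infinite rank, Lemma \ref{isp}(10) gives $\HH\SU\PP_u(\alg A_n) = \HH\SU\PP_u([0,1]_{\alg{\L}})$. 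Assembling the $\alg B_i$ into $\alg B = \bigoplus_{i=1}^n \alg B_i$ then gives $\alg B \equiv \alg A$ by the componentwise criterion.

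Uniqueness up to $\equiv$ is automatic once existence is established, because the list of canonical Wajsberg hoops appearing in the statement is chosen so that distinct entries have distinct $\II\SU\PP_u$-classes (respectively distinct $\HH\SU\PP_u$-classes for the top slot); this too is encoded in Lemma \ref{isp}, whose items (1)--(3) exhibit the distinguishing invariants (rank, divisibility index, and the set $U$). The main obstacle is the organizational one: verifying that the classification in Lemma \ref{isp} really covers every totally ordered Wajsberg hoop, in particular making sure the non-cancellative, infinite-rank, infinite-divisibility-index case is handled by $\alg Q(U)$ and that no two of the canonical representatives are accidentally $\equiv$-equivalent. No nontrivial new calculations are required beyond invoking Lemma \ref{isp}; the theorem is essentially a packaging of its clauses into one statement.
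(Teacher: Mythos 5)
Your proposal is correct and follows essentially the same route as the paper: existence is obtained by applying the clauses of Lemma \ref{isp} componentwise together with the displayed $\HH\SU\PP_u$-formula for ordinal sums (equivalently, the componentwise criterion for $\equiv$ stated just before the theorem), and uniqueness by checking that the canonical representatives are pairwise distinguished by cancellativity, cardinality, rank, divisibility index and the set of embeddable $\alg{\L}_k$'s. The only difference is cosmetic: where you call uniqueness ``automatic'' modulo checking the invariants, the paper spells out the finite list of distinguishing conditions and notes each forces $\II\SU\PP_u(\alg B_i)\ne\II\SU\PP_u(\alg C_i)$ (resp.\ $\HH\SU\PP_u(\alg B_n)\ne\HH\SU\PP_u(\alg C_n)$), which is the same verification you gesture at.
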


\begin{proof} That $\HH\SU\PP_u(\alg A) = \HH\SU\PP_u(\alg B)$ follows from Lemma \ref{isp} and the description of $\HH\SU\PP_u(\alg A)$.
To prove uniqueness  let $\alg C = \bigoplus_{i=1}^m\alg C_i \in \mathcal F_{\sf BH}$; if $m\ne n$, then  $\II\SU\PP_u(\alg B) \ne \II\SU\PP_u(\alg C)$. If they have the same index $n$ and they are different, then they must differ on some Wajsberg component. The first possibility is that there exists an $i < n$ such that
\begin{enumerate}
\itemb $\alg B_i$ is cancellative and $\alg C_i$ is not (or viceversa);
\itemb for some $k$, $\alg B_i$ has at most $k$ elements and $\alg C_i$ has more (or viceversa);
\itemb $\alg {\L}_k$ is embeddable in $\alg B_i$ and it is not embeddable in $\alg C_i$ (or viceversa);
\itemb  $\{k: \alg {\L}_k\ \text{is embeddable in}\ \alg B_i\}$ is infinite  and $\{k: \alg {\L}_k$ is embeddable in $\alg C_i\}$ is not (or viceversa);
\itemb $\alg B_i$ and $\alg C_i$ have different divisibility index.
\end{enumerate}
But it is  easy to see that any of this conditions implies $\II\SU\PP_u(\alg B_i) \ne \II\SU\PP_u(\alg C_i)$, so $\HH\SU\PP_u(\alg B) \ne \HH\SU\PP_u(\alg C)$.
If this is not the case, then $\alg B_n$ and $\alg C_n$ must differ; a similar argument shows that in this case $\HH\SU\PP_u(\alg B_n) \ne \HH\SU\PP_u(\alg C_n)$.
Hence $\alg B$ is unique up to $\equiv$.
\end{proof}

Now we can characterize the subvarieties of $\mathsf{BH}$ that are strictly join irreducible (the reader can compare this with the analogous result for $\mathsf{BL}$-algebras proved in \cite{AguzzoliBianchi2020}).

\begin{theorem}\label{thm:sjibasic} A subvariety $\vv V$ of $\sf BH$ is strictly join irreducible if and only if it is equal to $\VV(\alg A)$ for some $\alg A \in \mathcal F_\vv V$ such that its Wajsberg components are either cancellative
or are bounded and have finite rank.
\end{theorem}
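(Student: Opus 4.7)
The plan is to reduce both implications to the canonical-form analysis given by Theorem \ref{classfinitechains}. For the necessity direction, suppose $\vv V$ is strictly join irreducible. By Birkhoff, $\vv V = \VV(\alg A)$ for some subdirectly irreducible $\alg A$, and $\alg A$ is totally ordered (as $\mathsf{BH}$ is representable) and of finite index (by Theorem \ref{thm:notsji}). Hence $\alg A \in \mathcal F_\vv V$, and Theorem \ref{classfinitechains} lets me assume $\alg A = \bigoplus_{i=1}^n \alg A_i$ is the canonical representative of its $\equiv$-class, so the task reduces to ruling out the three infinite-rank options for a Wajsberg component: $\alg Q(U)$ with $U$ infinite or $\alg S_k$ at positions $i<n$, and $[0,1]_{\alg{\L}}$ at the last position.

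In each of these cases I would exhibit a family $\{\alg A^{(\mu)}\}$ of chains in $\mathcal F_\vv V$ obtained by replacing the offending $\alg A_i$ with a finite-rank approximant while keeping the other summands fixed: $\alg Q(U')$ for $U'\subset U$ finite (so $\alg Q(U')\cong \alg{\L}_{\op{lcm}(U')}$), $\alg{\L}_{j,j}$ for $j\mid k$, and $\alg{\L}_{m!,m!}$ for growing $m$, respectively. Using Lemma \ref{isp} and the description $\HH\SU\PP_u(\alg A) = \bigcup_{i=1}^n(\bigoplus_{j<i}\II\SU\PP_u(\alg A_j)\oplus\HH\SU\PP_u(\alg A_i))$, one checks that $\VV(\alg A^{(\mu)})\subsetneq\VV(\alg A)$ for every $\mu$ (the replaced component loses its infinite-rank or irrational structure, hence $\alg A\notin \VV(\alg A^{(\mu)})$), while $\bigvee_\mu \VV(\alg A^{(\mu)})=\VV(\alg A)$ by the Komori--Gispert classification of subvarieties of Wajsberg algebras: any equation failing in $\alg A$ fails in a subdirectly irreducible chain of $\HH\SU\PP_u(\alg A)$ whose components may all be taken of finite rank, and such a chain sits in some $\VV(\alg A^{(\mu)})$. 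This contradicts strict join irreducibility and finishes the necessity.

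For sufficiency, assume each $\alg A_i\in\{\alg C_\omega,\alg{\L}_m,\alg{\L}_{k,d}\}$. By Theorem \ref{joindense} it suffices to produce an equation $\phi$ that fails in $\alg A$ but holds in every $\alg C\in\mathcal F_\vv V$ with $\VV(\alg C)\subsetneq\VV(\alg A)$. I would build $\phi$ modularly: each of $\VV(\alg C_\omega)$, $\VV(\alg{\L}_m)$ and $\VV(\alg{\L}_{k,d})$ is itself strictly join irreducible in the lattice of Wajsberg-hoop subvarieties (trivially for $\alg C_\omega$, whose only proper subvariety is the trivial one; for the other two by Komori--Gispert), so each admits a characterizing inequality $\phi_i$ failing in $\alg A_i$ but valid in every proper subvariety of $\VV(\alg A_i)$. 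These are then stitched together via a strictly decreasing witness tuple $x_1>x_2>\cdots>x_n$ whose entries are forced by suitable products and implications to lie in successive summands of the ordinal sum, yielding a single $\phi$ that encodes the whole profile of $\alg A$. The main obstacle is exactly this last step, namely translating component-wise defining inequalities into a single ordinal-sum equation in the residuated-lattice language; to handle it I plan to adapt the analogous construction carried out for $\mathsf{BL}$-algebras in \cite{AguzzoliBianchi2020}, since basic hoops are the zero-free subreducts of $\mathsf{BL}$-algebras and the canonical-form of Theorem \ref{classfinitechains} is the direct analogue of the classification used there.
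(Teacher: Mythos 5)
Your necessity argument breaks down in the $\alg S_k$ case. There you propose to replace the offending component by the \emph{finitely many} chains $\alg{\L}_{j,j}$ with $j\mid k$, and to conclude that the join of the resulting varieties is $\VV(\alg A)$. This cannot work: $\alg A$ is a chain, hence well-connected (Lemma \ref{prelinear}), so $\VV(\alg A)$ is join irreducible (Theorem \ref{mainwell}), and therefore it is never the join of \emph{finitely many} proper subvarieties. Concretely, by J\'onsson's Lemma the subdirectly irreducible $\alg A$ would have to lie in $\HH\SU\PP_u$ of one of your finitely many replacement chains, which is impossible since their $i$-th components have finite rank while $\alg S_k$ has infinite rank. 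Your fallback claim that ``any equation failing in $\alg A$ fails in a chain all of whose components have finite rank, and such a chain sits in some $\VV(\alg A^{(\mu)})$'' fails at the second step for the same reason: the finite-rank chains in $\VV(\alg A)$ with $i$-th component of rank $k$ but divisibility index $<k$ are not captured by the $\alg{\L}_{j,j}$. What is needed is an \emph{infinite} family keeping the rank equal to $k$ and varying the second parameter, i.e.\ replacing $\alg S_k$ by $\alg{\L}_{k,j}$ for all $j$, which is exactly what the paper does (with the details delegated to Proposition 8 of \cite{AguzzoliBianchi2020}). Your treatments of the $\alg Q(U)$ and $[0,1]_{\alg{\L}}$ cases are fine, if slightly more elaborate than necessary.

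For sufficiency your plan is to exhibit a single equation valid in every proper $\VV(\alg C)$, $\alg C\in\mathcal F_\vv V$, and failing in $\alg A$, by ``stitching together'' component-wise characterizing inequalities; but you yourself flag that this stitching is the main obstacle and only promise to adapt \cite{AguzzoliBianchi2020}. As it stands this is a genuine gap, and moreover the detour through an explicit equation is unnecessary. The paper's argument is much shorter: since $\alg A$ is totally ordered it is well-connected, so $\vv V=\VV(\alg A)$ is join irreducible by Theorem \ref{mainwell}; then one shows $\Lambda(\vv V)$ is \emph{finite}, because by Theorem \ref{joindense} it is the join dense completion of $\Phi_\vv V$, and the canonical representatives (Theorem \ref{classfinitechains}) of classes in $\mathcal F_\vv V/\mathord{\equiv}$ have index bounded by that of $\alg A$ and components that are either $\alg C_\o$ or of rank bounded by the maximal rank of the non-cancellative components of $\alg A$, so there are only finitely many of them. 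A join irreducible element of a finite lattice is automatically strictly join irreducible, which finishes the proof without constructing any equation. I recommend you repair the $\alg S_k$ case as above and replace your sufficiency plan by this finiteness argument.
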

\begin{proof} Let's prove necessity first. We can work modulo $\equiv$ i.e. we may suppose that $\alg A = \bigoplus_{i=1}^n \alg A_i$ has the ``canonical form'' suggested by Theorem \ref{classfinitechains}. So suppose that $\alg A_n=[0,1]_{\alg {\L}}$ and
for any $k\in \mathbb N$ let $\alg C_k = \bigoplus_{i=1}^{n-1} \alg A_i \oplus \alg{\L}_k$; then clearly $\alg C_k \not \equiv \alg A$ for all $k$. However
$\HH\SU\PP_u(\{\alg C_k: k \in \mathbb N\} = \HH\SU\PP_u(\alg A)$ since it is well known that $\HH\SU\PP_u(\{\alg{\L}_k: k \in \mathbb N\}) = \HH\SU\PP_u([0,1])$.
Hence $\bigvee_{k \in \mathbb N} \VV(\alg C_k) = \VV(\alg A)$ and $\VV(\alg A)$ is not strictly join irreducible.

Next suppose that for some $i<n$, $\alg A_i$ is bounded and has infinite rank; modulo $\equiv$ either $\alg A_i =  \alg Q(U)$, where $U=\{k: \alg{\L}_k\in \SU(\alg A)\}$ is infinite, or else $\alg A = \alg S_m$ for some $m$.
In the first case for $k \in U$ we let $\alg C_k$ to be the chain in which we have replaced the i-th component of $\alg A$ with $\alg {\L}_k$; clearly $\alg C_k \not\equiv \alg A$ for all $k$. However, since $\alg Q(U)$ is generated by
$\{\alg L_k: k \in U\}$ any equation that fails in $\alg A$ must fail in $\alg C_k$ for some $k$; this proves that $\bigvee_{k \in U}\VV(\alg C_k) = \VV(\alg A)$ and so $\VV(\alg A)$ is not strictly join irreducible.

For the second case we let $\alg C_k$ to be the chain in which we have replaced the i-th component of $\alg A$ with $\alg {\L}_{m,k}$; again clearly $\alg C_k \not\equiv \alg A$ for all $k$ and a very similar argument to the one above shows that $\bigvee_k \VV(\alg C_k) =\VV(\alg A)$ (but see Proposition 8 in \cite{AguzzoliBianchi2020} for details). Hence also in this case $\VV(\alg A)$ is not strictly join irreducible and the proof of necessity is
finished.

 Let $\vv V = \VV(\alg A)$ where $\alg A$ satisfies the hypotheses of the theorem; since $\alg A$ is well-connected by Lemma \ref{prelinear}, then $\vv V$ is join irreducible in $\Lambda (\sf BH)$ by Theorem \ref{mainwell}.
Therefore if we show that $\Lambda(\vv V)$ is finite, then the thesis will hold.  Now by Theorem \ref{joindense} it is enough to show that $\Phi_\vv V$ is finite, i.e. that $\mathcal F_\vv V/\mathop{\equiv}$ is a finite set.
Now any ``canonical" representative in $\mathcal F_\vv V/\mathop{\equiv}$ must be a chain of index $\le$ than the index of $\alg A$ and whose components are either $\alg C_\o$ or whose rank cannot exceed the maximum rank of the non cancellative components of $\alg A$. Clearly there are only finitely many choices, so $\mathcal F_\vv V/\mathop{\equiv}$ is finite as wished.
\end{proof}

\begin{corollary}\label{corbh} Let $\vv V$ be any variety of basic hoops; then the following are equivalent:
\begin{enumerate}
\item $\Lambda(\vv V)$ is finite;
\item  $\vv V$  satisfies $\lambda_n$ and for all $\alg A \in \mathcal F_\vv V$, $\VV(\alg A)$ is strictly join irreducible in $\Lambda(\vv V)$.
\end{enumerate}
\end{corollary}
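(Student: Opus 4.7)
The plan is to prove the two implications separately, leaning on the structural machinery already developed for totally ordered basic hoops of finite index.

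For (1) $\Rightarrow$ (2), the argument is essentially subsumed by Corollary \ref{correp}, since every variety of basic hoops is a variety of representable pseudohoops. If $\Lambda(\vv V)$ is finite, then as in the proof of Corollary \ref{correp}, $\vv V$ must satisfy $\lambda_n$ for some $n$ (otherwise there would be chains of arbitrarily large index generating distinct subvarieties). Moreover, for every $\alg A \in \mathcal F_\vv V$, the chain $\alg A$ is well-connected by Lemma \ref{prelinear}, hence $\VV(\alg A)$ is join irreducible by Theorem \ref{mainwell}. In a finite lattice join irreducibility coincides with strict join irreducibility, so $\VV(\alg A)$ is strictly join irreducible in $\Lambda(\vv V)$.

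For (2) $\Rightarrow$ (1), by Theorem \ref{joindense} it suffices to show that $\Phi_\vv V = \{\VV(\alg A) : \alg A \in \mathcal F_\vv V\}$ is finite. First, $\lambda_n$ gives that every $\alg A \in \mathcal F_\vv V$ has at most $n$ Wajsberg components. Next, since each single-component chain in $\vv V$ also lies in $\mathcal F_\vv V$, and since by assumption each such $\VV(\alg A)$ is strictly join irreducible, Theorem \ref{thm:sjibasic} forces every Wajsberg component appearing in any chain of $\mathcal F_\vv V$ to be either cancellative or bounded of finite rank. Up to $\equiv$, cancellative components collapse to $\alg C_\omega$, and for each finite rank $k$ there are only finitely many $\equiv$-classes of bounded Wajsberg chains of rank $k$ (namely $\alg L_k$ itself and the chains $\alg L_{k,d}$ for $d \mid k$, by Lemma \ref{isp} and Theorem \ref{classfinitechains}).

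The main obstacle, and the crux of the argument, is to bound the possible ranks of the bounded finite-rank components in $\vv V$. Suppose, for contradiction, that there is no such bound. Since for every bounded Wajsberg chain $\alg C$ of rank $k$ the quotient $\alg C/\op{Rad}(\alg C)$ is $\alg L_k$, we conclude that $\alg L_k \in \HH(\alg C) \subseteq \vv V$ for unboundedly many $k$. But $\HSP(\{\alg L_k : k \in \mathbb N\}) = \VV([0,1]_{\alg L})$, so $[0,1]_{\alg L} \in \vv V$. As $[0,1]_{\alg L}$ is itself a totally ordered Wajsberg hoop, hence a chain of index $1$, we have $[0,1]_{\alg L} \in \mathcal F_\vv V$; by Theorem \ref{thm:sjibasic}, however, $\VV([0,1]_{\alg L})$ is not strictly join irreducible (its single component is bounded of infinite rank), contradicting assumption (2). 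Hence the ranks are bounded by some $N$.

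Combining the three bounds (at most $n$ components per chain, each component either $\alg C_\omega$ or bounded of rank at most $N$, and for each such rank only finitely many $\equiv$-classes of Wajsberg components), the canonical classification of Theorem \ref{classfinitechains} yields only finitely many $\equiv$-classes in $\mathcal F_\vv V$. Therefore $\Phi_\vv V$ is finite, and by Theorem \ref{joindense} so is $\Lambda(\vv V)$, completing the proof.
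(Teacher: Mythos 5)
Your two directions assemble exactly the ingredients the paper intends for this corollary, which it states without a separate proof: (1)$\Rightarrow$(2) is Corollary \ref{correp} restricted to basic hoops, and (2)$\Rightarrow$(1) goes through Theorem \ref{joindense} by showing $\Phi_\vv V$ is finite, using $\lambda_n$, Theorem \ref{thm:sjibasic} and the classification given by Lemma \ref{isp} and Theorem \ref{classfinitechains}; closing by deducing finiteness of $\Lambda(\vv V)$ from finiteness of $\Phi_\vv V$ is exactly how the paper argues in the sufficiency half of Theorem \ref{thm:sjibasic}. You are also right that the real content of (2)$\Rightarrow$(1) is the bound on the ranks of the bounded components.

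The one step that is not justified as written is the passage from ``$\alg{\L}_k\in\vv V$ for unboundedly many $k$'' to ``$[0,1]_{\alg{\L}}\in\vv V$'': you quote $\HH\SU\PP(\{\alg{\L}_k: k\in\mathbb N\})=\VV([0,1]_{\alg{\L}})$, but you only know $\alg{\L}_k\in\vv V$ for $k$ ranging over some unbounded set $K$, and since $\alg{\L}_j$ embeds in $\alg{\L}_k$ only when $j\mathrel{|}k$, you cannot upgrade $K$ to all of $\mathbb N$ by taking subalgebras. The conclusion is still true, but it needs one more line: either observe that a nonprincipal ultraproduct of $\{\alg{\L}_k : k\in K\}$ is a Wajsberg chain of infinite rank lying in $\vv V$, and any such chain generates all of $\mathsf{WH}$ (Theorem 2.5 of \cite{AglianoPanti1999}, as used in Lemma \ref{isp}(10)), so $[0,1]_{\alg{\L}}\in\vv V$; or use the description of $\Lambda(\mathsf{WH})$ recalled in the examples section, by which every proper subvariety of $\mathsf{WH}$ is generated by finitely many chains among $\alg C_\o$, $\alg{\L}_n$, $\alg{\L}_n^\infty$ and hence contains $\alg{\L}_k$ only for the finitely many $k$ dividing one of the indices involved, so that unboundedly many $\alg{\L}_k$ force $\mathsf{WH}\sse\vv V$. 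With that one-line repair (and noting, as you implicitly do, that strict join irreducibility of a subvariety does not depend on whether the ambient lattice is $\Lambda(\vv V)$ or $\Lambda(\mathsf{BH})$, since any join decomposition involves only its own subvarieties), the argument is complete.
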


\section{Some examples}

The only subvariety of $\sf BH$ satisfying $\lambda_1$ is of course the variety $\sf WH$ of Wajsberg hoops; $\Lambda(\sf WH)$ has been totally described in \cite{AglianoPanti1999}. It turns out that a proper subvariety of $\sf WH$ is generated by finitely many totally ordered hoops among $\alg C_\omega$ and $\{\alg{\L}_n, \alg{\L}_n^\infty: n \in \mathbb N\}$. This (by J\'onnson Lemma) implies that for any proper subvariety $\vv V$ of $\sf WH$,
$\Lambda(\vv V)$ is finite and hence by Corollary \ref{corbh}, $\vv V$ is strictly join irreducible if and only if it is join irreducible if and only if it is generated by a single totally ordered Wajsberg hoop that is either cancellative or has finite rank (and so it is either $\alg{\L}_n$ or $\alg{\L}_n^\infty$ for some $n$).

Let $\mathsf{C} = \VV(\alg C_\o)$ be the variety of cancellative hoops and let for $n \in \mathbb N$, $\mathsf{W}_n = \VV(\alg{\L}_n)$ and $\mathsf{W}^\infty_n = \VV(\alg{\L}^\infty_n)$.  In Figure \ref{wajsberg} we see
a picture of $\Lambda(\vv W^\infty_6)$ where we have labeled the subdirectly irreducible elements.

\begin{figure}[htbp]
\begin{center}
\begin{tikzpicture}[scale=.9]
\draw (-1,1) --(0,2) -- (1,1) -- (0,0) -- (-1,1) -- (-2,2) -- (-1,3) -- (0,2)-- (1,3) -- (0,4) -- (-1,3);
\draw (-1,2) -- (2,5) -- (1,6) -- (-2,3) -- (-1,2);
\draw (0,3) -- (-1,4);
\draw (1,4) -- (0,5);
\draw (1,6) -- (0,7) -- (-1, 6) -- (0,5);
\draw (-1,1) -- (-1,2);
\draw (0,2) -- (0,3);
\draw (1,3) -- (1,4);
\draw (-2,2) -- (-2,3.5) -- (1,6.5) -- (1,6);
\draw (-1,3) -- (-1,4.5);
\draw (0,4) -- (0,5.5);
\draw (0,5.5) -- (-1, 6.5) -- (0,7.5) -- (1,6.5);
\draw (0,4) -- (-1,5) -- (-1,6.5);
\draw (1,6) -- (1,6.5);
\draw (0,7) -- (0,8.5);
\draw[fill] (0,0) circle [radius=0.05];
\draw[fill] (-1,1) circle [radius=0.05];
\draw[fill] (1,1) circle [radius=0.05];
\draw[fill] (-2,2) circle [radius=0.05];
\draw[fill] (-1,2) circle [radius=0.05];
\draw[fill] (0,2) circle [radius=0.05];
\draw[fill] (-2,3) circle [radius=0.05];
\draw[fill] (-1,3) circle [radius=0.05];
\draw[fill] (0,3) circle [radius=0.05];
\draw[fill] (1,3) circle [radius=0.05];
\draw[fill] (-2,3.5) circle [radius=0.05];
\draw[fill] (-1,4) circle [radius=0.05];
\draw[fill] (0,4) circle [radius=0.05];
\draw[fill] (1,4) circle [radius=0.05];
\draw[fill] (-1,4.5) circle [radius=0.05];
\draw[fill] (-1,5) circle [radius=0.05];
\draw[fill] (0,5) circle [radius=0.05];
\draw[fill] (2,5) circle [radius=0.05];
\draw[fill] (0,5.5) circle [radius=0.05];
\draw[fill] (-1,6) circle [radius=0.05];
\draw[fill] (1,6) circle [radius=0.05];
\draw[fill] (-1,6.5) circle [radius=0.05];
\draw[fill] (1,6.5) circle [radius=0.05];
\draw[fill] (0,7) circle [radius=0.05];
\draw[fill] (0,7.5) circle [radius=0.05];
\draw[fill] (0,8.5) circle [radius=0.05];
\node[left] at (-1,1) {\footnotesize  $\vv W_1$};
\node[right] at (1,1) {\footnotesize  $\vv C$};
\node[left] at (-2,2) {\footnotesize  $\vv W_2$};
\node[right] at (-1,2) {\footnotesize  $\vv W_3$};
\node[right] at (1,3) {\footnotesize  $\vv W_1^\infty$};
\node[left] at (-2,3.5) {\footnotesize  $\vv W_6$};
\node[left] at (-1,5) {\footnotesize  $\vv W_2^\infty$};
\node[right] at (2,5) {\footnotesize  $\vv W_3^\infty$};
\node[right] at (0,8.5) {\footnotesize  $\vv W_6^\infty$};
\end{tikzpicture}
\end{center}
\caption{$\Lambda(\vv W^\infty_6)$}\label{wajsberg}
\end{figure}

Now we would like to examine a larger lattice of subvarieties and we need some definitions.
A {\bf G\"odel hoop} is an idempotent basic hoop. G\"odel hoops are termwise equivalent to {\em relative Stone lattices} \cite{HechtKatrinak1972} and the variety is denote by $\sf GH$;  it is well-known that the only subdirectly irreducible in $\sf GH$ is $\mathbf 2$ (the two element residuated lattice) and that $\Lambda(\mathsf{GH})$ is a chain of type $\o+1$. More precisely, if $\alg G_n = \bigoplus_{i=1}^n \mathbf 2$, then the proper subvarieties of $\sf GH$ are $\mathsf{G}_n = \VV(\alg G_n)$.

 A {\bf product hoop} is a basic hoop satisfying
$$
(y \imp z) \join ((y \imp xy) \imp x) \app 1;
$$
The variety $\sf PH$ of product hoops has been examined in \cite{AFM}; it turns out that $\Lambda(\sf PH)$ has five elements and the proper non trivial subvarieties are $\sf C$, $\sf G_1$ and $\mathsf{C} \join \mathsf {G}_1$.
Moreover $\mathsf{PH} = \VV(\mathbf 2 \oplus \alg C_\o)$ so it is strictly join irreducible in $\Lambda(\mathsf{BH})$.
Let $\vv L = \mathsf{WH} \join \mathsf{PH} \join \mathsf{GH}$; we want to describe $\Lambda(\vv L)$.

Since basic hoops are congruence distributive, $\Lambda(\mathsf{BH})$ is distributive and since it is also dually algebraic it satisfies the infinite
distributive law
$$
x \join \bigwedge_{i\in I} y_i \approx \bigwedge_{i\in I}(x \join y_i).
$$
Let $\alg L$ be any lattice; an {\bf interval} is the set $[a,b] = \{c : a \le c \le b\}$. An interval $[a,b]$ is an {\em upper transpose}
of and interval $[c,d]$ (and $[c,d]$ is a {\em lower transpose} of $[a,b]$)
if $a \join d = b$ and $a \meet d = c$. Two intervals $[a,b]$, $[c,d]$ are
{\em $n$-step projective} if there exists intervals
$[a,b]=[x_0,y_0],[x_1,y_1],\dots,[x_n,y_n]=[c,d]$ such that $[x_i,y_i]$ is an
upper or lower transpose of $[x_{i+1},y_{i+1}]$. Two intervals are {\em
projective} if they are $n$-step projective for some $n$. It is a nontrivial
fact that in a distributive lattice ``projectivity = $2$-step projectivity".
 Moreover we say that $b$ is a {\em cover}
of $a$ and we write $a \prec b$ if $[a,b]=\{a,b\}$.

Let us apply these facts to $\vv L$. First observe that
$$
\mathsf{WH} \cap \mathsf{PH} = \mathsf{C} \join \mathsf{G}_1 = \VV(\mathbf 2,\alg C_\o);
$$
to simplify the notation we will call that variety $\mathsf{CPH}$.
Since $\mathsf{WH} \cap \mathsf{PH} = \mathsf{CPH}$ and $\mathsf{CPH}\prec \mathsf{PH}$
we have that $\mathsf{WH} \prec \mathsf{WH} \join \mathsf{PH}$ and
$[\mathsf{CPH},\mathsf{WH}] \cong [\mathsf{PH},\mathsf{WH} \join \mathsf{PH}]$.
The same conclusion holds if we substitute $\mathsf{WH}$ with  any variety of Wajsberg
hoops $\vv W$.

\begin{lemma} Let $\vv W$ any variety of Wajsberg hoops.
If  $\vv V \in [\mathsf{PH},\vv W \join\mathsf{PH}]$ then there exists a variety
$\vv W' \sse \vv W$ such that $\vv W' \join \mathsf{PH} = \vv V$.
If $\vv V \in [\mathsf{CPH},\vv W \join \mathsf{PH}]$, then either
$\mathsf{PH} \le \vv V$ or $\vv V \le \mathsf{WH}$.
\end{lemma}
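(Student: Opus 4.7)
The plan is to exploit the projective isomorphism $[\mathsf{CPH},\vv W] \cong [\mathsf{PH},\vv W \join \mathsf{PH}]$ just established, together with the distributivity of $\Lambda(\mathsf{BH})$ (which we already know holds since basic hoops are congruence distributive). The whole proof should be a short diagram-chase.

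For the first claim, I would simply set $\vv W' := \vv V \meet \vv W$. Then $\vv W' \sse \vv W$ is immediate, so the only thing to check is $\vv W' \join \mathsf{PH} = \vv V$. Using distributivity,
$$
\vv W' \join \mathsf{PH} = (\vv V \meet \vv W) \join \mathsf{PH} = (\vv V \join \mathsf{PH}) \meet (\vv W \join \mathsf{PH}).
$$
Since $\mathsf{PH} \le \vv V$ the first factor on the right equals $\vv V$, and since $\vv V \le \vv W \join \mathsf{PH}$ the meet also equals $\vv V$. This is precisely the inverse of the transposition map $\vv X \mapsto \vv X \join \mathsf{PH}$ from $[\mathsf{CPH},\vv W]$ to $[\mathsf{PH},\vv W \join \mathsf{PH}]$, so no new calculation beyond distributivity is needed.

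For the second claim, I would consider $\vv U := \vv V \meet \mathsf{PH}$, which sits in the interval $[\mathsf{CPH},\mathsf{PH}]$. But the description of $\Lambda(\mathsf{PH})$ recalled above (it has five elements, with $\mathsf{C} \join \mathsf{G}_1 = \mathsf{CPH}$ the unique coatom) tells us that $\mathsf{CPH} \prec \mathsf{PH}$, so the interval $[\mathsf{CPH},\mathsf{PH}]$ has only two elements. Hence either $\vv U = \mathsf{PH}$, which yields $\mathsf{PH} \le \vv V$, or $\vv U = \mathsf{CPH}$. In the latter case, distributivity applied to the identity $\vv V = \vv V \meet (\vv W \join \mathsf{PH})$ gives
$$
\vv V = (\vv V \meet \vv W) \join (\vv V \meet \mathsf{PH}) = (\vv V \meet \vv W) \join \mathsf{CPH},
$$
and both joinands are contained in $\mathsf{WH}$ (the first since $\vv W \sse \mathsf{WH}$, the second since $\mathsf{CPH} \sse \mathsf{WH}$), so $\vv V \le \mathsf{WH}$ as required.

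There is no real obstacle here: the entire argument rests on (i) the covering $\mathsf{CPH} \prec \mathsf{PH}$ in $\Lambda(\mathsf{PH})$, which is immediate from the already recorded description of $\Lambda(\mathsf{PH})$, and (ii) the finite distributive law in $\Lambda(\mathsf{BH})$. The only mildly delicate point is making sure that the transposition map in (1) is inverted by intersecting with $\vv W$, which is exactly the content of the standard projective-intervals lemma for distributive lattices already invoked in the preceding discussion.
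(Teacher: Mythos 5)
Your proof is correct and follows essentially the same route as the paper: a short distributivity computation in $\Lambda(\mathsf{BH})$ combined with the covering $\mathsf{CPH} \prec \mathsf{PH}$. The only (minor, and welcome) difference is your choice of witness $\vv W' = \vv V \meet \vv W$ in the first claim, which makes $\vv W' \sse \vv W$ immediate, whereas the paper works with $\mathsf{WH} \meet \vv V$ and transposes the covering upward in the second claim; otherwise the two arguments coincide.
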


\begin{proof} Let $\vv W'=\mathsf{WH} \cap \vv V$; then
\begin{align*}
\vv W' \join \mathsf{PH} &= (\mathsf{WH} \cap \vv V) \join \mathsf{PH} \\
&= (\mathsf{WH} \lor \mathsf{PH}) \cap (\vv V \lor \mathsf{PH}) =  (\mathsf{WH} \lor \mathsf{PH}) \cap \vv V = \vv V.
\end{align*}
For the second, let $\vv W' = \mathsf{WH} \cap \vv V$. By distributivity we see at
once that $[\vv W',\vv W' \join\mathsf{PH}]$ and $[\mathsf{CPH},\mathsf{PH}]$ are isomorphic.
It follows that $\vv W' \prec \vv W'\join \mathsf{PH}$ and hence either
$\vv V = \vv W'$ or $\vv V = \vv W' \join \mathsf{PH}$. The former
implies $\vv V \le \mathsf{WH}$ and from the latter
\begin{align*}
\vv V = \vv W' \join\mathsf{PH} &= (\mathsf{WH} \cap \vv V) \join \mathsf{PH}\\
&=(\mathsf{WH}\cap\mathsf{PH})\join(\vv V \join \mathsf{PH}) = \mathsf{CPH} \join(\vv V \join \mathsf{PH})
=\vv V \join \mathsf{PH}.
\end{align*}
Thus $\mathsf{PH} \le \vv V$.
\end{proof}

The same argument can be applied as it is to the intervals
 $[\vv W_2,\mathsf{GH}]$, $[\mathsf{CPH},\mathsf{CPH} \join \mathsf{GH}]$ and $[\mathsf{PH},\mathsf{PH} \join \mathsf{GH}]$.
Then to $[\mathsf{CPH},\mathsf{GH}\join\mathsf{PH}]$ and $[\mathsf{WH},\vv L]$,
and finally to  $[\mathsf{CPH},\mathsf{WH}\join\mathsf{PH}]$ and
$[\mathsf{GH},\vv L]$. Repeating this process for
any variety of Wajsberg hoops or G\"odel hoops we obtain a picture of the lattice of subvarieties of $\vv L$ (Figure \ref{l}).

\begin{figure}[htbp]
\begin{center}
\begin{tikzpicture}[scale=.9]
\draw (0,3) -- (0,2) -- (1,1) -- (0,0) -- (0,2);
\draw (-5,5) -- (-5,6) -- (-3,8) -- (-3,7);
\draw (2,3) -- (2,5);
\draw (5,6) -- (5,8);
\draw (0,10) -- (0,11);

\draw [thick][dotted] (0,1) to [out=165,in=280] (-5,5);
\draw [thick][dotted](0,2) to [out=165,in=285] (-5,5);
\draw [thick][dotted](0,2) to [out=130,in=340] (-5,5);
\draw [thick][dotted](0,3) to [out=165,in=285] (-5,6);
\draw [thick][dotted](0,3) to [out=130,in=340] (-5,6);

\draw [thick][dotted](2,3) to [out=165,in=280] (-3,7);
\draw [thick][dotted](2,4) to [out=165,in=285] (-3,7);
\draw [thick][dotted](2,4) to [out=130,in=340] (-3,7);
\draw [thick][dotted](2,5) to [out=165,in=285] (-3,8);
\draw [thick][dotted](2,5) to [out=130,in=340] (-3,8);

\draw [thick][dotted](5,6) to [out=165,in=280] (0,10);
\draw [thick][dotted](5,7) to [out=165,in=285] (0,10);
\draw [thick][dotted](5,7) to [out=130,in=340] (0,10);
\draw [thick][dotted](5,8) to [out=165,in=285] (0,11);
\draw [thick][dotted](5,8) to [out=130,in=340] (0,11);

\draw (0,1) -- (2,3);
\draw[dashed] (2,3) --(5,6);
\draw (0,2) -- (2,4);
\draw[dashed] (2,4) --(5,7);
\draw (0,3) -- (2,5);
\draw[dashed] (2,5) --(5,8);
\draw (-5,5) -- (-3,7);
\draw[dashed] (-3,7) --(0,10);
\draw[dashed] (-3,8) --(0,11);

\draw[fill] (0,0) circle [radius=0.05];
\draw[fill] (0,1) circle [radius=0.05];
\draw[fill] (0,2) circle [radius=0.05];
\draw[fill] (0,3) circle [radius=0.05];
\draw[fill] (1,1) circle [radius=0.05];
\draw[fill] (-5,5) circle [radius=0.05];
\draw[fill] (-5,6) circle [radius=0.05];

\draw[fill] (2,3) circle [radius=0.05];
\draw[fill] (2,4) circle [radius=0.05];
\draw[fill] (2,5) circle [radius=0.05];
\draw[fill] (-3,7) circle [radius=0.05];
\draw[fill] (-3,8) circle [radius=0.05];

\draw[fill] (5,6) circle [radius=0.05];
\draw[fill] (5,7) circle [radius=0.05];
\draw[fill] (5,8) circle [radius=0.05];
\draw[fill] (0,10) circle [radius=0.05];
\draw[fill] (0,11) circle [radius=0.05];

\node[left] at (0,.8) {\footnotesize  $\vv G_1$};
\node[right] at (1,.9) {\footnotesize  $\vv C$};
\node[right] at (0,3) {\footnotesize  $\mathsf{PH}$};
\node[right] at (2,2.9) {\footnotesize  $\vv G_2$};
\node[right] at (5,6) {\footnotesize  $\mathsf{GH}$};
\node[right] at (5,8) {\footnotesize  $\mathsf{GH} \join \mathsf{PH}$};
\node[left] at (-5,5) {\footnotesize  $\mathsf{WH}$};
\node[left] at (-5,6) {\footnotesize  $\mathsf{WH} \join \mathsf{PH}$};
\node[left] at (0,10) {\footnotesize  $\mathsf{WH} \join \mathsf{GH}$};
\node[above] at (0,11) {\footnotesize  $\mathsf{L}$};
\end{tikzpicture}
\end{center}
\caption{The lattice of subvarieties of $\vv L$\label{l}}
\end{figure}
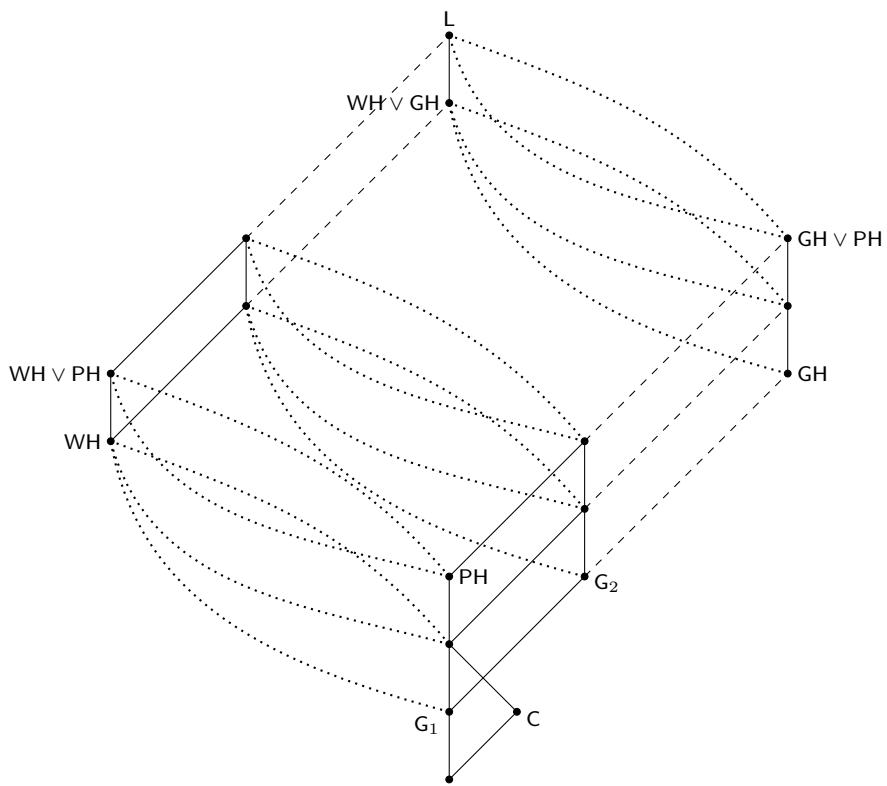

Clearly Figure \ref{l} is rather sketchy, since all the equal ovals are
isomorphic intervals and they are all connected.
However, using the same techniques as in \cite{AglianoPanti1999} it is not hard to prove the following:

\begin{lemma} Let $\vv V$ be a subvariety of $\vv L$ not above $\mathsf{WH}$ or $\mathsf{GH}$.
Then there are finite $I,J \sse \mathbb N$ (possibly empty) and an $n \in \mathbb N$ (possibly $0$) such that
$$
\vv V= \VV(\{\alg G_n,\two \oplus \alg C_\o,\alg {C}_i,\alg{C}^\infty_j: i
\in I, j\in J\})
$$
\end{lemma}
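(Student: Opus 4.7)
The plan is to exploit the distributivity of $\Lambda(\vv L)$ together with the known descriptions of the subvariety lattices of $\mathsf{WH}$, $\mathsf{PH}$, and $\mathsf{GH}$. Since basic hoops are congruence distributive and $\Lambda(\vv L)$ is dually algebraic, it is distributive, so applying the distributive law
$$\vv V = \vv V \cap (\mathsf{WH} \join \mathsf{PH} \join \mathsf{GH}) = (\vv V \cap \mathsf{WH}) \join (\vv V \cap \mathsf{PH}) \join (\vv V \cap \mathsf{GH}),$$
I decompose $\vv V = \vv V_W \join \vv V_P \join \vv V_G$. The hypotheses $\vv V \not\supseteq \mathsf{WH}$ and $\vv V \not\supseteq \mathsf{GH}$ at once force $\vv V_W \subsetneq \mathsf{WH}$ and $\vv V_G \subsetneq \mathsf{GH}$.

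Next I apply the known descriptions piece by piece. By the classification of $\Lambda(\mathsf{WH})$ in \cite{AglianoPanti1999}, any proper subvariety of $\mathsf{WH}$ is finitely generated by members of $\{\alg C_\o, \alg{\L}_i, \alg{\L}^\infty_j : i, j \in \mathbb{N}\}$, so I can pick finite $I, J \sse \mathbb{N}$ such that $\vv V_W$ is generated by $\{\alg{\L}_i, \alg{\L}^\infty_j : i \in I, j \in J\}$, together possibly with $\alg C_\o$. Since $\Lambda(\mathsf{GH})$ is the $(\omega{+}1)$-chain formed by the $\mathsf{G}_n = \VV(\alg G_n)$, there is a unique $n \in \mathbb{N}$ (possibly $0$) with $\vv V_G = \mathsf{G}_n$. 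Since $\Lambda(\mathsf{PH})$ has only the five elements $\{\mathsf{Triv}, \mathsf{C}, \mathsf{G}_1, \mathsf{CPH}, \mathsf{PH}\}$, either $\vv V_P = \mathsf{PH}$, in which case I add $\two \oplus \alg C_\o$ to the generating set, or $\vv V_P$ is one of the four strictly smaller subvarieties, each already contained in $\vv V_W \join \vv V_G$ and contributing no new generators.

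The final step is to absorb the auxiliary generators $\alg C_\o$ and $\two$ into the listed ones. Using that $\alg C_\o$ embeds into every $\alg{\L}^\infty_j$ with $j \ge 1$ (as the cancellative/infinitesimal component of the Mundici construction) and into $\two \oplus \alg C_\o$, and that $\two$ embeds into $\alg G_n$ for every $n \ge 1$ and into $\two \oplus \alg C_\o$, I can drop $\alg C_\o$ and $\two$ from the generating set in favor of the listed algebras, obtaining the claimed form.

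The main obstacle is the bookkeeping for this absorption step: in the various boundary cases (where $J = \emptyset$, $n = 0$, or $\vv V_P$ is small) one must verify that no auxiliary generator remains unabsorbed and that the prescribed generating set really captures $\vv V$. This is done by a careful case analysis along the lines of the analogous argument for $\Lambda(\mathsf{WH})$ in \cite{AglianoPanti1999}, exploiting the fact that whenever $\alg C_\o$ or $\two$ would be forced into $\vv V$ through $\vv V_P$, it is also already present in $\vv V_W$ or $\vv V_G$ and hence absorbed by the listed generators.
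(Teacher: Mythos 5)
The paper gives no actual proof of this lemma (it is left to the reader with a pointer to the techniques of \cite{AglianoPanti1999} and to the preceding interval/transposition analysis), so the comparison can only be with that sketch; your decomposition $\vv V=(\vv V\cap\mathsf{WH})\join(\vv V\cap\mathsf{PH})\join(\vv V\cap\mathsf{GH})$, justified by distributivity of $\Lambda(\mathsf{BH})$, together with the known descriptions of $\Lambda(\mathsf{WH})$, $\Lambda(\mathsf{GH})$ and $\Lambda(\mathsf{PH})$, is exactly the natural route and is consistent with what the paper does in the surrounding text. The steps up to and including the treatment of $\vv V_P$ are fine: if $\vv V_P\ne\mathsf{PH}$ then $\vv V_P\sse(\vv V\cap\mathsf{WH})\join(\vv V\cap\mathsf{GH})$, and the absorption of $\two$ is unproblematic, since $\two\in\vv V$ forces $\vv V\cap\mathsf{GH}=\vv G_n$ with $n\ge 1$, so $\two$ embeds into the listed generator $\alg G_n$.

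The genuine gap is in the absorption of $\alg C_\o$. Your mechanism only covers $\alg C_\o$ entering through $\vv V_P$; it does not cover the case where $\alg C_\o$ is a needed generator of $\vv V_W=\vv V\cap\mathsf{WH}$ while $J=\emptyset$ and $\mathsf{PH}\not\sse\vv V$. Concretely, take $\vv V=\vv C=\VV(\alg C_\o)$, or $\vv V=\vv C\join\vv G_k$, or $\vv V=\vv C\join\VV(\alg{\L}_2)$: these are subvarieties of $\vv L$ not above $\mathsf{WH}$ or $\mathsf{GH}$, they contain $\alg C_\o$, but they contain neither $\two\oplus\alg C_\o$ nor any $\alg{\L}^\infty_j$ with $j\ge1$ (their totally ordered members are only cancellative chains, G\"odel chains of bounded index, resp.\ finite Wajsberg chains), so there is no listed algebra into which $\alg C_\o$ can be absorbed; the members of the displayed generating set that lie in $\vv V$ generate a strictly smaller variety. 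Hence the claim that $\alg C_\o$ can always be dropped is false, and the ``careful case analysis'' you defer to cannot close this case. The correct conclusion of your argument is that $\vv V$ is generated by finitely many chains among $\alg G_n$, $\two\oplus\alg C_\o$, $\alg C_\o$, $\alg{\L}_i$, $\alg{\L}^\infty_j$, i.e.\ $\alg C_\o$ must be retained as a possible stand-alone generator (which is how the lemma's list has to be read for the statement to hold); no absorption step should be attempted, and with that correction your proof goes through and still yields the finiteness of $\Lambda(\vv V)$ used afterwards.
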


It follow that if $\vv V \sse \vv L$ and $\vv V$ is not above $\mathsf{WH}$ or $\mathsf{GH}$, then $\Lambda(\vv V)$ is finite,
so Corollary \ref{corbh} applies and then all the subvarieties generated by a chain of finite index are strictly join irreducible. Note that $\mathsf{GH}$ and $\mathsf{WH}$ are join irreducible but not strictly join irreducible
in $\Lambda(\vv V)$, since $\mathsf{WH}$ is generated by any Wajsberg chain of infinite rank and $\sf GH$ is generated by the chain
$[0,1]_\vv G$, where $xy=x\meet y$ in the usual ordering.

\section{Generalized rotations}

In this section we are going to use the results contained in \cite{AglianoUgolini2019a} to lift information about strictly join irreducible varieties of basic hoops to some subvarieties of $\mathsf{MTL}$-algebras. In particular, starting from a variety of basic hoops (or more in general, of representable $\mathsf{CIRL}$s), we can use their \emph{generalized rotation} (introduced in \cite{BMU2018}) to generate a variety of $\mathsf{MTL}$-algebras, and we are able to describe the relations between the respective lattices of subvarieties. Let us be more precise.

Consider a class of representable commutative residuated integral lattices ($\mathsf{RCIRL}$s for short) $\vv K$, a natural number $n \geq 2$, and a \textbf{term defined rotation}, i.e.: a unary term $\delta$ in the language of residuated lattices that on every algebra in $\alg A \in \vv K$ defines a lattice homomorphism that is also a nucleus (a closure operator satisfying $\delta(x) \cdot \delta(y) \leq \delta(x \cdot y$) for all $x, y \in A$).
It is easy to see that $\d(x) =x$ and $\d(x) =1$ are both term defined rotations on any class of $\mathsf{CIRL}$s. Then for every algebra $\alg A \in \vv K$ we can consider its \textbf{generalized ${\bf n}$-rotation}, whose definition we write here for the reader's convenience (for more details see \cite{BMU2018}).

The lattice structure is given by the disjoint union of $\alg A$ and $\delta[A]' = \{(\delta(a))': a \in A\}$ with the dualized order:  $$ a' < b \mbox{ and } a' \leq b' \mbox{ iff } b \leq a$$
to which we add a chain of $n-2$ elements strictly between $\alg A$ and $\delta[A]'$.
Intuitively, the resulting algebra has as skeleton a \L ukasiewicz chain of $n$ elements, {\bf \L}$_{n-1}$, where the top is $1$ and the bottom is $0 = 1'$. Let us call the elements of the \L ukasiewicz chain: $$0 = l_{0} < l_{1} < \ldots < l_{n-2} < l_{n-1} = 1.$$
$\alg A$ is a substructure, the products in $\delta[A]'$ are all defined to be the bottom element $0=1'$. Moreover $$a\cdot b' = (a \to b)', \quad a \to  b' = (\delta(b \cdot a))', \quad a' \to b' = b \to a$$
$$a l_{i} = l_{i} = l_{i}a, \quad b'l_{i} = 0 = l_{i}b' \quad ( \mbox{ for } i \notin\{0,n-1\}).$$
The obtained structure, that we shall denote with $\alg A^{\d_n}$, is an $\mathsf{MTL}$-algebra.
It is relevant to notice that the term defined rotation $\delta$ coincides exactly with the double negation of $\alg A^{\d_n}$ when restricted to $\alg A$. Moreover, $\alg A$ is the radical of $\alg A^{\d_{n}}$ (and its only maximal filter).

We can now define subvarieties of {\sf MTL} in the following way: let $\vv K$ be any class of representable $\mathsf{CIRL}$s, $n \geq 2$, $\d$ a term defined rotation, and we define
$$
\vv K^{\d_n} = \{\alg A^{\d_m}:m-1\mathrel{|}n-1,  \alg A \in \vv K \}.
$$
From now on we will write $\d$ for $\d_2$ (this coincides with the construction in \cite{AFU2017}). Whenever we consider a variety of $\mathsf{CIRL}$s $\vv V$, we will write $\vv V^{\delta_{n}}$ for the generated variety ${\bf V} (\vv V^{\delta_{n}})$. Moreover, we will write $\vv V_t$ for the class of totally ordered members in $\vv V$.

When we use $\delta$ as the identity map, we find interesting examples where the algebras obtained by rotations are involutive. We shall call this particular instance of the construction a \textbf{generalized disconnected $n$-rotation}. For instance, with $\vv V$ the variety of G\"odel hoops, its generalized disconnected $3$-rotation $\vv V^{\delta_{3}}$ is the variety of nilpotent minimum algebras $\mathsf{NM}$. Starting with a variety of basic hoops $\vv V$, the only disconnected $n$-rotation that is a subvariety of $\mathsf{BL}$-algebras is the variety generated by perfect MV-algebras, given by $2$-rotations of cancellative hoops (Proposition 3.4 \cite{AglianoUgolini2019a}). Thus with this particular construction we are able to move from basic hoops to $\mathsf{MTL}$-algebras. Notice that if  instead we use the term defined rotation $\d(x) =1$ on varieties of basic hoops, then  we obtain varieties of $\mathsf{BL}$-algebras. In particular, for $n = 2$, we get the variety of $\mathsf{SBL}$ of pseudocomplemented (or, equivalently, Stonean) $\mathsf{BL}$-algebras.
 Let us recall some results about the lattice of subvarieties of term-defined rotations obtained in  \cite{AglianoUgolini2019a}.

\begin{lemma}[\cite{AglianoUgolini2019a}]\label{lemma:HSProtation}
Let $\vv V$ be a class of $\mathsf{RCIRL}$s, $\delta$ a term-defined rotation and $n \geq 2$. Then the following are equivalent:
\begin{enumerate}
\item $\alg A \in \HH\SU\PP_u(\vv K)$
\item $\alg A^{\delta_{m}} \in \HH\SU\PP_u(\vv K^{\delta_{n}})$ for every $m: m-1 \mathrel{|} n-1$.
\end{enumerate}
\end{lemma}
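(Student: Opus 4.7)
The plan is to prove the equivalence by showing that each of the class operators $\PP_u$, $\SU$ and $\HH$ interacts well with the generalized rotation construction, so that one can commute them past $\delta_n$. The key recognition is that $\alg A$ is the radical of $\alg A^{\delta_n}$ (and is thus uniquely recoverable from it), while the skeleton of $\alg A^{\delta_n}$ is isomorphic to the \L ukasiewicz chain $\mathbf L_{n-1}$, whose subchains and quotients are controlled by the divisors of $n-1$.

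For ultraproducts, since $\delta$ is a term and the middle chain of $n-2$ elements is first-order-definable and of fixed finite size, one checks directly that $\prod_{i \in I}\alg A_i^{\delta_n}/U \cong \bigl(\prod_{i\in I}\alg A_i/U\bigr)^{\delta_n}$ for every ultrafilter $U$; consequently $\PP_u(\vv K^{\delta_n}) = (\PP_u(\vv K))^{\delta_n}$. For subalgebras, any $\alg B \in \SU(\alg C^{\delta_n})$ must be closed under the term operations (in particular under the involution and $\delta$), so its intersection with the skeleton is a subchain $\mathbf L_{m-1}$ with $m-1 \mid n-1$, and its intersection with the radical is a subalgebra $\alg D$ of $\alg C$; a routine check then gives $\alg B \cong \alg D^{\delta_m}$. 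For homomorphic images, I would use the bijection between congruences and congruence filters: a congruence filter of $\alg A^{\delta_n}$ is either contained in the radical $\alg A$, giving a quotient of the form $(\alg A/F)^{\delta_n}$, or it contains some skeleton element strictly below $1$, in which case it collapses the radical to $1$ and the quotient is $\mathbf L_{m-1}$ for some $m-1 \mid n-1$.

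Chaining the three commutations proves (1) $\Rightarrow$ (2): starting from a witness $\alg A \in \HH\SU\PP_u(\vv K)$ one lifts it stepwise through the rotated classes, using that any divisor of $n-1$ yields a well-defined $\delta_m$ inside $\alg A^{\delta_n}$ at the skeleton level. For (2) $\Rightarrow$ (1), one unwinds the three operators in reverse, reading off the radicals at each stage: the analysis above forces every algebra appearing to have the shape $\alg D^{\delta_k}$ with $\alg D$ built from $\vv K$ by the corresponding operator, so that passing to radicals yields $\alg A \in \HH\SU\PP_u(\vv K)$. The main obstacle will be the case analysis for $\HH$, in particular checking that congruence filters of a rotation split cleanly into radical-filters and skeleton-filters and that the divisibility condition $m-1 \mid n-1$ is exactly what is produced (and required) at each stage; this is where one relies most on the structural results of \cite{BMU2018} and \cite{AglianoUgolini2019a}.
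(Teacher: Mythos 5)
The paper itself does not prove this lemma: it is imported from \cite{AglianoUgolini2019a}, so your argument can only be measured against the standard proof given there, which proceeds exactly as you propose, by showing that each of $\PP_u$, $\SU$, $\HH$ commutes with the generalized rotation construction and then chaining the commutations, recovering $\alg A$ as the radical of $\alg A^{\delta_m}$ in the converse direction. Your outline is essentially that argument, and most of your ``routine checks'' really are routine: for $\SU$ one must also verify that the coradical part of a subalgebra $\alg B\le\alg C^{\delta_n}$ is exactly $\{(\delta(d))': d\in B\cap C\}$, which follows from the rotation identities $(\delta(c))'\to 0=\delta(c)$ and $d\to 0=(\delta(d))'$; for $\PP_u$ remember that $\vv K^{\delta_n}$ contains rotations with varying skeleton size $m-1\mathrel{|}n-1$, so the ultrafilter must first be used to concentrate on a single $m$ (there are only finitely many choices) before your isomorphism applies -- the first-order definability of the skeleton via $\gamma_n$ and direct indecomposability, exactly as invoked in the proof of Lemma~\ref{lemma:HSPu2}, takes care of this. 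Note also that your unwinding for (2)$\Rightarrow$(1) only needs the hypothesis for a single divisor $m$, which is fine since (2) is the stronger statement.

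The one step that is wrong as stated is the second branch of your case analysis for $\HH$: a congruence filter of $\alg A^{\delta_n}$ containing a skeleton element $l_i<1$ is improper, because the skeleton is an $n$-element \L ukasiewicz chain and hence $l_i^{\,n-1}=0$; the corresponding quotient is trivial, not $\mathbf{L}_{m-1}$. The way the bare skeleton actually arises as a homomorphic image is as the quotient by the radical $A$ itself, which is a proper congruence filter and falls under your first branch, since $\alg A^{\delta_n}/A\cong(\alg A/A)^{\delta_n}$ is the rotation of the trivial algebra. So the correct dichotomy is simply: proper congruence filters of $\alg A^{\delta_n}$ are exactly the congruence filters of the radical $\alg A$ (commutativity makes every filter a congruence filter, and one checks that such a filter never merges radical, skeleton and coradical classes), and every nontrivial quotient is of the form $(\alg A/F)^{\delta_n}$. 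With this repair your chaining argument goes through and yields the lemma.
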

In the case where $\vv K$ consists of a single algebra, we can actually obtain some extra information.
\begin{lemma}\label{lemma:HSPu2}
Let $\delta$ a term-defined rotation, $n \geq 2$ and $m-1 \mathrel{|} n-1$. Then $\alg D^{\delta_{m}} \in \HH\SU\PP_{u}(\alg B^{\delta_{n}})$ if and only if $\alg D^{\delta_{n}} \in \HH\SU\PP_{u}(\alg B^{\delta_{n}})$.
\end{lemma}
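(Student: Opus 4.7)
The plan is to derive both directions of the biconditional from Lemma \ref{lemma:HSProtation} applied to the singleton class $\vv K = \{\alg B\}$. With this choice $\{\alg B\}^{\d_n} = \{\alg B^{\d_k} : k - 1 \mathrel{|} n - 1\}$, and the cited lemma tells us, for each $l$ with $l - 1 \mathrel{|} n - 1$, that $\alg D^{\d_l} \in \HH\SU\PP_u(\{\alg B\}^{\d_n})$ is equivalent to $\alg D \in \HH\SU\PP_u(\{\alg B\})$. Instantiating this at $l = m$ and at $l = n$ (which is legal since $n - 1 \mathrel{|} n - 1$) yields immediately
\[
\alg D^{\d_m} \in \HH\SU\PP_u(\{\alg B\}^{\d_n}) \iff \alg D^{\d_n} \in \HH\SU\PP_u(\{\alg B\}^{\d_n}),
\]
so all that remains is to replace $\HH\SU\PP_u(\{\alg B\}^{\d_n})$ with $\HH\SU\PP_u(\alg B^{\d_n})$ on both sides.

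The inclusion $\HH\SU\PP_u(\alg B^{\d_n}) \sse \HH\SU\PP_u(\{\alg B\}^{\d_n})$ is immediate since $\alg B^{\d_n}$ is a member of $\{\alg B\}^{\d_n}$. For the reverse inclusion I would exhibit, for every $k$ with $k - 1 \mathrel{|} n - 1$, a subalgebra embedding $\alg B^{\d_k} \hookrightarrow \alg B^{\d_n}$. Once this is in hand, any ultraproduct of algebras in $\{\alg B\}^{\d_n}$ sits inside the corresponding ultrapower of $\alg B^{\d_n}$, so $\HH\SU\PP_u(\{\alg B\}^{\d_n}) \sse \HH\SU\PP_u(\alg B^{\d_n})$ follows without appeal to any deeper property.

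To build the embedding, write $n - 1 = s(k - 1)$. On the Wajsberg skeletons the map $l^{(k)}_i \mapsto l^{(n)}_{is}$ embeds the $k$-element \L ukasiewicz chain of $\alg B^{\d_k}$ into the $n$-element chain of $\alg B^{\d_n}$: the product $l^{(k)}_i \cdot l^{(k)}_j = l^{(k)}_{\max(i + j - (k-1),\, 0)}$ is sent to $l^{(n)}_{\max((i + j)s - (n-1),\, 0)}$, which coincides with $l^{(n)}_{is} \cdot l^{(n)}_{js}$, and the residual is handled analogously. Extending by the identity on the shared top copy of $\alg B$ and on the shared bottom copy $\d[B]'$ yields the proposed map $\alg B^{\d_k} \to \alg B^{\d_n}$.

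The main obstacle will be the routine but case-heavy verification that this map preserves $\cdot$ and $\to$ on all nine pairings of top, middle and bottom elements. The nontrivial mixed cases are governed by the defining formulas $a \cdot b' = (a \to b)'$, $a \to b' = (\d(b \cdot a))'$, $a' \to b' = b \to a$, together with the absorption rules $a l_i = l_i = l_i a$ and $b' l_i = 0 = l_i b'$; since these formulas take the same shape in $\alg B^{\d_k}$ and in $\alg B^{\d_n}$ and since $\alg B$ and $\d[B]'$ are copied verbatim, preservation reduces in every case to the skeleton computation already performed. With the embedding in place, the two directions of the lemma follow by chaining the equivalences delivered by Lemma \ref{lemma:HSProtation}.
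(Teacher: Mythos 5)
There is a genuine gap, and it sits exactly where the lemma has its content. You read Lemma \ref{lemma:HSProtation} as giving, for each single $l$ with $l-1 \mathrel{|} n-1$, the equivalence $\alg D^{\delta_{l}} \in \HH\SU\PP_{u}(\{\alg B\}^{\delta_{n}}) \Leftrightarrow \alg D \in \HH\SU\PP_{u}(\alg B)$. But as stated, condition (2) of that lemma is universally quantified: $\alg D \in \HH\SU\PP_{u}(\vv K)$ iff $\alg D^{\delta_{m}} \in \HH\SU\PP_{u}(\vv K^{\delta_{n}})$ for \emph{every} $m$ with $m-1 \mathrel{|} n-1$. So from membership of the single algebra $\alg D^{\delta_{m}}$ you cannot conclude $\alg D \in \HH\SU\PP_{u}(\alg B)$ by citing that lemma; indeed, modulo the easy subalgebra facts, that single-instance implication \emph{is} the statement you are asked to prove, so your first paragraph begs the question. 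The way the lemmas are used later confirms this reading: in the proof of Proposition \ref{prop:sjirotation} the paper first invokes Lemma \ref{lemma:HSPu2} to pass from one exponent to $\alg D^{\delta_{k}}$, then collects $\alg D^{\delta_{l}}$ for \emph{all} $l-1 \mathrel{|} k-1$ as subalgebras, and only then applies Lemma \ref{lemma:HSProtation} in the direction you want.

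What is missing is precisely the argument the paper gives for the left-to-right direction: one must control what the members of $\HH\SU\PP_{u}(\alg B^{\delta_{n}})$ look like. The paper shows that every ultrapower of $\alg B^{\delta_{n}}$ is directly indecomposable (a first-order property in $\mathsf{FL}_{ew}$-algebras), hence by Theorem 4.8 of \cite{BMU2018} is itself a generalized rotation, and that having an MV-skeleton of exactly $n$ elements is also first order (via the term $\gamma_{n}$); therefore every ultrapower has the form $\alg U^{\delta_{n}}$, and one can then track $\alg D^{\delta_{m}}$ through $\HH\SU$ of such a rotation and use Lemma \ref{lemma:HSProtation} to obtain $\alg D^{\delta_{n}} \in \HH\SU\PP_{u}(\alg B^{\delta_{n}})$. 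Nothing in your proposal replaces this step. The part you do verify, namely the skeleton embedding $\alg B^{\delta_{k}} \hookrightarrow \alg B^{\delta_{n}}$ for $k-1 \mathrel{|} n-1$ (so that $\HH\SU\PP_{u}(\{\alg B\}^{\delta_{n}}) = \HH\SU\PP_{u}(\alg B^{\delta_{n}})$), is correct but is the easy fact the paper already takes for granted; it yields the right-to-left direction and nothing more.
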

\begin{proof}
The right-to-left direction is a consequence of Lemma \ref{lemma:HSProtation}.
Now suppose $\alg D^{\delta_{m}} \in \HH\SU\PP_{u}(\alg B^{\delta_{n}})$ with $m-1 \mathrel{|} n-1$; we want to show that also $\alg D^{\delta_{n}} \in \HH\SU\PP_{u}(\alg B^{\delta_{n}})$. Notice that $\alg B^{\delta_{n}}$ is directly indecomposable (see \cite{BMU2018}), which is a first order property in ${\sf FL_{ew}}$-algebras, since it corresponds to the fact that the only complemented elements are $0$ and $1$, which can clearly be expressed with a first order sentence. Thus all ultrapowers of $\alg B^{\delta_{n}}$ are also directly indecomposable.
All directly indecomposable algebras in a variety generated by generalized rotations are generalized rotations themselves (Theorem 4.8, \cite{BMU2018}).
Moreover, having an MV-skeleton of $n$ elements can also be expressed as a first order property, using the term $\gamma_{n}$ defined in \cite{BMU2018}. In generalized rotations the term $\g_n$ is the identity map only on the elements of the MV-skeleton, evaluates to $1$ in the elements of the radical and evaluates to $0$ in the elements of the coradical. Therefore if one wishes to describe with a first order sentence that a generalized rotation has an MV-skeleton of $n$ elements, one can say that there exist $n$ elements $x_{1} \ldots x_{n}$, pairwise different, such that $\gamma_{n}(x_{i}) = x_{i}$ for $i = 1 \ldots k$, and that are the only ones with such property. So we can conclude that any ultrapower of $\alg B^{\delta_{n}}$ is a generalized rotation $\alg U^{\delta_{n}}$. Thus $\alg D^{\delta_{m}} \in \HH\SU(\alg U^{\delta_{n}})$ and of course $\alg D^{\delta_{m}} \in \HH\SU(\alg U^{\delta_{m}})$. Thus, via Lemma \ref{lemma:HSProtation}, $\alg D \in \HH\SU(\alg U)$ and then $\alg D^{\delta_{n}} \in \HH\SU(\alg U^{\delta_{n}})$; this  implies $\alg D^{\delta_{n}} \in \HH\SU\PP_{u}(\alg B^{\delta_{n}})$ and the proof is completed.
\end{proof}
The following is a consequence of Lemma \ref{lemma:HSProtation}.
\begin{proposition}\label{prop:sub}[\cite{AglianoUgolini2019a} ]
Let $\vv V, \vv W$ be subvarieties of $\mathsf{RCIRL}$, $\delta$ a term-defined rotation and $n \geq 2$.
Then $\vv V^{\delta_{m}} \subseteq \vv W^{\delta_{n}}$ iff $m-1 \mathrel{|} n-1$ and $\vv V\subseteq \vv W$.
\end{proposition}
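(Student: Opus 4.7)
The backward direction is essentially unpacking definitions: if $\vv V \subseteq \vv W$ and $m-1 \mathrel{|} n-1$, then every generator $\alg A^{\delta_k}$ of $\vv V^{\delta_m}$ (with $\alg A \in \vv V$ and $k-1 \mathrel{|} m-1$) already lies in the generating class of $\vv W^{\delta_n}$, by transitivity of divisibility together with $\vv V \subseteq \vv W$. Taking the generated variety on both sides then yields $\vv V^{\delta_m} \subseteq \vv W^{\delta_n}$.

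For the forward direction, assume $\vv V^{\delta_m} \subseteq \vv W^{\delta_n}$. My plan is to handle both conclusions with a single structural observation: directly indecomposable algebras in a rotation variety are themselves generalized rotations of algebras in the base variety (Theorem 4.8 of \cite{BMU2018}, as invoked in the proof of Lemma \ref{lemma:HSPu2}). Concretely, take any subdirectly irreducible $\alg A \in \vv V$; since $\vv V \subseteq \mathsf{RCIRL}$, $\alg A$ is totally ordered, and therefore $\alg A^{\delta_m}$ is a chain, in particular directly indecomposable. As $\alg A^{\delta_m} \in \vv W^{\delta_n}$ by hypothesis, the cited theorem furnishes $\alg A' \in \vv W$ and $m'$ with $m'-1 \mathrel{|} n-1$ such that $\alg A^{\delta_m} \cong (\alg A')^{\delta_{m'}}$. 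Uniqueness of the generalized rotation decomposition, where the radical $\alg A$ and the $m$-element MV-skeleton are both canonically recoverable from $\alg A^{\delta_m}$, forces $m'=m$ and $\alg A' \cong \alg A$, so we obtain $\alg A \in \vv W$ and $m-1 \mathrel{|} n-1$ simultaneously. Birkhoff's theorem promotes the former to $\vv V \subseteq \vv W$, and the divisibility is also delivered directly by applying the same argument to the trivial algebra $\mathbf{1} \in \vv V$, whose $m$-rotation is the MV-chain $\alg{\L}_{m-1}$.

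The main obstacle is invoking Theorem 4.8 in precisely the form ``every directly indecomposable member of $\vv W^{\delta_n}$ is isomorphic to $(\alg A')^{\delta_{m'}}$ for some $\alg A' \in \vv W$ and $m'-1 \mathrel{|} n-1$'', together with the canonical recovery of both $\alg A'$ and $m'$. If that packaging is not immediately available from the cited result, a self-contained backup is to apply J\'onsson's lemma inside $\vv W^{\delta_n}$ to place $\alg A^{\delta_m}$ in $\HH\SU\PP_u(\{\alg B^{\delta_k}: \alg B \in \vv W,\ k-1 \mathrel{|} n-1\})$, and then exploit the first-order definability of the radical and of the MV-skeleton (via the term $\gamma_n$ appearing in the proof of Lemma \ref{lemma:HSPu2}) to check that the radical operator commutes with $\HH\SU\PP_u$ in the required sense. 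This extracts $\alg A \in \HH\SU\PP_u(\vv W) = \vv W$ and the skeleton constraint $m-1 \mathrel{|} n-1$ in one go.
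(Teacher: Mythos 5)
Your proposal is correct in substance, but it takes a genuinely different route from the paper, which gives no standalone argument: it derives the proposition directly from Lemma \ref{lemma:HSProtation} (the transfer of $\HH\SU\PP_u$-membership between an algebra and its generalized rotations), citing \cite{AglianoUgolini2019a}. You instead work through directly indecomposable algebras: rotate a subdirectly irreducible (hence, by representability, totally ordered) $\alg A \in \vv V$, note that $\alg A^{\delta_m}$ is a chain and thus directly indecomposable, and then recover $\alg A$ and $m$ from the radical and the MV-skeleton, obtaining $\vv V \subseteq \vv W$ and $m-1 \mathrel{|} n-1$ at once (your separate appeal to $\mathbf 1^{\delta_m} = \alg{\L}_{m-1}$ is redundant but harmless, and it covers the degenerate case where $\vv V$ is trivial). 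The one genuine caveat is exactly the one you flag: Theorem 4.8 of \cite{BMU2018}, in the form quoted in this paper, only says that directly indecomposable members of a variety generated by generalized rotations are generalized rotations of \emph{some} algebra; it does not by itself place the radical in $\vv W$ nor force the skeleton size $k$ to satisfy $k-1 \mathrel{|} n-1$. That packaging is obtained by combining J\'onsson's lemma with the first-order arguments used in the proof of Lemma \ref{lemma:HSPu2} (ultraproducts of $k$-rotations of members of $\vv W$ are again $k$-rotations of members of $\vv W$, with radical and skeleton definable via $\gamma_k$), which is essentially the content of Lemma \ref{lemma:HSProtation} itself --- i.e. your backup plan coincides with the paper's (and \cite{AglianoUgolini2019a}'s) intended route, and the same strengthened structural fact is invoked by the paper in the proof of Proposition \ref{prop:sjirotation}. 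So your main argument, once that fact is justified, buys a more concrete structural picture (explicit recovery of base algebra and skeleton), while the paper's route is shorter because the transfer lemma already encapsulates all of it; your backward direction is the same definition-unpacking in both.
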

As a special case notice that, given  a subvariety $\vv K$ of $\mathsf{RCIRL}$ and $\d$  a term defined rotation, the lattice of subvarieties of $\vv K$ and the lattice of subvarieties of $\vv K^\d$ are isomorphic. In the case of varieties obtained as rotations of basic hoops, we can also say something more.
\begin{lemma}\label{lemma:rotationcompletion}
For any subvariety $\vv V$ of $\sf BH$, $\Lambda(\vv V^{\delta_{n}})$ is the join dense completion of $\Phi_\vv V^{\delta_{n}} = \{{\bf V}(\alg A^{\delta_{m}}) : \alg A \in \mathcal{F}_{\vv V}, m-1 \mathrel{|} n-1 \}$.
\end{lemma}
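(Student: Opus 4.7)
The plan is to mimic the proof of Theorem \ref{joindense}, replacing the direct reduction to chains of finite index by a two-step reduction that passes through the generalized rotation construction. Let $\vv W \subseteq \vv V^{\delta_n}$ be an arbitrary subvariety; the goal is to show that
$$
\vv W = \bigvee\bigl\{\VV(\alg A^{\delta_m}): \alg A \in \mathcal F_\vv V,\ m-1 \mid n-1,\ \VV(\alg A^{\delta_m}) \subseteq \vv W\bigr\}.
$$
First I would use that $\vv V^{\delta_n}$ is a subvariety of $\MTL$, hence representable and congruence distributive, so by Birkhoff's theorem $\vv W$ is the join of the varieties $\VV(\alg B)$ for $\alg B$ ranging over the subdirectly irreducible members of $\vv W$; each such $\alg B$ is totally ordered and in particular directly indecomposable. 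By Theorem 4.8 of \cite{BMU2018} (invoked in the proof of Lemma \ref{lemma:HSPu2}), every directly indecomposable algebra in $\vv V^{\delta_n}$ is a generalized rotation $\alg C^{\delta_m}$ with $\alg C$ a totally ordered member of $\vv V$ and $m - 1 \mid n-1$. Therefore $\vv W = \bigvee\{\VV(\alg C^{\delta_m}): \alg C^{\delta_m} \in \vv W \text{ subdirectly irreducible}\}$.

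Next I would reduce each $\VV(\alg C^{\delta_m})$ to the join of varieties generated by its finitely generated subalgebras. Any equation $\f$ failing in $\alg C^{\delta_m}$ fails on a finite tuple of witnesses, and hence on the finitely generated subalgebra $\alg D \le \alg C^{\delta_m}$ they generate. Subalgebras of directly indecomposable members of $\MTL$ remain directly indecomposable (a complement in $\alg D$ still complements in $\alg C^{\delta_m}$, forcing it to be $0$ or $1$), so applying Theorem 4.8 of \cite{BMU2018} once more gives $\alg D \cong (\alg C')^{\delta_{m'}}$ for some totally ordered $\alg C' \in \vv V$ and some $m'$ with $m'-1 \mid n-1$. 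Because $\alg D$ is finitely generated, so is $\alg C'$ as a basic hoop (its elements are obtained from the radical of $\alg D$, and the radical is finitely generated as soon as $\alg D$ is). Being a finitely generated totally ordered basic hoop, $\alg C'$ has finite index by Lemma \ref{index}, hence $\alg C' \in \mathcal F_\vv V$. Consequently $\VV(\alg C^{\delta_m}) = \bigvee\{\VV((\alg C')^{\delta_{m'}}): \alg C' \in \mathcal F_\vv V,\ m'-1 \mid n-1,\ (\alg C')^{\delta_{m'}} \le \alg C^{\delta_m}\}$, and combining with the first step produces the desired join-dense representation of $\vv W$.

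The main obstacle is the structural claim used in the second paragraph: that a finitely generated subalgebra $\alg D$ of $\alg C^{\delta_m}$ is of the form $(\alg C')^{\delta_{m'}}$ with $\alg C'$ finitely generated in $\vv V$ and $m'-1 \mid n-1$. This blends two ingredients from \cite{BMU2018}, namely the preservation of direct indecomposability under subalgebras in $\MTL$ and the characterization of directly indecomposable members of $\vv V^{\delta_n}$ as rotations, together with a bookkeeping check that the underlying basic hoop $\alg C'$ inherits finite generation from $\alg D$. Once these points are established, the argument is the direct analogue of Theorem \ref{joindense}, performed one rotation level at a time.
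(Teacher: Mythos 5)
Your argument is correct and is essentially the paper's own proof: the paper likewise identifies the (finitely generated) totally ordered members of $\vv V^{\delta_{n}}$ as generalized rotations $\alg A^{\delta_{m}}$ with $\alg A \in \mathcal{F}_{\vv V}$ and $m-1 \mathrel{|} n-1$, and then repeats the reduction of Theorem \ref{joindense} (every subvariety is generated by its chains, every chain lies in the variety generated by its finitely generated subalgebras). Your detour through subdirectly irreducible, directly indecomposable algebras via Theorem 4.8 of \cite{BMU2018}, and the bookkeeping that the radical of a finitely generated rotation is a finitely generated (hence finite-index) basic hoop, is exactly the content the paper compresses into the remark that a finitely generated chain $\alg B \cong \alg A^{\delta_{m}}$ has radical $\alg A$ with only finitely many sum-irreducible components.
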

\begin{proof} Let $\alg B$ be a totally ordered algebra in $\vv V^{\delta_{n}}$, then $\alg B$ is isomorphic to a generalized rotation $\alg A^{\delta_{m}}$ with $m-1 \mathrel{|} n-1$. Moreover, if $\alg B$ is finitely generated, then $\alg A \in \mathcal{F}_{\vv V}$, since $\alg B$ is completely determined by its MV-skeleton {\bf \L}$_{m-1}$ and the totally ordered basic hoop that is its radical $\alg A$, that cannot have infinitely many sum-irreducible components if it is finitely generated. Then the proof continues as in Lemma \ref{joindense}, and we can conclude that $\Lambda(\vv V^{\delta_{n}})$ is the join dense completion of $\Phi_\vv V^{\delta_{n}}$.
\end{proof}

\begin{lemma}\label{lemma:rotationjoin}
Let $\vv V, \vv W_{1}\ldots \vv W_{k}$ be subvarieties of $\mathsf{RCIRL}$, $\delta$ a term-defined rotation and $n \geq 2$. Then $$\vv V = \bigvee_{i \in I} \vv W_{i} \;\;\mbox{ if and only if }\;\;\vv V^{\delta_{n}} = \bigvee_{i \in I} \vv W_{i}^{\delta_{n}} $$
\end{lemma}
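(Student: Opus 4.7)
The plan is to prove both implications by reducing to the subdirectly irreducible (equivalently, totally ordered) members on each side and then transferring $\HH\SU\PP_u$-membership across the generalized rotation construction by means of Lemma \ref{lemma:HSProtation}. In each direction one inclusion is immediate from Proposition \ref{prop:sub} (applied with $m = n$): if $\vv V = \bigvee_{i \in I} \vv W_i$ then $\vv W_i \subseteq \vv V$ yields $\vv W_i^{\delta_n} \subseteq \vv V^{\delta_n}$, hence $\bigvee_i \vv W_i^{\delta_n} \subseteq \vv V^{\delta_n}$; dually, $\vv V^{\delta_n} = \bigvee_i \vv W_i^{\delta_n}$ forces each $\vv W_i \subseteq \vv V$, hence $\bigvee_i \vv W_i \subseteq \vv V$.

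For the nontrivial inclusion in the forward direction, set $\vv K = \bigcup_{i \in I} \vv W_i$. Since $\vv V^{\delta_n} \subseteq \mathsf{MTL}$, it is generated by its subdirectly irreducibles, so it suffices to place each such $\alg C \in \vv V^{\delta_n}$ inside $\bigvee_i \vv W_i^{\delta_n}$. Subdirectly irreducible $\mathsf{MTL}$-algebras are totally ordered and in particular directly indecomposable, and directly indecomposable members of a variety generated by generalized rotations are themselves generalized rotations (Theorem 4.8 of \cite{BMU2018}, already invoked in Lemma \ref{lemma:HSPu2}); so $\alg C \cong \alg B^{\delta_k}$ with $\alg B \in \vv V$ totally ordered and $k-1 \mid n-1$. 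As $\mathsf{RCIRL}$ is congruence distributive and $\vv V = \VV(\vv K)$, J\'onsson's lemma gives $\alg B \in \HH\SU\PP_u(\vv K)$, and Lemma \ref{lemma:HSProtation} then yields $\alg B^{\delta_k} \in \HH\SU\PP_u(\vv K^{\delta_n})$. Observing that the generating class $\vv K^{\delta_n} = \bigcup_i (\vv W_i)^{\delta_n}$ (at the level of rotations, before taking $\VV$), this gives $\alg C \in \HH\SU\PP_u(\bigcup_i (\vv W_i)^{\delta_n}) \subseteq \VV(\bigcup_i \vv W_i^{\delta_n}) = \bigvee_i \vv W_i^{\delta_n}$, as desired.

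The converse runs symmetrically. Take a totally ordered $\alg A \in \vv V$; since representable varieties are generated by their totally ordered members, it suffices to show $\alg A \in \bigvee_i \vv W_i$. The rotation $\alg A^{\delta_n}$ is totally ordered, hence subdirectly irreducible, and belongs to $\vv V^{\delta_n} = \bigvee_i \vv W_i^{\delta_n} = \VV(\vv K^{\delta_n})$. Congruence distributivity of $\mathsf{MTL}$ and J\'onsson's lemma place $\alg A^{\delta_n}$ in $\HH\SU\PP_u(\vv K^{\delta_n})$, and Lemma \ref{lemma:HSProtation} used in the converse direction, with $m = n$, pulls this back to $\alg A \in \HH\SU\PP_u(\vv K) \subseteq \VV(\vv K) = \bigvee_i \vv W_i$.

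The main subtlety I expect is the bookkeeping between two objects that look alike but are not: the class $\vv K^{\delta_n}$ of generalized rotations of members of $\vv K$, and the generated variety $\bigvee_i \vv W_i^{\delta_n}$ on the $\mathsf{MTL}$-side. Everything hinges on the equality $\vv K^{\delta_n} = \bigcup_i (\vv W_i)^{\delta_n}$ of \emph{generating classes}, which is what lets the transfer provided by Lemma \ref{lemma:HSProtation} line up correctly with the join on the $\mathsf{MTL}$-side; once this identification is made, the proof is a routine combination of J\'onsson's lemma with the structural theorem of \cite{BMU2018} on directly indecomposable algebras in varieties generated by rotations.
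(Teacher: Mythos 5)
Your overall strategy---reduce both sides to their subdirectly irreducible members and transfer $\HH\SU\PP_u$-membership across the rotation via Lemma \ref{lemma:HSProtation} together with J\'onsson's lemma, with the trivial inclusions handled by Proposition \ref{prop:sub}---is essentially the paper's, but two steps need repair. First, in the converse direction you claim that $\alg A^{\delta_n}$ is ``totally ordered, hence subdirectly irreducible''. That implication is false: chains in these varieties are in general only \emph{finitely} subdirectly irreducible (the standard G\"odel chain is a totally ordered algebra that is not subdirectly irreducible). The step can be saved either by invoking the FSI form of J\'onsson's lemma, or, more in line with the paper, by starting from a subdirectly irreducible $\alg A \in \vv V$ (these generate $\vv V$) and using the observation made at the start of the paper's proof that $\alg A$ is subdirectly irreducible if and only if $\alg A^{\delta_n}$ is, since $\alg A$ is a congruence filter of $\alg A^{\delta_n}$.

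Second, and more substantively: in the forward direction you take an arbitrary subdirectly irreducible $\alg C \in \vv V^{\delta_n}$, write $\alg C \cong \alg B^{\delta_k}$, and then apply J\'onsson's lemma to $\alg B$ inside $\vv V = \VV(\vv K)$. For this you need $\alg B \in \vv V$ (and $\alg B$ si or FSI), but Theorem 4.8 of \cite{BMU2018}, as you invoke it, only says that a directly indecomposable member of the generated variety is a generalized rotation of \emph{some} algebra; it does not by itself place the radical $\alg B$ in $\vv V$. Note that $\alg B$ is merely a subalgebra of the $0$-free reduct of $\alg C$, and equations valid in $\vv V$ need not hold in $\vv V^{\delta_n}$ (e.g. idempotency for G\"odel hoops fails in their rotations), so membership of $\alg B$ in $\vv V$ is not automatic. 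The fact is true, and the paper itself uses it without proof elsewhere (Proposition \ref{prop:sjirotation}, Lemma \ref{lemma:rotationcompletion}), but it requires an additional argument in the spirit of Lemmas \ref{lemma:HSProtation} and \ref{lemma:HSPu2} (constant skeleton size on ultraproducts, behaviour of radicals under subalgebras and quotients). The paper's own proof sidesteps the issue entirely: it observes that $\vv V^{\delta_n}$ is generated by the rotations $\alg A^{\delta_n}$ of subdirectly irreducible $\alg A \in \vv V$ and runs a chain of equivalences on these designated generators, never decomposing an arbitrary si member of $\vv V^{\delta_n}$. Either supply the missing justification that $\alg B \in \vv V$, or restructure the forward direction along the paper's lines; with one of these fixes your argument goes through.
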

\begin{proof}
First, notice that given a $\mathsf{RCIRL}$ $\alg A $, $\alg A$ is subdirectly irreducible if and only if $\alg A^{\delta_{m}} $ is subdirectly irreducible. Indeed, in the generalized rotation construction $\alg A$ is a congruence filter of $\alg A^{\delta_{m}} $, thus the former has a unique atomic filter if and only if the latter does.

Let us show the left-to-right direction first, thus we suppose that $\vv V = \bigvee_{i \in I} \vv W_{i}$. Now, as a consequence of Lemma \ref{lemma:HSProtation}, $\vv V^{\d_n}$ is the variety generated by algebras in the set $\{\alg A^{\d_n}:  \alg A \in \vv V, \alg A$ subdirectly irreducible$\}$. Notice that in particular all the algebras $\alg A^{\d_{m}}$ for $m-1\mathrel{|}n-1$ are subalgebras of $\alg A^{\d_n}$.
Now we consider a subdirectly irreducible algebra $\alg A \in \vv V$, and we call $\mathsf{siW}_{i}^{\delta_{n}}$ and $\mathsf{siW}_{i}$ the class of subdirectly irreducible algebras in $\vv W_{i}^{\delta_{n}}$ and $\vv W_{i}$ respectively. We get:
$$
\begin{array}{rcl}
\alg A^{\d_{n}} \in \vv V^{\d_{n}} & \Leftrightarrow& \alg A^{\delta_{m}} \in \vv V^{\d_{n}}\mbox{ for all } m-1\mathrel{|}n-1\\
& \Leftrightarrow& \alg A \in \vv V\\
& \Leftrightarrow& \alg A \in  \displaystyle\bigvee_{i \in I} \vv W_{i}\\
& \Leftrightarrow&  \alg A \in {\bf HSP}_{u}(\displaystyle\bigcup_{i \in I}\mathsf{siW}_{i})\\
& \Leftrightarrow&  \alg A ^{\delta_{m}}\in {\bf HSP}_{u}(\displaystyle\bigcup_{i \in I} \mathsf{siW}_{i}^{\delta_{n}}) \mbox{ for all } m-1\mathrel{|}n-1\\
& \Leftrightarrow&  \alg A ^{\delta_{m}}\in \displaystyle\bigvee_{i \in I} \vv W_{i}^{\delta_{n}} \mbox{ for all } m-1\mathrel{|}n-1\\
& \Leftrightarrow& \alg A ^{\delta_{n}}\in \displaystyle\bigvee_{i \in I} \vv W_{i}^{\delta_{n}}.
\end{array}
$$
The first and last equivalence follow from the fact that $\alg A^{\delta_{m}}$ is a subalgebra of $\alg A^{\delta_{n}}$ iff $m-1 \mathrel{|} n-1$; the second and the third to last equivalences follow from Lemma \ref{lemma:HSProtation}; the third one follows by hypothesis. So it remains to show  the fourth equivalence (the proof of the second to last one being analogous).
Since $\alg A$ is subdirectly irreducible and  residuated lattices are congruence distributive, by J\`onnson's Lemma $\alg A \in {\bf V}(\vv K)$ iff $\alg A \in {\bf HSP}_{u}(\vv K)$ for any class of algebras $\vv K$. The equivalence then follows from the fact that $\bigcup_{i \in I} \mathsf{siW}_{i}$ is a set of generators for $\bigvee_{i \in I} \vv W_{i}$. Indeed, clearly ${\bf V}(\bigcup_{i \in I} \mathsf{siW}_{i}) \subseteq \bigvee_{i \in I}\vv W_{i}$. Moreover, for all $i \in I$, $\vv W_{i} \subseteq {\bf V}(\bigcup_{i = 1 \ldots k} \mathsf{siW}_{i})$ thus ${\bf V}(\bigcup_{i \in I} \mathsf{siW}_{i}) = \bigvee_{i \in I} \vv W_{i}$.

We have showed that $\vv V^{\delta_{m}} = \bigvee_{i \in I }\vv W_{i}^{\delta_{n}}$.
The proof of the right-to-left direction uses the very same equivalences and is left to the reader.
\end{proof}
Now we can prove the following.

\begin{proposition}\label{prop:sjirotation}
Let $\vv V \subseteq \vv W$ be subvarieties of $\mathsf{BH}$, $\d$ a term defined rotation and $m,n \geq 2$. Then $\vv V^{\delta_{m}}$ is strictly join irreducible in the lattice of subvarieties of $\vv W^{\delta_{n}}$ iff $\vv V$ is strictly join irreducible in the lattice of subvarieties of $\vv W$ and $m-1 \mathrel{|} n-1.$
\end{proposition}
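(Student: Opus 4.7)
The plan is to prove the two implications separately: the forward direction will be an essentially mechanical translation using Lemma \ref{lemma:rotationjoin} and Proposition \ref{prop:sub}, while the backward direction will require combining the classification result Theorem \ref{thm:sjibasic} with the join-dense completion from Lemma \ref{lemma:rotationcompletion} and the join-irreducibility criterion of Corollary \ref{cor:mainwell}.

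For ($\Rightarrow$), assuming $\vv V^{\delta_m}$ is strictly join irreducible in $\Lambda(\vv W^{\delta_n})$, the inclusion $\vv V^{\delta_m} \subseteq \vv W^{\delta_n}$ together with Proposition \ref{prop:sub} immediately yields $m-1 \mathrel{|} n-1$ and $\vv V \subseteq \vv W$. To obtain strict join irreducibility of $\vv V$, I would suppose toward contradiction that $\vv V = \bigvee_{i \in I} \vv V_i$ in $\Lambda(\vv W)$ with each $\vv V_i \subsetneq \vv V$. Applying Lemma \ref{lemma:rotationjoin} to the rotation $\delta_m$ gives $\vv V^{\delta_m} = \bigvee_{i \in I} \vv V_i^{\delta_m}$, and Proposition \ref{prop:sub} ensures that each $\vv V_i^{\delta_m}$ is a proper subvariety of $\vv V^{\delta_m}$ contained in $\vv W^{\delta_n}$, contradicting the strict join irreducibility of $\vv V^{\delta_m}$.

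For ($\Leftarrow$), assume $\vv V$ is strictly join irreducible in $\Lambda(\vv W)$ and $m-1 \mathrel{|} n-1$; since strict join irreducibility depends only on the sublattice of subvarieties of $\vv V$ itself, $\vv V$ is also strictly join irreducible in $\Lambda(\sf BH)$. Theorem \ref{thm:sjibasic} then supplies a chain $\alg A_0 \in \mathcal F_\vv V$ with $\vv V = \VV(\alg A_0)$ whose Wajsberg components are either cancellative or bounded of finite rank; crucially, the proof of that theorem also establishes that $\mathcal F_\vv V/{\equiv}$ is finite. Thus $\vv V^{\delta_m} = \VV(\alg A_0^{\delta_m})$, and $\alg A_0^{\delta_m}$ is subdirectly irreducible because the generalized rotation construction transfers subdirect irreducibility (as observed in the proof of Lemma \ref{lemma:rotationjoin}). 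Since $\vv V^{\delta_m}$ is a representable variety (being a subvariety of $\mathsf{MTL}$) it satisfies ($G$), and Corollary \ref{cor:mainwell} yields that $\vv V^{\delta_m}$ is join irreducible.

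It remains to upgrade join irreducibility to strict join irreducibility, and the plan is to show that $\Lambda(\vv V^{\delta_m})$ is finite. By Lemma \ref{lemma:rotationcompletion}, $\Lambda(\vv V^{\delta_m})$ is the join-dense completion of $\Phi_\vv V^{\delta_m} = \{\VV(\alg A^{\delta_l}) : \alg A \in \mathcal F_\vv V,\ l-1 \mathrel{|} m-1\}$; since both $\mathcal F_\vv V/{\equiv}$ and the set $\{l : l-1 \mathrel{|} m-1\}$ are finite, $\Phi_\vv V^{\delta_m}$ is finite, whence its join-dense completion $\Lambda(\vv V^{\delta_m})$ is finite as well. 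In any finite lattice, join irreducibility coincides with strict join irreducibility, so $\vv V^{\delta_m}$ is strictly join irreducible in $\Lambda(\vv V^{\delta_m})$, and by locality also in $\Lambda(\vv W^{\delta_n})$. The most delicate points to keep in view are the preservation of subdirect irreducibility through the rotation construction and the locality of the SJI property; both are essentially granted by the earlier lemmas but must be invoked with some care.
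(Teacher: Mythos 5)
Your overall architecture is the same as the paper's: the forward direction via Proposition \ref{prop:sub} and Lemma \ref{lemma:rotationjoin} is correct (and your use of the ``iff'' in Proposition \ref{prop:sub} to get properness of the $\vv V_i^{\delta_m}$ is a legitimate shortcut for the paper's appeal to Lemmas \ref{lemma:HSProtation} and \ref{lemma:HSPu2}), and the backward direction is organized exactly as in the paper: join irreducibility plus finiteness of $\Lambda(\vv V^{\delta_m})$ via Lemma \ref{lemma:rotationcompletion}. However, two steps in the backward direction are asserted precisely where the technical work lies. First, you claim $\alg A_0^{\delta_m}$ is subdirectly irreducible ``because rotation transfers subdirect irreducibility''; that transfer is an equivalence with the subdirect irreducibility of $\alg A_0$, and Theorem \ref{thm:sjibasic} does \emph{not} guarantee that the generating chain it produces is subdirectly irreducible (a chain of finite index whose top component is, say, an arbitrary cancellative Wajsberg chain may have no minimal nontrivial congruence filter). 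You either need to pass to the canonical representative of Theorem \ref{classfinitechains}, whose admissible top components ($\alg{\L}_n$, $\alg{\L}_{k,k}$, $\alg C_\o$) are subdirectly irreducible, or argue as the paper does: $\alg A_0^{\delta_m}$ is totally ordered, hence well-connected by Lemma \ref{prelinear}, and Theorem \ref{mainwell} gives join irreducibility. Relatedly, the identity $\vv V^{\delta_m}=\VV(\alg A_0^{\delta_m})$ is not immediate from the definitions (the variety $\vv V^{\delta_m}$ is generated by rotations of \emph{all} members of $\vv V$); the paper proves it by observing that subdirectly irreducible members of $\vv V^{\delta_m}$ are rotations $\alg C^{\delta_k}$ of subdirectly irreducibles $\alg C\in\vv V$ and then transferring $\alg C\in\HH\SU\PP_u(\alg A_0)$ via Lemma \ref{lemma:HSProtation}.

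Second, your finiteness argument for $\Phi_\vv V^{\delta_m}$ has a genuine gap: from the finiteness of $\mathcal F_\vv V/\mathord{\equiv}$ and of the set of admissible $l$ you can only conclude that $\Phi_\vv V^{\delta_m}$ is finite if you know that $\equiv$-equivalent chains have rotations generating the same variety, i.e.\ that $\HH\SU\PP_u(\alg B)=\HH\SU\PP_u(\alg C)$ implies $\HH\SU\PP_u(\alg B^{\delta_k})=\HH\SU\PP_u(\alg C^{\delta_k})$. This is not automatic, since the rotation is applied to the algebra rather than to the variety, and it is exactly the step to which the paper devotes the final paragraph of its proof, combining Lemma \ref{lemma:HSPu2} (whose own proof needs the ultrapower and first-order analysis of generalized rotations) with Lemma \ref{lemma:HSProtation}. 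Once you supply this invariance argument (and fix the subdirect-irreducibility point above, e.g.\ by the well-connectedness route), your proposal coincides with the paper's proof.
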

\begin{proof}
The left-to-right direction is relatively easy to see by contraposition. Indeed, first of all if $m-1$ does not divide $n-1$ then $\vv V^{\delta_{m}}$ is not a subvariety of $\vv W^{\delta_{n}}$ by Proposition \ref{prop:sub}. Moreover, if $\vv V$ is not strictly join irreducible, then $\vv V = \bigvee_{i \in I} \vv W_{i}$ for some proper subvarieties $\vv W_{i}, i \in I$, but then $\vv V^{\delta_{m}} = \bigvee_{i \in I} \vv W_{i}^{\delta_{m}}$ by Lemma \ref{lemma:rotationjoin}. Next $\vv W_{i}^{\delta_{m}}$ is a subvariety of $\vv V^{\delta_{m}}$ by Proposition \ref{prop:sub}.
It is also proper subvariety: let $\alg A \in \vv V, \alg A \notin \vv W_{i}$. Then by Lemmas \ref{lemma:HSProtation} and \ref{lemma:HSPu2}, $\alg A^{\delta_{m}} \in \vv V^{\d_{m}}$ but $\alg A^{\delta_{m}} \notin \vv W_{i}^{\delta_{m}}$. Thus $\vv V^{\delta_{m}}$ is not strictly join irreducible.

Suppose now that $\vv V$ is strictly join irreducible and $m-1 \mathrel{|} n-1$. Then $\vv V = {\bf V}(\alg A)$ for some chain $\alg A$ of finite order whose Wajsberg components are either cancellative or are bounded and have finite rank (Theorem \ref{thm:sjibasic}). We can show that $\vv V^{\delta_{m}} = {\bf V}( \alg A^{\delta_{m}})$. One inclusion is obvious, while for the other one we use the fact that subdirectly irreducible algebras in $\vv V^{\delta_{m}}$ are generalized rotations of subdirectly irreducible algebras in $\vv V$. So let's take a subdirectly irreducible algebra $\alg C^{\delta_{k}} \in \vv V^{\delta_{m}}$, with $(k-1) \mathrel{|} (m-1)$. Then $\alg C$ is a subdirectly irreducible algebra in $\vv V = {\bf V}(\alg A)$, which implies that $\alg C \in \HH\SU\PP_{u}(\alg A)$. Via Lemma \ref{lemma:HSProtation}, this implies that $\alg C^{\delta_{k}} \in \HH\SU\PP_{u}(\alg A^{\delta_{m}})$, and thus every subdirectly irreducible in $\vv V^{\delta_{m}}$ belongs to ${\bf V}( \alg A^{\delta_{m}})$, which completes the proof of the equality  $\vv V^{\delta_{m}} = {\bf V}( \alg A^{\delta_{m}})$.

We have shown that $\vv V^{\delta_{m}}$ is generated by a totally ordered algebra, which by Corollary \ref{prelinear} is also well-connected; by applying Theorem \ref{mainwell} we get that $\vv V^{\delta_{m}}$ is join irreducible. We now show that $\Lambda(\vv V^{\delta_{m}})$ is finite, which will settle the proof. In particular, given Lemma \ref{lemma:rotationcompletion}, it suffices to show that $\Phi_{\vv V}^{\delta_{n}}$ is finite. In order to prove this, we will show that given $\alg B$ and $\alg C$ in $ \mathcal{F}_{\vv V}$, for all $k-1 \mathrel{|} m-1$ we get
$$
\mbox{ if }\quad \HH\SU\PP_{u}(\alg B) = \HH\SU\PP_{u}(\alg C)\quad \mbox{ then }\quad  \HH\SU\PP_{u}(\alg B^{\delta_{k}}) = \HH\SU\PP_{u}(\alg C^{\delta_{k}}).
$$
Since $\vv V$ is strictly join irreducible, $\Phi_{\vv V}$ is finite as shown in the proof of Theorem \ref{thm:sjibasic}, which then implies that also $\Phi_{\vv V}^{\delta_{n}}$ is finite.

So let now $\alg B$ and $\alg C$ in $\mathcal{F}_{\vv V}$ such that $\HH\SU\PP_{u}(\alg B) = \HH\SU\PP_{u}(\alg C)$. We show that for any $k$ such that  $k-1 \mathrel{|} m-1$, $\HH\SU\PP_{u}(\alg B^{\delta_{k}}) = \HH\SU\PP_{u}(\alg C^{\delta_{k}}).$

Indeed consider an algebra in $\HH\SU\PP_{u}(\alg B^{\delta_{k}})$; then it is equal to $\alg D^{\delta_{j}}$ for some totally ordered basic hoop $\alg D$ and $j-1 \mathrel{|} k-1$. Thus by Lemma \ref{lemma:HSPu2}, $\alg D^{\delta_{k}} \in \HH\SU\PP_{u}(\alg B^{\delta_{k}})$, and then $\alg D^{\delta_{l}} \in \HH\SU\PP_{u}(\alg B^{\delta_{k}})$ for all $l -1 \mathrel{|} k-1$ (since they are subalgebras of $\alg D^{\delta_{k}}$); hence by Lemma \ref{lemma:HSProtation}, $\alg D \in  \HH\SU\PP_{u}(\alg B) = \HH\SU\PP_{u}(\alg C)$ by hypothesis, and then (using again Lemma \ref{lemma:HSProtation}), $\alg D^{\delta_{j}} \in \HH\SU\PP_{u}(\alg C^{\delta_{k}})$, which implies that $\HH\SU\PP_{u}(\alg B^{\delta_{k}}) \subseteq \HH\SU\PP_{u}(\alg C^{\delta_{k}})$. The equality follows from the fact that we can use the same reasoning starting with an algebra in $\HH\SU\PP_{u}(\alg C^{\delta_{k}})$. Therefore  the proof is completed.
\end{proof}

Moreover, we can also use the following result to obtain information on splitting algebras in varieties generated by generalized rotations. We recall that an algebra $\alg A \in \vv V$ is splitting in the lattice of subvarieties of $\vv V$ if there is a subvariety $\vv W_{\alg A}$ of $\vv V$ (the conjugate variety of $\alg A$) such that for any variety $\vv U \subseteq \vv V$ either $\vv U \subseteq \vv W_{\alg A}$ or $\alg A \in \vv U$.
\begin{proposition}\label{prop:sjisplitting}
Let $\alg A$ be a totally ordered basic hoop of finite index such that its Wajsberg components are either cancellative or bounded with finite rank, and let $\vv V = {\bf V}(\alg A)$. Then for all suitable term defined rotations $\d$, $\alg A^{\d_m}$ is splitting in $\vv V^{\d_n}$ for any $m$ such that $m-1 \mathrel{|} n-1$.
\end{proposition}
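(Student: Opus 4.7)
The plan is to deduce the result directly from Proposition \ref{prop:sjirotation} via the finite distributive lattice structure of $\Lambda(\vv V^{\d_n})$. Proposition \ref{prop:sjirotation} already tells us that under the stated hypotheses on $\alg A$, the variety $\vv V^{\d_m} = {\bf V}(\alg A^{\d_m})$ is strictly join irreducible in $\Lambda(\vv V^{\d_n})$ whenever $m-1 \mathrel{|} n-1$, and its proof establishes that $\Lambda(\vv V^{\d_n})$ is finite in this case (via finiteness of $\Phi_\vv V^{\d_n}$). So the real content is translating strict join irreducibility of $\vv V^{\d_m}$ into splitting of the generator $\alg A^{\d_m}$.

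First I would observe that $\Lambda(\vv V^{\d_n})$ is distributive: since $\vv V^{\d_n}$ is a variety of residuated lattices it is congruence distributive (having lattice terms), and it is classical (via J\'onsson's Lemma) that the subvariety lattice of a congruence distributive variety is distributive. Combined with the finiteness already noted, $\Lambda(\vv V^{\d_n})$ is a finite distributive lattice.

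Next I would invoke the standard lattice-theoretic fact that in a finite distributive lattice $L$, every join-irreducible element $j$ is splitting. The reason is that in a distributive lattice $j$ is automatically join-prime, so the set $\{x \in L : j \not\le x\}$ is an order ideal closed under binary joins; by finiteness it has a maximum $b$, and then $j \not\le b$ while $L = {\downarrow}b \cup {\uparrow}j$, witnessing that $(j,b)$ is a splitting pair. Applying this to the strictly join irreducible element $\vv V^{\d_m}$ of $\Lambda(\vv V^{\d_n})$ produces a largest subvariety $\vv W_{\alg A^{\d_m}}$ of $\vv V^{\d_n}$ with $\vv V^{\d_m} \not\subseteq \vv W_{\alg A^{\d_m}}$.

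Finally I would translate back: since $\vv V^{\d_m} = {\bf V}(\alg A^{\d_m})$, the condition $\vv V^{\d_m} \not\subseteq \vv W_{\alg A^{\d_m}}$ is equivalent to $\alg A^{\d_m} \notin \vv W_{\alg A^{\d_m}}$, and $\vv W_{\alg A^{\d_m}}$ is the largest subvariety of $\vv V^{\d_n}$ not containing $\alg A^{\d_m}$. Thus for any subvariety $\vv U \subseteq \vv V^{\d_n}$, either $\alg A^{\d_m} \in \vv U$ or $\vv U \subseteq \vv W_{\alg A^{\d_m}}$, which is exactly the splitting property. There is really no serious obstacle here: the heavy lifting (strict join irreducibility and finiteness of $\Lambda(\vv V^{\d_n})$) has already been done in Proposition \ref{prop:sjirotation}, and the remaining step is the well-known equivalence, in finite distributive lattices, between strict join irreducibility and being splitting.
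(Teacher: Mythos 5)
Your argument is correct, but it is not the route the paper takes. The paper's proof is a two-line appeal to an external result: it notes that $\vv V={\bf V}(\alg A)$ is strictly join irreducible in $\Lambda(\mathsf{BH})$ by Theorem \ref{thm:sjibasic} and then quotes Lemma 3.3 of \cite{AglianoUgolini2019a}, which performs the passage from strict join irreducibility at the hoop level to the splitting property of the rotated algebras. You instead stay inside the present paper: you take from Proposition \ref{prop:sjirotation} that $\vv V^{\d_m}={\bf V}(\alg A^{\d_m})$ is strictly join irreducible in $\Lambda(\vv V^{\d_n})$ and that this lattice is finite, and then use the elementary fact that in a finite distributive lattice every join irreducible element is join prime and hence splitting, finally translating the lattice splitting pair into the algebra splitting via ${\bf V}(\alg A^{\d_m})\sse \vv U \Leftrightarrow \alg A^{\d_m}\in\vv U$. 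All the individual steps check out: distributivity of $\Lambda(\vv V^{\d_n})$ follows from congruence distributivity via J\'onsson's Lemma, and join irreducibility plus finiteness does yield a largest subvariety omitting the generator. The only caveat is that two ingredients you use, namely the identity $\vv V^{\d_m}={\bf V}(\alg A^{\d_m})$ and the finiteness of $\Lambda(\vv V^{\d_n})$ (via finiteness of $\Phi_\vv V^{\d_n}$ and Lemma \ref{lemma:rotationcompletion}), are established inside the proof of Proposition \ref{prop:sjirotation} rather than in its statement, so a careful write-up should cite them explicitly (the finiteness can be obtained by applying that proof, or the instance $m=n$ of the proposition, to the ambient variety). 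What your approach buys is self-containedness and transparency about where splitting comes from (finiteness plus primeness); what the paper's approach buys is brevity, delegating the transfer argument to the already-published lemma.
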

\begin{proof}
It follows from Theorem \ref{thm:sjibasic} that $\vv V$ is strictly join irreducible in $\mathsf{BH}$, then the result follows by applying Lemma 3.3 in \cite{AglianoUgolini2019a}.
\end{proof}

\subsection{Some examples}
We shall now give some specific applications of the theorems of this section. For details about the subvarieties mentioned here and their axiomatization, we refer the reader to \cite{AglianoUgolini2019a}.
Let us start considering the generalized rotations obtained with the term defined rotation $\delta(x) =\ell(x) = 1$. Then $\BH^{\ell_n}$ is the subvariety of $\BL$ generated by all the so-called $n$-liftings of basic hoops: ordinal sums of the kind {\bf \L}$_{n} \oplus \alg B$, for a basic hoop $\alg B$. If $t_{n}(x) \approx 1$ is the equation axiomatizing $\mathsf{V}(${\bf \L}$_{n})$, then the variety $\BH^{\ell_n}$ is axiomatized by $t_{n}(\neg\neg x) \approx 1$. Clearly, for $n = 2$ we have the variety of Stonean $\mathsf{BL}$-algebras.
Then Proposition \ref{prop:sjirotation} applies to this variety, and we can say that if $\vv V$ is a variety of basic hoops, then $\vv V^{\ell_{m}}$ is strictly join irreducible in the lattice of subvarieties of $\BH^{\ell_n}$ iff $\vv V$ is strictly join irreducible in the lattice of subvarieties of basic hoops and $m-1 \mathrel{|} n-1.$

More in general, the following characterization is a direct consequence of Proposition \ref{prop:sjirotation} and Theorem \ref{thm:sjibasic}.
\begin{corollary}
Let $\vv V$ be a subvariety of $\BH$, $\d$ a term-defined rotation and $m, n \geq 2$. $\vv V^{\d_{m}}$ is strictly join irreducible in the lattice of subvarieties of $\BH^{\d_n}$ if and only if $m-1 \mathrel{|} n-1$ and $ \vv V = \VV(\alg A)$ for some $\alg A \in \mathcal F_\vv V$ such that its Wajsberg components are either cancellative or are bounded and have finite rank.
\end{corollary}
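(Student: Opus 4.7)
The plan is to derive this characterization as a direct two-step consequence of the results just established, essentially by chaining two biconditionals. First I would apply Proposition \ref{prop:sjirotation} with the choice $\vv W = \mathsf{BH}$, so that $\vv W^{\d_n} = \mathsf{BH}^{\d_n}$. This specialization immediately yields that $\vv V^{\d_m}$ is strictly join irreducible in $\Lambda(\mathsf{BH}^{\d_n})$ if and only if $m-1 \mathrel{|} n-1$ and $\vv V$ is strictly join irreducible in $\Lambda(\mathsf{BH})$.

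Next I would substitute in the characterization of strictly join irreducible subvarieties of $\mathsf{BH}$ provided by Theorem \ref{thm:sjibasic}: $\vv V$ is strictly join irreducible in $\Lambda(\mathsf{BH})$ precisely when $\vv V = \VV(\alg A)$ for some $\alg A \in \mathcal F_\vv V$ whose Wajsberg components are either cancellative or bounded with finite rank. Plugging this into the clause obtained in the first step yields exactly the statement of the corollary.

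Since both ingredients are already in hand, I do not expect any real obstacle: the proof is essentially a one-line composition of two equivalences, with no new algebraic content to develop. The only point worth verifying carefully is the role of the divisibility condition $m-1 \mathrel{|} n-1$, which enters Proposition \ref{prop:sjirotation} first through Proposition \ref{prop:sub} (to guarantee that $\vv V^{\d_m}$ is even contained in $\mathsf{BH}^{\d_n}$, so that asking for strict join irreducibility in that lattice makes sense) and which is then carried verbatim into the final statement. No bookkeeping beyond this is required, and the forward and backward directions run symmetrically through the same two biconditionals.
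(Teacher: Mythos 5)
Your proposal is correct and matches the paper exactly: the corollary is stated there as a direct consequence of Proposition \ref{prop:sjirotation} (specialized to $\vv W = \mathsf{BH}$) combined with the characterization in Theorem \ref{thm:sjibasic}, which is precisely your two-step chaining of biconditionals. Your remark on the role of $m-1 \mathrel{|} n-1$ via Proposition \ref{prop:sub} is also consistent with how the paper handles that condition.
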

For the rest of the section we are going to focus on term defined rotation the identity map, $\delta = id$. Thus the resulting varieties will be involutive subvarieties of $\mathsf{MTL}$.

\subsubsection{Nilpotent minimum varieties}
Let us first consider involutive varieties generated by generalized disconnected rotations of G\"odel hoops, that we called in \cite{AglianoUgolini2019a} \emph{nilpotent minimum varieties}, in similarity to the specific case of nilpotent minimum algebras $\mathsf{NM} = \mathsf{GH}^{\d_{3}}$.

Every proper subvariety $\vv G_{k} = {\bf V}(\alg G_{k})$ of G\"odel hoops is strictly join irreducible. Thus $\alg G_{k}^{\d_{m}}$ is strictly join irreducible in the lattice of subvarieties of $\vv G^{\d_{n}}$ whenever $m-1 \mathrel{|} n-1$.

Moreover, all algebras $\alg G_{k}$ satisfy the hypothesis of Proposition \ref{prop:sjirotation}, thus all rotations $\alg G_{k}^{\d_{m}}$ are splitting in $G_{k}^{\d_{n}}$ whenever $m-1 \mathrel{|} n-1$.

\subsubsection{Nilpotent \L ukasiewicz varieties}
We call \emph{nilpotent \L ukasiewicz varieties} all subvarieties of $\BH^{\d_n}$ generated by disconnected $n$-rotations of Wajsberg hoops. $\mathsf{WH}^{\d_{n}}$ can be axiomatized by
\begin{equation}
(\nabla_n(x) \land \nabla_n(y)) \to (((x \imp y) \imp y) \to ((y \imp x) \imp x)) \app 1.
\end{equation}
Proper subvarieties of $\WH$ are all generated by finitely many chains \cite{AglianoPanti1999} so their lattice of subvarieties is finite. Any proper variety of Wajsberg hoops $\vv V$ is axiomatized (modulo basic hoops) by a single equation in one variable of the form $t_\vv V (x) \app 1$, and $\vv V^{\d_n}$ is axiomatized by $\neg x^n \join t_\vv V(x)\app 1$ (modulo  $\BH^{\d_n}$).

Now, the proper subvarieties of $\WH$ that are strictly join irreducible are the varieties generated by a single Wajsberg chain that is either cancellative or with finite rank, i.e., either $\alg{\L}_n$ or $\alg{\L}_n^\infty$ for some $n$. Calling $\mathsf{W}_{n}$ and $\mathsf{W}_{n}^{\infty}$ the varieties generated by, respectively, $\alg{\L}_n$ or $\alg{\L}_n^\infty$, for all $m \geq 2$, $\mathsf{W}_{n}^{\d_{m}}$ and $(\mathsf{W}_{n}^{\infty})^{\d_{m}}$ are strictly join irreducible in $\mathsf{W}^{\d_{n}}$ whenever $m-1 \mathrel{|} n-1$.

\subsubsection{Nilpotent product varieties}
We refer to the varieties $\vv C^{\d_n}$ for $n \ge 2$ as {\bf nilpotent product varieties}.  Since cancellative hoops are axiomatized relative to Wajsberg hoop by $(x \imp x^2)\imp x \app 1$ the variety $\vv C^{\d_n}$ is axiomatized by
$$
\neg x^n \imp ((x \imp x^2) \imp x) \app 1.
$$
The lattice of subvarieties can be easily described knowing that: $\vv C^{\d_n} = \VV(\alg C_\o^{\d_n})$.
For each $\vv C^{\d_n}$, the varieties  $\vv C^{\d_m}$ where $m-1 \mathrel{|} n-1$ are strictly join irreducible.

\section{Linear varieties}

A variety $\vv V$ of residuated lattices is {\bf linear} if $\Lambda(\vv V)$ is totally ordered. Clearly each subvariety of a linear variety is join irreducible; via Theorem \ref{mainwell} each subvariety is then generated by a single $\Gamma^{n}$-connected algebra. Since obviously if every subvariety of a variety is join irreducible, then the variety is linear, we have:

\begin{lemma} A variety $\vv V$ is linear if and only if each subvariety of $\vv V$  is generated by a single  $\Gamma^{n}$-connected algebra. Moreover if the order type of the chain is $\o+1$, then every subvariety is generated by a single subdirectly irreducible algebra.
\end{lemma}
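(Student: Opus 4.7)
The main content of the lemma's first equivalence is sketched in the paragraph preceding it; my plan is simply to spell it out and then handle the ``moreover'' clause. Assuming throughout, as the surrounding text does, that $\vv V$ satisfies $(G_{n,n+1})$ for some fixed $n$ (so that every subvariety inherits this quasi-equational condition and Theorem \ref{mainwell} applies uniformly): if $\Lambda(\vv V)$ is totally ordered and $\vv W \sse \vv V$, then any decomposition $\vv W = \vv W_1 \join \vv W_2$ with $\vv W_i \sse \vv W$ has $\vv W_1$ and $\vv W_2$ comparable, so one of them already equals $\vv W$; hence $\vv W$ is join irreducible, and Theorem \ref{mainwell} yields a single $\Gamma^n$-connected generator. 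Conversely, if every subvariety is generated by a $\Gamma^n$-connected algebra, Theorem \ref{mainwell} makes each subvariety join irreducible, and two incomparable $\vv W_1, \vv W_2 \sse \vv V$ would make $\vv W_1 \join \vv W_2$ a subvariety that fails to be join irreducible, a contradiction; so $\Lambda(\vv V)$ must be a chain.

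For the ``moreover'' clause, I would enumerate $\Lambda(\vv V)$ as $\vv V_0 \subset \vv V_1 \subset \dots \subset \vv V_\omega = \vv V$. Each $\vv V_k$ with $k < \omega$ is in fact \emph{strictly} join irreducible: any family of subvarieties with supremum $\vv V_k$ must contain $\vv V_k$ itself, since otherwise the supremum would be bounded above by the predecessor $\vv V_{k-1}$. Hence by Birkhoff's subdirect representation theorem each $\vv V_k$ with $k < \omega$ is generated by a single subdirectly irreducible algebra. The top $\vv V_\omega = \vv V$ is \emph{not} strictly join irreducible (it is the supremum of the ascending sequence $\{\vv V_k : k < \omega\}$, none of whose terms equals $\vv V_\omega$), but it is still join irreducible as the top of a chain, and then Corollary \ref{main2} provides the required subdirectly irreducible generator.

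The only genuine obstacle is the top of the $\omega+1$ chain: Birkhoff's theorem alone does not suffice there because $\vv V_\omega$ is a nontrivial supremum of its proper subvarieties, and one must invoke the more delicate Corollary \ref{main2}; this is precisely where the ambient hypothesis $(G_{n,n+1})$ is indispensable to the overall argument.
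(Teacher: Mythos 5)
Your proof is correct and follows essentially the same route as the paper, which states this lemma with only the preceding paragraph as justification: linearity gives join irreducibility of every subvariety and Theorem \ref{mainwell} supplies the $\Gamma^{n}$-connected generators, the converse is the same incomparability argument, and the $\omega+1$ clause is handled exactly as you do, via strict join irreducibility of the proper subvarieties (Birkhoff) together with Corollary \ref{main2} for the join irreducible top. Making the standing hypothesis $(G_{n,n+1})$ explicit is appropriate, since the paper's statement tacitly assumes it through its reliance on Theorem \ref{mainwell}.
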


If the variety is also representable, then each subvariety is generated by a single chain. The prototype of a variety satisfying this is the variety $\mathsf{GH}$ of G\"odel hoops; $\Lambda(\sf GH)$ is the chain $\vv G_1 \sse \vv G_2 \sse \dots \vv G$ and each $\vv G_n$ is generated by $\bigoplus_{i=1}^n \mathbf 2$ which is of course subdirectly irreducible. Therefore $\sf GH$ is generated by any infinite set of chain of finite index or else by a single chain of infinite index. This is no coincidence. First we need a simple lemma:

\begin{lemma}\label{finiteq} Let $\alg A$ be totally ordered residuated lattice. Then $\alg A$ is finite if and only if $\alg A \vDash \bigvee_{i<n} x_{i+1}\lr x_i \geq 1$ for some $n$.
\end{lemma}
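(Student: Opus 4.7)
The plan is to translate the equation into a simple order-theoretic statement and then argue combinatorially. First, I would unpack the meaning of the inequality in a chain. By residuation, $1 \le b \lr a$ iff $1 \cdot a \le b$ iff $a \le b$, so $x_{i+1}\lr x_i \ge 1$ is equivalent to $x_i \le x_{i+1}$. Since $\alg A$ is totally ordered, the lattice join coincides with the maximum, so $\bigvee_{i<n} x_{i+1}\lr x_i \ge 1$ holds at a particular assignment if and only if $x_i \le x_{i+1}$ for at least one $i<n$. Thus the quasi-equation $\alg A \vDash \bigvee_{i<n} x_{i+1}\lr x_i \ge 1$ is equivalent to saying that $\alg A$ contains no strictly decreasing sequence $x_0 > x_1 > \cdots > x_n$ of length $n+1$.

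With this translation in hand, both directions are essentially immediate. For the forward implication, if $\alg A$ is finite with $|A| = k$, then by pigeonhole any $k+1$ elements of $A$ must have a repetition, so no strictly decreasing sequence of length $k+1$ exists; hence the equation holds for $n = k$. For the converse, suppose the equation holds for some $n$, and assume towards contradiction that $A$ has at least $n+1$ distinct elements. Since $\alg A$ is a chain, we can list $n+1$ of them in strictly decreasing order $a_0 > a_1 > \cdots > a_n$, and setting $x_i = a_i$ makes every $x_{i+1}\lr x_i$ strictly below $1$, so their join is strictly below $1$ as well, contradicting the hypothesis. Therefore $|A| \le n$ and $\alg A$ is finite.

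There is no real obstacle here; the statement is a routine consequence of the two observations that $\lor$ is $\max$ in a chain and that $b \lr a \ge 1$ encodes $a \le b$. The only minor point worth handling carefully is to ensure the correspondence between the number of variables $x_0,\dots,x_n$ (that is, $n+1$ of them) and the length of the forbidden decreasing chain, so that the cardinality bound comes out as $|A| \le n$ rather than off by one.
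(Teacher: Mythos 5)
Your proof is correct and follows essentially the same route as the paper: residuation turns $x_{i+1}\lr x_i \ge 1$ into $x_i \le x_{i+1}$, the finite direction is the pigeonhole argument, and the converse exhibits a strictly decreasing assignment and uses that $1$ is join prime in a chain (join $=$ max). Your explicit handling of the off-by-one between the number of variables and the cardinality bound is a harmless bookkeeping difference from the paper's $|A|=n-1$ convention.
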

\begin{proof}Let $\alg A$ be finite, say $|A|=n - 1$. Therefore if $\vuc an \in A$ there must be some $i<n$ with $a_i \leq a_{i+1}$; this implies $\bigvee_{i<n} a_{i+1}\lr a_i \geq 1$ and so $\alg A$ satisfies the desired equation.

Conversely suppose that $\alg A$ is infinite; then for any $n$ we can find $\vuc an \in A$ with $a_{n} < \dots < a_1$, so that $a_{i+1}\lr a_i < 1$ for all $i<n$.  As $\alg A$ is a chain $1$ is join prime,
so $\bigvee_{i<n} a_{i+1}\lr a_i < 1$. Therefore $\alg A$ does not satisfy any of the desired equations.
\end{proof}

\begin{lemma}\label{techlemma4} Let $\vv V$ be any linear variety of representable residuated lattices.
\begin{enumerate}
\item Given any infinite chain $\alg A$, $\VV(\alg A)$ contains the variety  $\vv V_c$ generated by all chains in $\vv V$.
\item If there is an infinite chain in $\vv V$, $\vv V$ has the finite model property (FMP for short) if and only if $\vv V_{c} = \vv V(\alg A)$ for all infinite chains $\alg A \in \vv V$.
\item For any fixed $n$, all totally ordered members of $\vv V$ of cardinality $n$ are isomorphic.
\end{enumerate}
\end{lemma}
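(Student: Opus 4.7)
The plan is to handle (3) first, since it isolates the use of J\'onsson's lemma cleanly; then to derive (1) by a comparability argument based on Lemma \ref{finiteq}; and finally to read off (2) from (1) together with representability.

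For (3), I would take two chains $\alg A, \alg B$ of cardinality $n$ in $\vv V$ and use linearity of $\Lambda(\vv V)$ to assume, without loss of generality, $\VV(\alg A) \subseteq \VV(\alg B)$. The first substantive step is to observe that any finite non-trivial chain in a residuated lattice is subdirectly irreducible: in a chain the only filters are the up-sets $\uparrow a$ for idempotent elements $a \leq 1$, so the filter lattice is a finite chain, and the sub-lattice of congruence filters (those closed under iterated conjugates) is also a finite chain and hence has a unique atom whenever it is non-trivial. Since residuated lattices are congruence distributive, J\'onsson's lemma now yields $\alg A \in \HH\SU\PP_u(\alg B) = \HH\SU(\alg B)$, where the last equality uses that every ultrapower of a finite algebra is an isomorphic copy of itself. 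Writing $\alg A$ as a homomorphic image of some $\alg C \leq \alg B$, the chain of inequalities $n = |A| \leq |C| \leq |B| = n$ collapses, so $\alg C = \alg B$ and the homomorphism is an isomorphism.

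For (1), I would fix an arbitrary finite chain $\alg B \in \vv V$ of cardinality $m \geq 2$ (the trivial chain is contained in $\VV(\alg A)$ vacuously) and invoke Lemma \ref{finiteq} to obtain $\alg B \vDash \bigvee_{i < m} x_{i+1} \lr x_i \geq 1$. Linearity forces $\VV(\alg A)$ and $\VV(\alg B)$ to be comparable in $\Lambda(\vv V)$; the possibility $\VV(\alg A) \subseteq \VV(\alg B)$ would make $\alg A$ satisfy the very same equation, hence be finite by Lemma \ref{finiteq}, contradicting the hypothesis on $\alg A$. So $\VV(\alg B) \subseteq \VV(\alg A)$, which places every finite chain of $\vv V$ inside $\VV(\alg A)$ and therefore gives $\vv V_c \subseteq \VV(\alg A)$.

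Part (2) is then a short equivalence. By representability, every finite member of $\vv V$ is a subdirect product of finite chains, so $\vv V$ has the FMP if and only if $\vv V = \vv V_c$. The forward direction combines with (1) to give $\vv V_c \subseteq \VV(\alg A) \subseteq \vv V = \vv V_c$ for any infinite $\alg A$; the converse uses the hypothesis $\vv V_c = \VV(\alg A)$ to place every infinite chain inside $\vv V_c$, and since the finite chains trivially lie there, representability yields $\vv V = \vv V_c$. The most delicate point of the whole plan is the subdirect irreducibility claim invoked in (3): in the possibly non-commutative setting one must check that passing from the filter lattice to the lattice of congruence filters preserves the chain structure, but this is immediate since a subposet of a chain is still a chain.
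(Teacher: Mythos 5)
Your proof takes essentially the same route as the paper's: part (1) by comparing $\VV(\alg A)$ and $\VV(\alg B)$ via linearity and Lemma \ref{finiteq}, part (2) by using representability to reduce the FMP to generation by finite chains and then invoking (1), and part (3) by observing that finite nontrivial chains are subdirectly irreducible and applying J\'onsson's Lemma together with a cardinality count, which the paper states more tersely. The only differences are cosmetic — you treat (3) first and spell out why finite chains are subdirectly irreducible and why $\HH\SU\PP_u(\alg B)$ collapses to $\HH\SU(\alg B)$ — and, exactly like the paper's own argument, your passage from ``all finite chains lie in $\VV(\alg A)$'' to ``$\vv V_c \subseteq \VV(\alg A)$'' implicitly reads $\vv V_c$ as the variety generated by the finite chains of $\vv V$.
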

\begin{proof}  Let us prove (1) first. Let $\alg A$ be any infinite chain in $\vv V$ and let $\alg B $ be a finite chain in $\vv V$. Then since $\vv V$ is linear, either $\VV(\alg A) \sse \VV(\alg B)$ or $\VV(\alg B) \sse \VV(\alg A)$. But since $\alg B$ is finite, by Lemma \ref{finiteq} it satisfies an identity that $\alg A$ does not, thus $\VV(\alg A) \not\sse \VV(\alg B)$ and then $\VV(\alg B) \sse \VV(\alg A)$. Thus $\vv V(\alg A)$ contains all finite chains in $\vv V$, which proves (1).

We now show (2). We have already shown that $\vv V_{c} \subseteq V(\alg A)$ for all infinite chains $\alg A \in \vv V$. We recall that a variety has the finite model property iff it is generated by its finite members. So the FMP holds for $\vv V$, then $\vv V_{c} = \vv V$ and then $\vv V_{c} = \vv V(\alg A) = \vv V$. Vice versa, suppose $\vv V_{c} = \vv V(\alg A)$ for all infinite chains $\alg A \in \vv V$. If an equation fails in $\vv V$, it fails in some chain, possibly infinite, $\alg B$. But $\vv V(\alg B) = \vv V_{c}$, thus the equation must fail in a finite chain. This means that the FMP holds for $\vv V$.

Finally we prove (3). Let $\alg A, \alg B \in \vv V$ two chains having the same cardinality with $\alg A \not \cong \alg B$; as they are finite and totally ordered they are subdirectly irreducible and J\'onnson's Lemma
implies that $\alg B \notin \VV(\alg A)$ and $\alg A \notin \VV(\alg B)$. Since $\vv V$ is linear this is a contradiction and (2) holds.
\end{proof}

\begin{corollary} \label{cortech4} Let $\vv V$ be a linear variety of representable residuated lattices with the  finite model property (i.e. $\vv V$ is generated by its finite algebras).
\begin{enumerate}
\item Every infinite chain generates $\vv V$, and the only proper subvarieties of $\vv V$ are generated by a single finite chain (up to isomorphism).
\item If $\vv V$ is generated by a finite chain, then $\Lambda(\vv V)$ is finite.
\item If $\vv V$ is generated by an infinite chain, $\Lambda(\vv V)$ has order type $\o+1$.
\end{enumerate}
\end{corollary}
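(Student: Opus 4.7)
The plan is to derive all three statements from Lemma~\ref{techlemma4}, J\'onsson's Lemma, and an ultraproduct argument that produces an infinite chain whenever finite chains of unbounded cardinality are available.

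For (1), I would first note that the FMP gives $\vv V = \vv V_c$. If $\vv V$ contains an infinite chain $\alg A$, then Lemma~\ref{techlemma4}(2) yields $\vv V = \vv V_c = \VV(\alg A)$, proving that every infinite chain generates $\vv V$. For the second assertion, let $\vv W \subsetneq \vv V$ be proper. Since $\vv V$ is representable, so is $\vv W$, hence $\vv W$ is generated by its chains. Any infinite chain in $\vv W$ would be an infinite chain in $\vv V$, hence would generate $\vv V$ by what was just shown, contradicting properness of $\vv W$; so every chain of $\vv W$ is finite. By Lemma~\ref{techlemma4}(3) each finite cardinality is realised by at most one chain of $\vv V$ up to isomorphism, and by linearity the varieties they generate are totally ordered. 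The decisive step is to show that the cardinalities of chains in $\vv W$ are bounded: otherwise, picking finite chains $D_1,D_2,\dots \in \vv W$ with $|D_n|\to\infty$ and forming an ultraproduct $\alg D = \prod_n D_n/\mathcal U$ for a non-principal $\mathcal U$ on $\mathbb N$ would give an infinite chain $\alg D \in \vv W$ (since $\vv W$ is closed under $\PP_u$ and being totally ordered is first-order), contradicting the previous observation. Hence the cardinalities are bounded, and $\vv W = \VV(D_{\max})$ for the unique maximal finite chain $D_{\max}$ it contains.

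For (2), if $\vv V = \VV(C)$ with $C$ a finite chain, then J\'onsson's Lemma, applied in the congruence distributive variety $\vv V$, gives that the subdirectly irreducible members of $\vv V$ lie in $\HH\SU\PP_u(C) = \HH\SU(C)$, a finite set up to isomorphism. Since every subvariety is determined by the subdirectly irreducibles it contains, $\Lambda(\vv V)$ is finite. For (3), suppose $\vv V = \VV(\alg A)$ with $\alg A$ infinite. I would first show the set of cardinalities of finite chains of $\vv V$ is infinite: otherwise, letting $C$ be the maximal such chain, FMP and representability would give $\vv V \subseteq \VV(C)$, hence $\alg A \in \HH\SU(C)$, making $\alg A$ finite, a contradiction. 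Moreover, if $|C_n|<|C_m|$ then $\VV(C_n)\subsetneq \VV(C_m)$, for otherwise $C_m\in\HH\SU(C_n)$ would force $|C_m|\le|C_n|$. Thus the $\VV(C_n)$ form a strictly ascending infinite chain in $\Lambda(\vv V)$, and by (1) these are all the proper subvarieties of $\vv V$. Hence $\Lambda(\vv V)$ has order type $\omega+1$, with $\vv V$ sitting on top as the supremum of the $\VV(C_n)$.

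The main obstacle, and the key technical ingredient throughout, is the ultraproduct argument in (1), which bridges ``having finite chains of arbitrarily large cardinality'' with ``having a genuine infinite chain''; it relies on varieties being closed under $\PP_u$, on being a chain being first-order, and on the infinitude of the ultraproduct when the factor cardinalities are unbounded. Once this bridge is in place, (2) and (3) follow from J\'onsson's Lemma and a straightforward cofinality argument in the totally ordered lattice $\Lambda(\vv V)$.
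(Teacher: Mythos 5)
Your proposal is correct and reaches all three conclusions, but it takes a partly different route from the paper. For the second half of (1) the paper does not bound cardinalities at all: it invokes the fact, established at the start of the section (every subvariety of a linear variety is join irreducible, hence by Theorem \ref{mainwell} and Lemma \ref{prelinear} generated by a single chain), so a proper subvariety is generated by one chain, which must be finite since an infinite chain would generate all of $\vv V$. Your ultraproduct argument replaces this: you show directly that a proper subvariety $\vv W$ cannot contain finite chains of unbounded cardinality (else a non-principal ultraproduct would be an infinite chain of $\vv W$, which by the first part generates $\vv V$), and then use linearity, J\'onsson and Lemma \ref{techlemma4}(3) to identify $\vv W$ with the variety generated by its largest chain. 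This is more self-contained (it bypasses the well-connectedness machinery behind Theorem \ref{mainwell}) at the cost of a compactness argument the paper does not need; it is a legitimate alternative. In (3) you are in fact slightly more careful than the paper, which tacitly picks a representative chain of every finite cardinality, whereas you only use that infinitely many cardinalities occur.

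One justification in (3) is not licit as written: from $\vv V\sse\VV(\alg C)$ you conclude $\alg A\in\HH\SU(\alg C)$ for the \emph{infinite} chain $\alg A$. J\'onsson's Lemma only places the subdirectly irreducible members of $\VV(\alg C)$ in $\HH\SU\PP_u(\alg C)$, and an infinite chain need not be subdirectly irreducible (finite chains are, as used in the proof of Lemma \ref{techlemma4}(3), which is why your other applications of J\'onsson to the $C_n$ are fine). The intended conclusion is nevertheless immediate from Lemma \ref{finiteq}, which is how the paper argues: $\alg C$ satisfies $\bigvee_{i<n} x_{i+1}\lr x_i \ge 1$ for $n=|C|+1$, hence so does every chain of $\VV(\alg C)$, so every chain of $\VV(\alg C)$ is finite and $\alg A\notin\VV(\alg C)$. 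With that one-line repair your argument goes through completely.
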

\begin{proof}  Since $\vv V$ has the finite model property, by Lemma \ref{techlemma4}, if $\vv V$ has an infinite chain it generates the whole variety since it generates all finite chains. Therefore all the proper subvarieties must be generated by a finite chain. If $\vv V$ is generated by a finite chain $\alg A$ then by J\'onnson's Lemma any totally ordered member of $\vv V$ must be in $\HH\SU(\alg A)$; hence they are all finite and (1) holds.

If $\vv V$ is generated by a finite chain of cardinality $n$, then in $\vv V $ there are only finite chains of cardinality at most $n$ (by Lemma \ref{finiteq}). Since finite chain of equal cardinality are isomorphic (by Lemma \ref{techlemma4}(3)) $\Lambda(\vv V)$ must be finite.

Finally, if $\vv V$ is generated by an infinite chain, then the only proper subvarieties are generated by finite chains; by Lemma \ref{techlemma4}(3) we can pick a representative chain $\alg A_{n+1}$ with $n$-elements for any $n \in \mathbb N$ and $\Lambda(\vv V)$ is
$$
\VV(\alg A_0) < \VV(\alg A_1) < \dots < \VV(\alg A_{n+1}) <\dots < \vv V.
$$
Therefore (3) holds.
\end{proof}

Note that the proofs of Lemma \ref{techlemma4} and Corollary \ref{cortech4} depend basically on the fact that we can distinguish equationally finite chains of different cardinalities and finite chains from infinite chains.

If $\vv V$ is a subvariety of $\mathsf{RPsH}$ we can rephrase the previous results substituting the finiteness of the chain with the finiteness of the index. In other words:
\begin{corollary} Let $\vv V$ be a linear variety contained in $\mathsf{RPsH}$ with the finite model property.
\begin{enumerate}
\item $\vv V$ is generated by each chain of infinite index $\alg A$ (if any); hence
the only proper subvarieties of $\vv V$ are those generated by chains of finite index.
\item $\Lambda(\vv V)$ is either finite or it is a chain of type $\o +1$.
\end{enumerate}
\end{corollary}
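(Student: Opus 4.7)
The plan is to adapt the proof of Corollary \ref{cortech4}, with Lemma \ref{index} (whose equations $\lambda_n$ equationally distinguish chains of index at most $n$) replacing Lemma \ref{finiteq}, and Theorem \ref{Dvu} supplying the observation that every finite chain has finite index since it admits only finitely many Wajsberg components.

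For Part (1), I would argue as follows. Let $\alg A \in \vv V$ be a chain of infinite index and $\alg B \in \vv V$ any chain of finite index. By Lemma \ref{index}, $\alg B \models \lambda_n$ for some $n$, while $\alg A$ satisfies no $\lambda_n$; hence $\VV(\alg A)\not\subseteq\VV(\alg B)$, and linearity forces $\VV(\alg B)\subseteq\VV(\alg A)$. Since FMP together with representability means $\vv V$ is generated by finite chains (as subdirect factors of finite algebras) and such chains all have finite index, we get $\vv V \subseteq \VV(\alg A)$, hence $\vv V = \VV(\alg A)$. The ``hence'' clause is then immediate: any proper subvariety $\vv W \subsetneq \vv V$ must avoid chains of infinite index, so by representability $\vv W$ is generated by chains of finite index.

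For Part (2), I would split on whether $\vv V \models \lambda_n$ for some $n$. If it does not, a standard ultraproduct argument shows that $\vv V$ contains a chain of infinite index (take an ultraproduct, over a nonprincipal ultrafilter, of chains $\alg C_n \in \vv V$ witnessing the failure of successive $\lambda_n$; such ultraproducts of totally ordered algebras remain totally ordered and lie in $\vv V$), so Part (1) applies and $\vv V = \VV(\alg A)$ for some chain $\alg A$ of infinite index. Consider now the ascending sequence $\vv V_n := \vv V \cap \mathrm{Mod}(\lambda_n)$, whose join equals $\vv V$ (by FMP and the fact that finite chains have finite index). In the $\lambda_n$-case the sequence stabilizes; otherwise it is strictly ascending, and the goal is to show that every proper subvariety equals some $\vv V_n$, giving $\Lambda(\vv V)$ of order type $\omega+1$ in the latter case and finite in the former. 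For a proper $\vv W$, pick the least $n$ with $\vv W \models \lambda_n$; then $\vv W \subseteq \vv V_n$, and by linearity $\vv V_{n-1} \subseteq \vv W$. The crux is then to use linearity to rule out a strict containment $\vv V_{n-1} \subsetneq \vv W \subsetneq \vv V_n$.

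The main obstacle is precisely this last step. Unlike Lemma \ref{techlemma4}(3), where chains of a given cardinality are forced to be isomorphic in a linear variety, chains of a given index need not be isomorphic in a linear representable pseudohoop variety, since their Wajsberg components can vary. Hence one cannot conclude uniqueness of generators for $\vv V_n$ directly. The route I anticipate is to show, by exploiting linearity together with the ordinal sum decomposition of Theorem \ref{Dvu} and the comparability it forces on the Wajsberg components that appear in chains of $\vv V$, that all chains of index exactly $n$ in $\vv V$ generate the same subvariety; any strict intermediate $\vv W$ would then produce two chains of index $n$ lying in incomparable subvarieties, contradicting linearity.
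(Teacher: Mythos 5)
Your part (1) is correct and is exactly the intended adaptation of the previous section: Lemma \ref{index} plays the role of Lemma \ref{finiteq}, linearity gives $\VV(\alg B)\sse\VV(\alg A)$ for every chain $\alg B$ of finite index, and the FMP (plus representability, so that finite algebras are subdirect products of finite chains, which have finite index) yields $\vv V=\VV(\alg A)$ and the ``hence'' clause. The gaps are in part (2). First, in the case $\vv V\vDash\lambda_n$ you assert that $\Lambda(\vv V)$ is finite because the sequence $\vv V_m=\vv V\cap\op{Mod}(\lambda_m)$ stabilizes, but stabilization gives nothing: proper subvarieties need not be of the form $\vv V_m$. Take $\vv V=\vv W_4=\VV(\alg{\L}_4)$, a linear variety with the FMP satisfying $\lambda_1$: here every $\vv V_m$ equals $\vv V$, while $\vv W_1\subsetneq\vv W_2\subsetneq\vv W_4$ are proper subvarieties, and finiteness of $\Lambda(\vv W_4)$ comes from J\'onsson/cardinality arguments, not from the $\lambda$'s. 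The same example refutes the claim on which your $\o+1$ case rests, namely that in a linear variety all chains of index exactly $n$ generate the same subvariety: $\alg{\L}_2$ and $\alg{\L}_4$ both have index $1$ and generate distinct subvarieties of $\vv W_4$; so the step you yourself flag as the crux cannot be carried out in the form you anticipate. You also use without proof that a proper $\vv W$ satisfies some $\lambda_n$; this is true but needs an argument: if $\vv W$ contained chains of unbounded index, then for any finite chain $\alg B\in\vv V$ with $|B|=m$ (so $\alg B\vDash\lambda_{m-1}$) and any $\alg C\in\vv W$ of index $>m-1$ we get $\VV(\alg C)\not\sse\VV(\alg B)$, hence by linearity $\VV(\alg B)\sse\VV(\alg C)\sse\vv W$, and the FMP forces $\vv W=\vv V$.

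The route the paper intends for (2) avoids indices altogether: $\vv V$ is in particular a linear variety of representable residuated lattices with the FMP, so Lemma \ref{techlemma4} and Corollary \ref{cortech4} apply verbatim. Every infinite chain generates $\vv V$, so a proper subvariety $\vv W$ contains only finite chains; their cardinalities are bounded (otherwise, comparing an arbitrary finite chain $\alg B\in\vv V$ with a larger finite chain of $\vv W$ via Lemma \ref{finiteq} and linearity, $\vv W$ would contain all finite chains of $\vv V$ and, by the FMP, equal $\vv V$), and by Lemma \ref{techlemma4}(3) finite chains of equal cardinality are isomorphic, so $\vv W$ is generated by a single finite chain, unique up to isomorphism. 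Since these generators are finite chains, hence subdirectly irreducible, J\'onsson's Lemma shows that the map sending a proper subvariety to the cardinality of its generating chain is an order embedding of the chain of proper subvarieties into $\mathbb N$; thus the proper subvarieties have order type at most $\o$ and $\Lambda(\vv V)$ is finite or of type $\o+1$. In short: the index argument is needed (and used correctly by you) only for part (1); part (2) should be obtained from the cardinality-based results of the preceding section rather than from the $\lambda_n$-filtration.
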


Now we can use the classification of chains of finite index in $\mathsf{BH}$ to describe all linear varieties of basic hoops.  First we observe that the linear varieties of Wajsberg hoops are easy to classify simply by inspection
using the description of $\Lambda(\sf WH)$ in \cite{AglianoPanti1999}. We recall that we are denoting by $\vv W_n$ the variety generated the Wajsberg hoop with $n+1$-elements, {\bf \L}$_{n}$.

\begin{theorem}\label{linearwa}
 The only linear varieties of Wajsberg hoops are the variety $\vv C$ of cancellative hoops and $\vv W_n$ where either $n=1$ or $n$ is a prime power.
\end{theorem}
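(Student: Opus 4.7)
The plan is to carry out a case analysis on the subvarieties of $\sf WH$ using the description of $\Lambda(\sf WH)$ from \cite{AglianoPanti1999}. Recall from that reference (and Lemma \ref{isp} in the excerpt) that up to isomorphism the subdirectly irreducible Wajsberg chains are $\alg{\L}_n$, $\alg{\L}_n^\infty$ ($n \in \mathbb{N}$), $\alg C_\omega$, and the chains of infinite rank, and every proper subvariety of $\sf WH$ is generated by a finite collection of chains from $\{\alg C_\omega\} \cup \{\alg{\L}_n, \alg{\L}_n^\infty : n \in \mathbb N\}$. The subalgebra structure we need is: $\alg{\L}_k \in \SU(\alg{\L}_n)$ iff $k \mathrel{|} n$, the radical of $\alg{\L}_n^\infty$ is isomorphic to $\alg C_\omega$ (so $\alg C_\omega \in \SU(\alg{\L}_n^\infty)$), and $\alg{\L}_k \in \SU(\alg{\L}_n^\infty)$ iff $k \mathrel{|} n$.

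I would first verify that each variety in the claimed list is linear. For $\vv C = \VV(\alg C_\omega)$, the only subdirectly irreducible algebras are (up to isomorphism) the nontrivial totally ordered cancellative hoops, all of which satisfy $\HH\SU\PP_u(\alg C_\omega) = \II\SU\PP_u(\alg C_\omega)$ by Lemma \ref{isp}(4); hence $\Lambda(\vv C) = \{\vv T, \vv C\}$ is a two-element chain. For $\vv W_n$ with $n$ a prime power $p^k$, J\'onsson's lemma gives that every subdirectly irreducible member of $\vv W_n$ lies in $\HH\SU(\alg{\L}_n) = \{\alg{\L}_d : d \mathrel{|} n\}$, so subvarieties of $\vv W_n$ are exactly the $\vv W_d$ with $d \in \{1, p, p^2, \ldots, p^k\}$; this is a chain. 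The case $n=1$ is immediate.

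Next I would rule out all other candidates. If $n$ is not a prime power, there are distinct primes $p \ne q$ with $p,q \mathrel{|} n$; then $\vv W_p$ and $\vv W_q$ are subvarieties of $\vv W_n$ but neither contains the other (since $p \nmid q$ and $q \nmid p$, and $\alg{\L}_p \notin \SU(\alg{\L}_q)$ and vice versa), so $\vv W_n$ is not linear. For $\vv W_n^\infty$ ($n \ge 1$): both $\alg C_\omega$ and $\alg{\L}_n$ embed into $\alg{\L}_n^\infty$, so $\vv C, \vv W_n \sse \vv W_n^\infty$; but $\vv C \not\sse \vv W_n$ (no infinite chain can belong to a variety generated by a finite algebra, again by Lemma \ref{finiteq} or J\'onsson), and $\vv W_n \not\sse \vv C$ (cancellativity is an equational property that fails in $\alg{\L}_n$ for $n \ge 1$), so $\vv W_n^\infty$ is not linear. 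A variety generated by two or more chains among $\{\alg{\L}_n, \alg{\L}_n^\infty, \alg C_\omega\}$ that are not related by inclusion of the generated varieties contains the corresponding incomparable subvarieties, hence is not linear; and $\sf WH$ itself contains both $\vv C$ and $\vv W_1$, which are incomparable by the same argument, so it is not linear.

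The principal point to get right will be the last step: certifying that \emph{every} remaining proper subvariety of $\sf WH$ falls under one of the above incomparability arguments. This hinges on the description from \cite{AglianoPanti1999} of all proper subvarieties as finite joins of varieties generated by algebras in $\{\alg C_\omega, \alg{\L}_n, \alg{\L}_n^\infty\}$, together with the classification of which such joins coincide — an analysis that reduces, via J\'onsson's lemma and the subalgebra relations in Lemma \ref{isp}, to the combinatorics of divisibility and of the partition into cancellative versus bounded Wajsberg chains. Once this enumeration is in hand, the theorem follows by the case analysis outlined above.
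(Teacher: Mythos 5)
Your proposal is correct and follows exactly the route the paper intends: the paper gives no explicit proof, saying only that the classification follows ``by inspection'' of $\Lambda(\mathsf{WH})$ as described in \cite{AglianoPanti1999}, and your case analysis (linearity of $\vv C$ and of $\vv W_{p^k}$, incomparability arguments ruling out non-prime-power $\vv W_n$, all $\vv W_n^\infty$, joins of incomparable generators, and $\mathsf{WH}$ itself) is precisely that inspection carried out in detail. No gaps; the only implicit ingredient you correctly rely on is the \cite{AglianoPanti1999} fact that every proper subvariety of $\mathsf{WH}$ is a finite join of varieties generated by chains among $\alg C_\omega$, $\alg{\L}_n$, $\alg{\L}_n^\infty$, which is the same fact the paper invokes.
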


Observe that if $\alg A =\bigoplus_{i \in I}\alg A_i$ is a totally ordered hoop , then $\VV(\alg A_i) \sse \VV(\alg A)$ for all $i \in I$. Hence if $\VV(\alg A)$ happens to be linear, then by Theorem \ref{linearwa}, each $\alg A_i$ must be either $\alg C_\o$ or $\alg {L}_n$ where $n$ is a prime power.

\begin{lemma} \label{omegatwo} Let $\alg A =\bigoplus_{i\in I} \alg A_i$, with $|I|>1$,  be a totally ordered hoop such that $\VV(\alg A)$ is linear. Then
\begin{enumerate}
\item either $\alg A_i = \mathbf 2$ for $i\in I$
\item or $\alg A_i = \alg C_\o$ for $i\in I$.
\end{enumerate}
\end{lemma}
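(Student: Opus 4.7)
My plan is to argue by contradiction, exploiting the routine fact (directly verified from the ordinal-sum tables) that for any $i,j \in I$ the union $A_i \cup A_j$ is closed under the hoop operations of $\alg A$, so that $\alg A_i \oplus \alg A_j$ embeds into $\alg A$ as a subalgebra; hence each $\VV(\alg A_i)$ and each $\VV(\alg A_i \oplus \alg A_j)$ is a subvariety of $\VV(\alg A)$. Combined with the observation preceding the lemma and Theorem \ref{linearwa}, every component $\alg A_i$ must already be one of $\alg C_\o$, $\mathbf 2 = \alg{\L}_1$, or $\alg{\L}_n$ for some prime power $n \ge 2$. I assume, toward contradiction, that the conclusion fails; then either (A) some component is $\alg{\L}_n$ with $n \ge 2$, or (B) no such component appears but the family mixes $\mathbf 2$'s and $\alg C_\o$'s.

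In case (A), I fix $\alg A_i = \alg{\L}_n$ and pick any other component $\alg A_j$; the two-component sum sits inside $\alg A$, so $\vv W_n = \VV(\alg{\L}_n) \sse \VV(\alg A)$, and I will produce a second subvariety incomparable with $\vv W_n$, splitting on whether $\alg A_j$ is bounded or cancellative. If $\alg A_j$ is bounded (that is, $\mathbf 2$ or some $\alg{\L}_m$), then the two component-bottoms are idempotent (since $0 \cdot 0 = 0$ in any integral residuated lattice with a bottom), and together with $1$ they span a three-element subalgebra isomorphic to $\mathbf 2 \oplus \mathbf 2 = \alg G_2$; hence $\vv G_2 \sse \VV(\alg A)$, which is incomparable with $\vv W_n$ since $\alg G_2$ is idempotent of index $2$ while $\alg{\L}_n$ is a nonidempotent Wajsberg chain of index $1$. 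If instead $\alg A_j = \alg C_\o$, then $\vv C \sse \VV(\alg A)$, and $\vv C$ is incomparable with $\vv W_n$ because $\alg{\L}_n$ is not cancellative while every chain in $\vv W_n$ is finite (by J\'onsson's Lemma every subdirectly irreducible in $\vv W_n$ is a subalgebra of $\alg{\L}_n$). For case (B), I take components $\alg A_i = \mathbf 2$ and $\alg A_j = \alg C_\o$; both embed as subalgebras of $\alg A$, so $\vv W_1 = \VV(\mathbf 2)$ and $\vv C$ are both subvarieties of $\VV(\alg A)$, incomparable because $\mathbf 2$ fails cancellativity and $\alg C_\o$ fails idempotency.

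In each alternative we thus obtain two incomparable subvarieties of $\VV(\alg A)$, contradicting the linearity of $\Lambda(\VV(\alg A))$, so the dichotomy of the lemma is forced. The only step requiring even mild care is the identification of the three-element G\"odel subalgebra inside $\alg A_i \oplus \alg A_j$ in case (A) when $\alg A_j$ is bounded, which reduces to the standard fact that the bottom of any bounded integral residuated lattice is multiplicatively idempotent; I do not anticipate a deeper obstacle, since all remaining verifications are direct consequences of the ordinal-sum definition and the invariants idempotency, cancellativity, and index.
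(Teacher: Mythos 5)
Your proof is correct and follows essentially the same route as the paper's: constrain the components via Theorem \ref{linearwa}, then exhibit two incomparable subvarieties of $\VV(\alg A)$ generated from the decomposition (the copy of $\mathbf 2 \oplus \mathbf 2$ inside two bounded components against $\vv W_n$, and $\vv C$ against $\vv W_n$ or $\vv W_1$), contradicting linearity. The only difference is cosmetic case organization (you split on whether some $\alg{\L}_n$, $n\ge 2$, occurs, while the paper first rules out $\alg C_\o$ coexisting with any bounded component), so no further comment is needed.
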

\begin{proof}
It follows from Theorem \ref{linearwa} that the only possible components are either $\alg C_{\o}$, $\alg 2$ or $\alg {L}_n$ for some prime power $n$.
Note that since $\VV(\mathbf 2)$ and $\VV(\alg C_\o)$ are incomparable varieties of Wajsberg hoops, $\mathbf 2$ and $\alg C_\o$ cannot appear both in the decomposition of $\alg A$. Since $\alg 2$ is a subalgebra of each $\alg {Wa}_n$, also $\alg {Wa}_n$ (for any $n$) and $\alg C_\o$ cannot appear both in the decomposition of $\alg A$. Thus if there is $i$ such that $\alg A_i = \alg C_\o$, then $\alg A_i = \alg C_\o$ for all $i\in I$ and (2) holds.

Suppose now that $\alg C_\o$ is not in the decomposition of $\alg A$, and suppose that for some $i$, $\alg A_i = \mathbf 2$. If there is a $j\ne i$ such that $\alg A_j = \alg {\L}_n$ where $n$ is a prime power, $n >1$, it follows immediately that $\VV(\alg {\L}_n)$ and $\VV(\mathbf 2 \oplus \mathbf 2)$ are incomparable subvarieties of $\VV(\mathbf 2 \oplus \alg{\L}_n)$ so $\VV(\alg A)$ is not linear. The same holds if there are two components of the kind $\alg {\L}_n$ and $\alg {\L}_m$, with $m, n$ (possibly equal) prime powers. Thus the only other possibility is that (1) holds.
\end{proof}

Now we can characterize all the linear varieties of basic hoops. Let us call $\Omega(\vv C)$ the variety generated by arbitrary ordinal sums of $\alg C_{\o}$.

\begin{theorem}\label{linhoops} A variety $\vv V$ of basic hoops is linear if and only if:
\begin{enumerate}
\item $\vv V$ is a linear variety of Wajsberg hoops or
\item $\vv V$ is a subvariety of $\vv G$ , or
\item $\vv V$ is a subvariety of the variety $\Omega(\vv C)$ of hoops satisfying $x \imp x^2 \le x$.
\end{enumerate}
\end{theorem}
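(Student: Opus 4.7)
The plan is to establish the biconditional in the two natural directions. For $(\Leftarrow)$, case (1) is exactly Theorem \ref{linearwa}, and case (2) follows immediately from the fact that $\Lambda(\GH)$ is the chain of order type $\o+1$ recalled in Section 7, so every subvariety of $\vv G$ is itself a chain and hence linear. For case (3), I would show in the same spirit that $\Lambda(\Omega(\vv C))$ is a chain of order type $\o+1$: its proper nontrivial members are the varieties $\VV(\bigoplus_{i=1}^n \alg C_\o)$ for $n\ge 1$, which form an ascending $\o$-chain inside $\Omega(\vv C)$, and the chain is exhaustive because any finitely generated totally ordered algebra in $\Omega(\vv C)$ is, by Theorem \ref{Dvu}, an ordinal sum of finitely many copies of $\alg C_\o$, so J\'onsson's Lemma together with Lemma \ref{finiteq} (to separate different values of $n$) gives the identification.

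For the $(\Rightarrow)$ direction, let $\vv V$ be a linear variety of basic hoops. If every totally ordered member of $\vv V$ is a Wajsberg hoop, then $\vv V \subseteq \WH$ and case (1) holds by Theorem \ref{linearwa}. Otherwise $\vv V$ contains a chain $\alg A = \bigoplus_{i\in I} \alg A_i$ with $|I|\ge 2$; since $\VV(\alg A) \subseteq \vv V$ is linear, Lemma \ref{omegatwo} forces either all $\alg A_i = \mathbf 2$ or all $\alg A_i = \alg C_\o$. In the first subcase I will show $\vv V \subseteq \vv G$, giving case (2), and in the second $\vv V \subseteq \Omega(\vv C)$, giving case (3); the two arguments are perfectly parallel.

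To carry out the first subcase, fix any subdirectly irreducible (hence totally ordered) $\alg B \in \vv V$, with decomposition $\alg B = \bigoplus_{j\in J} \alg B_j$. If $|J|\ge 2$, Lemma \ref{omegatwo} applied to the linear subvariety $\VV(\alg B) \subseteq \vv V$ makes all $\alg B_j$ equal to $\mathbf 2$ or all equal to $\alg C_\o$; the latter is excluded because then $\vv C = \VV(\alg C_\o)$ and $\VV(\mathbf 2 \oplus \mathbf 2)$ are incomparable subvarieties of $\vv V$ (the former is cancellative and hence has no nontrivial idempotents, the latter is idempotent and hence does not contain the non-idempotent $\alg C_\o$), contradicting linearity; so $\alg B \in \vv G$. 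If $|J|=1$, then $\alg B$ is a Wajsberg hoop, and if $\alg B \neq \mathbf 2$ it is either cancellative or a bounded Wajsberg chain of at least three elements; in either case $\VV(\alg B)$ and $\VV(\mathbf 2\oplus \mathbf 2)$ are incomparable (using cancellativity versus idempotency, together with the fact that $\mathbf 2\oplus\mathbf 2$ is not sum-irreducible and hence not in any variety of Wajsberg hoops by Theorem \ref{Dvu}), again contradicting linearity. Hence $\alg B = \mathbf 2 \in \vv G$, and by Birkhoff $\vv V \subseteq \vv G$. The second subcase goes through with $\mathbf 2$ and $\alg C_\o$ swapped.

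The main obstacle is making the incomparability step exhaustive: one must ensure that every nontrivial Wajsberg chain other than $\mathbf 2$ is separated from $\VV(\mathbf 2\oplus\mathbf 2)$ by a pair of equations, and symmetrically every nontrivial Wajsberg chain other than $\alg C_\o$ from $\VV(\alg C_\o\oplus\alg C_\o)$. The classification in Theorem \ref{linearwa} together with the structural result that Wajsberg chains are sum-irreducible (Theorem \ref{Dvu}) reduces the bookkeeping to checking the two equations $x^2\approx x$ and $xy \imp y\approx x$, which distinguish Gödel hoops and cancellative hoops respectively from everything else.
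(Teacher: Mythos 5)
Your right-to-left argument for case (3) has a genuine gap. You claim that any finitely generated totally ordered algebra in $\Omega(\vv C)$ is, ``by Theorem \ref{Dvu}, an ordinal sum of finitely many copies of $\alg C_\o$''. Theorem \ref{Dvu} only gives a decomposition into totally ordered \emph{Wajsberg} components; it says nothing about those components being cancellative, let alone isomorphic to $\alg C_\o$. The whole content of this direction (and the point of the paper's proof) is precisely the missing step: the defining equation $x \imp x^2 \le x$, in the presence of representability (Tanaka's/prelinearity equation), forces every Wajsberg component of a chain in $\Omega(\vv C)$ to be cancellative; and even then a cancellative component is not literally $\alg C_\o$ --- it only generates the same variety, via $\II\SU\PP_u(\alg A) = \II\SU\PP_u(\alg C_\o)$ (Lemma \ref{isp}(4)) together with the $\HH\SU\PP_u$ description of ordinal sums, so that a chain of index $n$ is only $\equiv$-equivalent to $\bigoplus_{i<n}\alg C_\o$. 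Without this, your identification of the proper subvarieties of $\Omega(\vv C)$ (and hence the linearity of $\Omega(\vv C)$, i.e.\ the reconciliation of the two descriptions of $\Omega(\vv C)$ as ``generated by ordinal sums of $\alg C_\o$'' and ``axiomatized by $x\imp x^2\le x$'') is unsupported. A smaller but real error in the same step: you invoke Lemma \ref{finiteq} to separate the varieties $\VV(\bigoplus_{i=1}^n\alg C_\o)$ for different $n$, but all these algebras are infinite chains, so Lemma \ref{finiteq} cannot distinguish them; the correct separating equations are the $\lambda_n$ of Lemma \ref{index} (this only matters for the order type $\o+1$, not for linearity itself, but the citation as written does not do what you want).

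Your left-to-right direction is essentially correct and takes a mildly different route from the paper: the paper uses join irreducibility (Theorem \ref{mainwell} with Lemma \ref{prelinear}) to get that a linear $\vv V$ not inside $\mathsf{WH}$ is generated by a \emph{single} chain of index $>1$ and applies Lemma \ref{omegatwo} once, whereas you apply Lemma \ref{omegatwo} and ad hoc incomparability arguments to every subdirectly irreducible member and conclude by Birkhoff. That works, and it avoids the join-irreducibility machinery, at the cost of the extra bookkeeping with the separating identities $x^2\approx x$, $xy\imp y\approx x$ and the failure of the Wajsberg identity in $\mathbf 2\oplus\mathbf 2$ and $\alg C_\o\oplus\alg C_\o$, which you handle adequately. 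The proposal would be complete once the cancellativity-of-components argument for $\Omega(\vv C)$ is supplied.
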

\begin{proof}  We have already seen that  $\vv G$ is linear.  For the variety $\Omega(\vv C)$ we simply note that any chain of finite index has only cancellative hoops as components, since the equation   $x \imp x^2 \le x$, in conjunction with Tanaka's equation, implies cancellativity. Therefore, modulo $\equiv$, any chain of index $n$ is of the form $\bigoplus_{i<n} \alg C_\o$; it is obvious now that $\Omega(\vv C)$ is linear and generated by the chain $\bigoplus_{n\in \o} \alg C_\o$.

Conversely assume that $\vv V$ is a linear variety of basic hoops that does not consists entirely of Wajsberg hoops. Then $\vv V$ is generated by a chain $\alg A$ of index $>1$ (possibly infinite).  Hence by Lemma \ref{omegatwo} the components of $\alg A'$ are all equal to $\mathbf 2$ or to $\alg C_\o$ and the conclusion follows.
\end{proof}

Moreover, similarly to the case of Wajsberg hoops the varieties of Wajsberg algebras that are linear can be found by simply inspecting the description of the lattice of subvarieties in \cite{DiNolaLettieri1999}.
Let us call $\mathsf{Wa}_n$ the variety generated by the $n+1$ elements MV-chain.
\begin{lemma}\label{Wajsberglin} Let $\vv V$ be a variety of Wajsberg algebras; then $\vv V$ is linear if and only if it is either $\mathsf{Wa}_n$ where either $n=1$ or $n$ is a prime power, or else it is generated by the Chang algebra  (i.e. the zero-bounded version of $\alg{\L}_2^\infty$).
\end{lemma}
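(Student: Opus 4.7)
The plan is to proceed by direct inspection of the description of $\Lambda(\mathsf{Wa})$ established by Komori and reformulated in \cite{DiNolaLettieri1999}, in direct parallel with the proof of Theorem \ref{linearwa}. The join irreducible subvarieties of $\mathsf{Wa}$ are precisely the varieties $\VV(\alg{\L}_n)$ and $\VV(\alg{\L}_n^\infty)$ for $n \geq 1$, ordered by divisibility of the indices within each of the two families as well as across them ($\VV(\alg{\L}_m) \subseteq \VV(\alg{\L}_n^\infty)$ iff $m\mid n$), while $\VV(\alg{\L}_m^\infty)$ is never contained in any $\VV(\alg{\L}_n)$.

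First I would note that $\mathsf{Wa}$ is commutative and integral, hence satisfies $(G_{0,1})$. Since every subvariety of a linear variety $\vv V$ is join irreducible, Corollary \ref{cor:mainwell} yields that $\vv V$ itself is generated by a single subdirectly irreducible Wajsberg algebra, and hence by a single $\alg{\L}_n$ or a single $\alg{\L}_n^\infty$.

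The proof then splits into two cases. If $\vv V = \mathsf{Wa}_n = \VV(\alg{\L}_n)$, then $\Lambda(\vv V)$ is isomorphic (modulo the trivial variety) to the divisor lattice of $n$, which is a chain exactly when $n=1$ or $n$ is a prime power. If instead $\vv V = \VV(\alg{\L}_n^\infty)$, I would exhibit incomparable subvarieties whenever $n \geq 2$: two distinct prime divisors $p,q$ of $n$ give $\VV(\alg{\L}_p)$ and $\VV(\alg{\L}_q)$ incomparable, and for $n = p^k$ with $p \geq 2$ the subvarieties $\VV(\alg{\L}_p)$ and the Chang variety are incomparable (the former is finite, so it cannot contain the latter; while J\'onsson's Lemma applied to Komori's description shows that $\alg{\L}_p$ is not in the Chang variety when $p > 1$, since no prime exceeding $1$ divides $1$). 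This forces $n = 1$, whence $\vv V$ is the Chang variety itself, and one verifies immediately that its lattice of subvarieties is a three-element chain consisting of the trivial variety, $\VV(\mathbf{2})$, and $\vv V$ itself.

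The main obstacle is the careful bookkeeping of incomparabilities in the Di Nola--Lettieri lattice; everything else reduces to divisor lattices of integers. Conversely, the linearity of each announced variety is immediate from the same classification, since in $\mathsf{Wa}_{p^k}$ subvarieties correspond to the chain of divisors $1, p, p^2, \ldots, p^k$, and in the Chang variety the only proper nontrivial subvariety is $\VV(\mathbf 2)$.
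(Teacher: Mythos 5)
Your proof is correct and takes exactly the paper's route: the paper offers no written argument for this lemma, asserting only that it follows by inspecting the Komori/Di Nola--Lettieri description of the subvariety lattice of Wajsberg algebras, which is precisely the inspection you carry out (and your identification of the Chang variety as the one generated by the bounded rank-one chain $\alg{\L}_1^\infty$ is the standard reading). Two small repairs: the step ``linear, hence join irreducible, hence generated by a single subdirectly irreducible chain'' uses Theorem \ref{main3} (or Corollary \ref{main2}) rather than Corollary \ref{cor:mainwell}, and since a subdirectly irreducible Wajsberg chain of infinite rank generates all of $\mathsf{Wa}$ (which is join irreducible but not linear, containing the incomparable $\VV(\alg{\L}_2)$ and $\VV(\alg{\L}_3)$), that case should be excluded explicitly before concluding that the generator may be taken to be some $\alg{\L}_n$ or $\alg{\L}_n^\infty$.
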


We can now apply our results to varieties obtained by generalized $n$-rotations. In particular, if we pick $\delta$ to be any term-defined rotation on $\mathsf{BH}$ and $n = 2$,
we can use the lattice isomorphism between $\Lambda(\mathsf{BH})$ and $\Lambda(\mathsf{BH}^{\delta_{2}})$ described after Proposition \ref{prop:sub}.
In particular the following result applies if $\delta = 1$ or $\delta= id$, that is to say, to $\mathsf{SBL}$ and the variety generated by disconnected rotations of basic hoops that we shall call $\mathsf{bIDL}$.

\begin{theorem}\label{linsbl} Let $\d$ be a term defined rotation, then a variety $\vv V$ contained in $\mathsf{BH}^{\delta_{2}}$ is linear if and only if $\vv V = \vv W^{\delta_{2}}$ where $\vv W$ is a linear variety of basic hoops.
\end{theorem}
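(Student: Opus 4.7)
The plan is to reduce the theorem directly to the lattice isomorphism between $\Lambda(\mathsf{BH})$ and $\Lambda(\mathsf{BH}^{\delta_{2}})$ that was highlighted as a special case of Proposition \ref{prop:sub}. Since $n = 2$ here, the divisibility condition $m-1 \mathrel{|} n-1 = 1$ forces $m = 2$, so Proposition \ref{prop:sub} gives that the assignment $\Phi : \vv W \mapsto \vv W^{\delta_{2}}$ is a well-defined, injective, order-preserving map from $\Lambda(\mathsf{BH})$ into $\Lambda(\mathsf{BH}^{\delta_{2}})$, and Lemma \ref{lemma:rotationjoin} ensures that $\Phi$ preserves arbitrary joins. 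Surjectivity is exactly the remark made immediately after Proposition \ref{prop:sub}, so $\Phi$ is a lattice isomorphism.

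First I would pick an arbitrary $\vv V \subseteq \mathsf{BH}^{\delta_{2}}$ and use surjectivity of $\Phi$ to write $\vv V = \vv W^{\delta_{2}}$ for a unique $\vv W \in \Lambda(\mathsf{BH})$. Next I would restrict $\Phi$ to the principal ideal below $\vv W$; since $\Phi$ is an order isomorphism, this yields a lattice isomorphism $\Lambda(\vv W) \cong \Lambda(\vv W^{\delta_{2}}) = \Lambda(\vv V)$.

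Finally, linearity is a purely lattice-theoretic property (the subvariety lattice is a chain), hence preserved under lattice isomorphisms. Therefore $\vv V$ is linear in $\Lambda(\mathsf{BH}^{\delta_{2}})$ if and only if $\vv W$ is linear in $\Lambda(\mathsf{BH})$, which establishes both directions of the equivalence.

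There is essentially no obstacle here: all the substantive work has been done in establishing that $\Phi$ is a lattice isomorphism for $n = 2$. The theorem is best read as a corollary of Proposition \ref{prop:sub} (combined with Theorem \ref{linhoops}, which classifies the linear varieties $\vv W$ that appear on the right-hand side), rather than as an independent result requiring fresh structural arguments about $\mathsf{BH}^{\delta_{2}}$.
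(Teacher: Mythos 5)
Your proposal is correct and is essentially the paper's own argument: the paper proves Theorem \ref{linsbl} precisely by invoking the lattice isomorphism $\Lambda(\mathsf{BH})\cong\Lambda(\mathsf{BH}^{\delta_{2}})$ noted after Proposition \ref{prop:sub}, under which linearity (being a purely lattice-theoretic property of the subvariety lattice) transfers back and forth. Your added details (joins via Lemma \ref{lemma:rotationjoin}, restriction to the principal ideal below $\vv W$) just make explicit what the paper leaves implicit.
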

Thus in particular  a subvariety of $\mathsf{SBL}$ is linear if and only if it is either
\begin{enumerate}
\ib a subvariety of the variety of G\"odel  algebras (bounded G\"odel hoops), or
\ib the variety generated by $\mathbf 2 \oplus \alg {Wa}_n$ where $n$ is a prime power, or
\ib the variety generated by $\mathbf 2 \oplus \alg A$, where $\alg A$ is a totally ordered member of $\Omega(\vv C)$.
\end{enumerate}
What about $\mathsf{BH}^{\d_n}$ for $n>2$? It turns out that the only linear varieties are those already described in Theorem \ref{linsbl}.

\begin{theorem}  Let $\d$ be a term defined rotation, then a variety $\vv V$ contained in $\mathsf{BH}^{\delta_{n}}$ is linear if and only if either
\begin{enumerate}
\item $\vv V=\mathsf{Wa}_n$ with $n=1$ or $n$ a prime power, or
\item $\vv V = \vv W^{\delta_{2}}$ where $\vv W$ is a linear variety of basic hoops.
\end{enumerate}
\end{theorem}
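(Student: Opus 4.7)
The plan is to prove each direction separately, relying chiefly on Proposition \ref{prop:sub}, the lattice isomorphism $\Lambda(\mathsf{BH})\cong \Lambda(\mathsf{BH}^{\delta_2})$ noted right after it, and Lemma \ref{Wajsberglin}. For the $(\Leftarrow)$ direction: if $\vv V = \mathsf{Wa}_k$ with $k=1$ or a prime power, linearity is immediate from Lemma \ref{Wajsberglin}; if $\vv V = \vv W^{\delta_2}$ for some linear $\vv W\subseteq \mathsf{BH}$, the cited lattice isomorphism restricts to $\Lambda(\vv W)\cong \Lambda(\vv V)$, transferring linearity from $\vv W$ to $\vv V$.

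For the $(\Rightarrow)$ direction, suppose $\vv V\subseteq\mathsf{BH}^{\delta_n}$ is linear, hence in particular join irreducible. Since $\mathsf{BH}^{\delta_n}$ is representable it satisfies $(G)$, so by Theorem \ref{mainwell} together with Lemma \ref{prelinear} we may write $\vv V=\VV(\alg A)$ for a single totally ordered $\alg A\in\mathsf{BH}^{\delta_n}$. Being totally ordered $\alg A$ is directly indecomposable, and by Theorem 4.8 of \cite{BMU2018} (recalled in the proof of Lemma \ref{lemma:HSPu2}) every directly indecomposable algebra in $\mathsf{BH}^{\delta_n}$ is itself a generalized rotation. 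Hence $\alg A\cong\alg B^{\delta_m}$ for some totally ordered basic hoop $\alg B$ and $m\geq 2$ with $m-1\mathrel{|}n-1$, giving $\vv V=\VV(\alg B)^{\delta_m}$.

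The crux is a case split on $m$. If $m=2$, the lattice isomorphism $\Lambda(\mathsf{BH})\cong \Lambda(\mathsf{BH}^{\delta_2})$ pulls the linearity of $\vv V$ back to $\VV(\alg B)$, yielding case (2) with $\vv W=\VV(\alg B)$. If $m>2$, I would consider the two subvarieties $\mathsf{Wa}_{m-1}=\mathbf{1}^{\delta_m}$ and $\VV(\alg B)^{\delta_2}$ of $\vv V$; both inclusions are immediate from Proposition \ref{prop:sub}, since $\mathbf 1\subseteq\VV(\alg B)$ with $m-1\mathrel{|}m-1$ and $\VV(\alg B)\subseteq\VV(\alg B)$ with $1\mathrel{|}m-1$. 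Linearity of $\vv V$ forces these two to be comparable, but Proposition \ref{prop:sub} shows that $\mathsf{Wa}_{m-1}\subseteq\VV(\alg B)^{\delta_2}$ would require $m-1\mathrel{|}1$, contradicting $m>2$, while $\VV(\alg B)^{\delta_2}\subseteq\mathsf{Wa}_{m-1}$ would require $\VV(\alg B)\subseteq\mathbf 1$, i.e.\ $\alg B$ trivial. So $\alg B$ must be trivial and $\vv V=\mathsf{Wa}_{m-1}$; Lemma \ref{Wajsberglin} then forces $m-1=1$ or $m-1$ a prime power (the Chang variety is excluded since $\mathsf{Wa}_{m-1}$ is generated by the finite chain $\alg{\L}_{m-1}$), giving case (1). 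The main obstacle is precisely this incomparability argument for $m>2$, but once Proposition \ref{prop:sub} is in hand it reduces cleanly to a divisibility check on the exponents of the rotations.
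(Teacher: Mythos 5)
Your proposal is correct and follows essentially the same route as the paper: reduce to a single totally ordered (hence directly indecomposable) generator, which must be a generalized rotation $\alg B^{\delta_{m}}$ with $m-1\mid n-1$, split on $m$, use incomparability of the MV-skeleton variety and the $2$-rotation subvariety to force either $m=2$ or $\alg B$ trivial, and conclude via Theorem \ref{linsbl} and Lemma \ref{Wajsberglin}. The only differences are cosmetic: the paper sets aside the MV-algebra case at the start and simply asserts the incomparability of the subvarieties generated by the subalgebras $\alg{\L}_{m-1}$ and $\alg B^{\delta_{2}}$, whereas you fold the MV case into the $m>2$ branch and justify the incomparability through Proposition \ref{prop:sub} (implicitly using $\VV(\alg B^{\delta_{m}})=\VV(\alg B)^{\delta_{m}}$, which is available from Lemma \ref{lemma:HSProtation} and the paper's description of subdirectly irreducibles in rotation varieties).
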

\begin{proof}
The right-to-left direction is easy to see, and is a consequence of Theorem \ref{linsbl} and Proposition \ref{prop:sub}. Suppose now that $\vv V$ contained in $\mathsf{BH}^{\delta_{n}}$ is linear, and it is not a linear variety of MV-algebras. Since it is linear it is also join irreducible, which means it is generated by a single subdirectly irreducible algebra (Theorem \ref{main3}), say $\alg A^{\delta_{m}} \neq \alg \L_{m-1}$, with $m-1 \mathrel{|} n-1$. If $m > 2$, then both the MV-skeleton ${\alg \L}_{m-1}$ and $\alg A^{\delta_{2}}$ are subalgebras of $\alg A^{\delta_{m}}$, and they generate incomparable subvarieties of $\vv V$. Thus necessarily $m = 2$. Thus the result follows from Theorem \ref{linsbl}.
\end{proof}

Finally  we show how we can get the classification of all linear varieties of $\mathsf{BL}$-algebras (already obtained in \cite{AguzzoliBianchi2019}) using our techniques.

\begin{theorem}\label{linearBL} A variety $\vv V$ of $\mathsf{BL}$-algebras is linear if and only if:
\begin{enumerate}
\item $\vv V$ is a linear variety of Wajsberg algebras, or
\item $\vv V$ is a linear variety contained in $\mathsf{SBL}$.
\end{enumerate}
\end{theorem}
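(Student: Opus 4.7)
The plan is to prove the two implications separately. The ``if'' direction will be immediate: both $\mathsf{Wa}$ (the variety of Wajsberg algebras) and $\mathsf{SBL}$ are subvarieties of $\mathsf{BL}$, so the lattice of subvarieties of any $\vv V$ contained in either is the same whether computed inside $\mathsf{BL}$ or inside the smaller ambient variety, and the hypothesis that it is a chain transfers without further work.

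For the converse, suppose $\vv V$ is a linear variety of BL-algebras. Linearity implies join irreducibility, and since $\mathsf{BL}$ is commutative it is normal and hence satisfies $(G)$, so Corollary~\ref{main2} provides a subdirectly irreducible $\alg A$ with $\vv V=\VV(\alg A)$. Such an $\alg A$ is a BL-chain, and by Theorem~\ref{Dvu} (specialised to the commutative setting of \cite{AglianoMontagna2003}) it decomposes as
\[
\alg A=\alg A_1\oplus\alg A_2\oplus\dots\oplus\alg A_n,
\]
where $\alg A_1$ is the bounded Wajsberg hoop carrying the bottom $0$ (equivalently, a Wajsberg algebra) and each $\alg A_i$, $i>1$, is a Wajsberg hoop. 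If $n=1$ then $\alg A$ is itself a Wajsberg algebra, so $\vv V\subseteq\mathsf{Wa}$ and we land in case~(1).

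The remaining case $n\ge 2$ will be handled by showing that $\alg A_1\cong\mathbf 2$, whence $\alg A\in\mathsf{SBL}$ and case~(2) holds. Assuming for contradiction that $|\alg A_1|\ge 3$, I will exhibit two incomparable subvarieties of $\vv V$. First, $\alg A_1$ is a subalgebra of $\alg A$ as its bottom ordinal summand, so $\VV(\alg A_1)\subseteq\vv V$; being generated by an MV-chain it lies in $\mathsf{Wa}$ and satisfies $\neg\neg x\app x$. Second, a direct check on the mixed-pair cases of the ordinal sum formulas shows that $\{0\}\cup A_2$ is closed under all BL-operations and, as a BL-chain, is isomorphic to $\mathbf 2\oplus\alg A_2$; hence $\VV(\mathbf 2\oplus\alg A_2)\subseteq\vv V$, and since its generator has $\mathbf 2$ as first Wajsberg component it is pseudocomplemented, so this subvariety lies in $\mathsf{SBL}$. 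Incomparability will then follow from two observations: picking any $b\in A_2\setminus\{1\}$, the ordinal-sum formulas give $\neg b = b\to 0 = 0$ and so $\neg\neg b = 1\neq b$, showing $\mathbf 2\oplus\alg A_2\notin\mathsf{Wa}\supseteq\VV(\alg A_1)$; conversely, an MV-chain of size $\ge 3$ always has some $c$ with $0<c<1$ and $\neg c>0$, whence $c\meet\neg c\neq 0$, so $\alg A_1$ is not pseudocomplemented and therefore $\alg A_1\notin\mathsf{SBL}\supseteq\VV(\mathbf 2\oplus\alg A_2)$. This contradicts the linearity of $\vv V$.

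The only genuinely delicate step is verifying that $\{0\}\cup A_2$ is a subalgebra of $\alg A$ and recovers $\mathbf 2\oplus\alg A_2$, but this is routine from the definition of ordinal sum applied to the mixed pairs $0\to b$ and $b\to 0$ with $b\in A_2$; the rest of the argument reduces to the well-known facts that $\mathsf{Wa}$ is the subvariety of $\mathsf{BL}$ where $\neg\neg x\app x$ holds and $\mathsf{SBL}$ is its pseudocomplemented subvariety.
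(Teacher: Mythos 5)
Your proposal is correct and follows essentially the same route as the paper: generate the linear variety by a single chain, decompose it into Wajsberg components, and if the bottom component is not $\mathbf 2$ exhibit the incomparable subvarieties generated by that bottom component and by $\mathbf 2$ summed with another component (the paper argues incomparability via $\HH\SU\PP_u$ and J\'onsson's Lemma, you via the equations $\neg\neg x\app x$ versus $x\meet\neg x\app 0$ --- a negligible difference). The only slip is writing the decomposition as a finite sum with a designated second summand: a subdirectly irreducible $\mathsf{BL}$-chain may have infinite index and need not have a ``second'' component, but your subalgebra verification of $\{0\}\cup A_k\cong\mathbf 2\oplus\alg A_k$ works verbatim for any non-bottom component $k$, exactly as in the paper's choice of $k\ne 0$, so the fix is purely cosmetic.
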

\begin{proof}   Let $\alg A = \bigoplus_{i \in I}\alg A_i$ be a $\mathsf{BL}$-algebra such that $\VV(\alg A)$ is linear; if $\alg A$ is not a Wajsberg algebra, then $|I|>1$. Take $k \in I$, $k \ne 0$; then $\VV(\alg A_0 \oplus \alg A_k)$ must be a linear subvariety of $\VV(\alg A)$.  If  $\alg A_0 \ne \mathbf 2$ then $\alg A_0 \notin \HH\SU\PP_u(\mathbf 2)$,  hence $\mathbf 2 \oplus \alg A_k$ and $\alg A_0$ generate incomparable subvarieties of $\VV(\alg A_0 \oplus \alg A_k)$. But this is a contradiction so $\alg A_0 = \mathbf 2$ and $\alg A \in \mathsf{SBL}$.
\end{proof}
\section{Conclusions}
We have investigated in a very general setting the properties of being, respectively, join irreducible and strictly join irreducible in subvarieties of residuated lattices. Moreover, we have applied the results found to representable varieties, and obtained a precise characterization of (strictly) join irreducible varieties and linear varieties in the case of basic hoops. Finally, we made use of the generalized rotation construction to lift some of these results to subvarieties of $\mathsf{MTL}$-algebras.

As for future work, it is worth mentioning that a study of the techniques employed in  \cite{DvurecenskijHolland2007} and \cite{DvurecenskijHolland2009} could lead to an analogue characterization for varieties of representable pseudohoops.

Finally, we believe that the results and techniques developed in the first sections of this work can pave the way for many other possible applications than the ones considered here.

\section{Funding}
This work was supported by the European Union's Horizon 2020 research and innovation programme with a Marie Sk\l odowska-Curie grant [890616 to S.U.].

\providecommand{\bysame}{\leavevmode\hbox to3em{\hrulefill}\thinspace}
\providecommand{\MR}{\relax\ifhmode\unskip\space\fi MR }
\providecommand{\MRhref}[2]{%
  \href{http://www.ams.org/mathscinet-getitem?mr=#1}{#2}
}
\providecommand{\href}[2]{#2}

\end{document}